\newcommand{\Z}{ {\mathbf{Z}} } 
\newcommand{\Q}{ {\mathbf{Q}} }
\newcommand{\R}{ {\mathbf{R}} }
\renewcommand{\P}{ {\mathbf{P}} }
\newcommand{\C}{ {\mathbf{C}} }
\newcommand{\Zp}{ {\Z_p} }
\newcommand{\barQp}{ {\bar{\mathbf{Q}}_p} }
\newcommand{\cO}{ {\mathcal{O}} }
\newcommand{\cU}{{\mathcal{U}}}
\renewcommand{\d}{\mathfrak d}
\newcommand{\dd}{\mathfrak d'}
\newcommand{\Sym}{{ \mathrm{Sym}}}
\newcommand{\GL}{\mathrm{GL}}
\newcommand{\SL}{\mathrm{SL}}
\newcommand{\ord}{\mathrm{ord}}
\newcommand{\cont}{\mathrm{cont}}
\newcommand{\Hom}{{\mathrm{Hom}}}
\newcommand{\Div}{\mathrm{Div}}
\newcommand{\Symb}{{ \mathrm{Symb}}}
\newcommand{\hf}{ {\mathbf{f}} }
 \newcommand{\comm}[1]{}
\newtheorem{theorem}{Theorem}[section]
\newtheorem{definition}[theorem]{Definition}
\newtheorem{lemma}[theorem]{Lemma}
\newtheorem{remark}[theorem]{Remark}
\newtheorem{corollary}[theorem]{Corollary}
\newtheorem{proposition}[theorem]{Proposition}
\newtheorem*{theorem*}{Theorem}
\title{$p$-adic families of $\d$-th Shintani liftings}
\author{Daniele Casazza}
\address{University College Dublin. Belfield, Dublin 4, Ireland.}
\email{casazza.daniele@gmail.com}
\author{Carlos de Vera-Piquero}
\address{Universidad Politécnica de Madrid. ETSIT, Avda. Complutense 30, 28040 Madrid, Spain.}
\email{cdeverapiquero@gmail.com}
\date{2020.}
\begin{document}

\begin{abstract}
In this note we give a detailed construction of a $\Lambda$-adic $\d$-th Shintani lifting. We obtain a $\Lambda$-adic version of Kohnen's formula relating Fourier coefficients of half-integral weight modular forms and special values of twisted $L$-series. As a by-product, we derive a mild generalization of such classical formulae, and also point out a relation between Fourier coefficients of $\Lambda$-adic $\d$-th Shintani liftings and Stark--Heegner points.
\end{abstract}

\subjclass[2020]{11F33, 11F27, 11F37}

\maketitle

\setcounter{tocdepth}{1}
\tableofcontents

\section{Introduction}

In his seminal paper \cite{Shi73}, Shimura unrevealed the first instance of what is nowadays called theta (or Howe) correspondence, which was systematically studied later by Waldspurger \cite{Waldspurger80, Waldspurger91}. Shimura's work contained an in-depth study of half-integral weight modular forms, and provided Hecke-equivariant linear maps
\[
    \mathcal S_{k,N,\chi,\d}\colon S_{k+1/2}^+(N,\chi) \, \longrightarrow \, S_{2k}(N,\chi^2)
\]
from Kohnen's plus subspace of the space of half-integral weight modular forms to the space of classical modular forms of even weight. The construction depends on an auxiliary discriminant $\d\in\Z$ and different choices yield different maps.

By means of certain cycle integrals along geodesic paths on the complex upper half plane, one can define Hecke-equivariant linear maps 
\[
\theta_{k,N,\chi,\d}\colon S_{2k}(N,\chi^2) \, \longrightarrow \, S_{k+2/1}^+(N,\chi)
\]
which are adjoint to $\mathcal S_{k,N,\chi,\d}$ with respect to the Petersson product, meaning that
\[
\langle g, \theta_{k,N,\chi,\d}(f) \rangle = \langle \mathcal S_{k,N,\chi,\d}(g), f \rangle \quad \text{for all } f \in S_{2k}(N,\chi^2), g \in S_{k+1/2}^+(N,\chi).
\]
This construction was first studied by Shintani \cite{Shintani}, and subsequently extended by Kohnen and Zagier \cite{KZ81}, Kohnen \cite{Kohnen-Newforms, Kohnen-FourierCoefficients}, and Kojima--Tokuno \cite{KojimaTokuno}, among others. The maps $\theta_{k,N,\chi,\d}$ are referred to as {\em $\d$-th Shintani liftings}. We will drop $\chi$ from the notation when it is trivial.

Under certain assumptions, for example when $N$ is squarefree and $\chi$ is trivial, a theory of newforms of half-integral weight {\em à la} Atkin--Li--Miyake is available, and the $\d$-th Shimura and $\d$-th Shintani liftings can be used to establish a Hecke-equivariant isomorphism between $S_{2k}^{new}(N,\chi^2)$ and $S_{k+1/2}^{+,new}(N,\chi)$: this is the so-called {\em Shimura--Shintani correspondence}. In this direction, one of the main motivations of the work of Kohnen and Zagier was to obtain an explicit Waldspurger-type formula relating Fourier coefficients of half-integral weight modular forms and twisted $L$-values of classical modular forms. For instance, suppose that $N$ is odd and squarefree, and that $g\in S_{k+1/2}^{+,new}(N)$ and $f \in S_{2k}^{new}(N)$ are two non-zero new modular forms in Shimura--Shintani correspondence. Then, Kohnen's formula \cite[Corollary 1]{Kohnen-FourierCoefficients} asserts that for any fundamental discriminant $D$ with $(-1)^kD>0$ and such that $(\frac{D}{\ell})=w_{\ell}$ for all primes $\ell \mid D$, where $w_{\ell}$ is the eigenvalue of the Atkin--Lehner involution $W_{\ell}$ acting on $f$, one has 
\begin{equation}\label{KohnenFormula-intro}
    \frac{|a_{|D|}(g)|^2}{\langle g,g\rangle} = 2^{\nu(N)} \frac{(k-1)!}{\pi^k}|D|^{k-1/2} \frac{L(f,D,k)}{\langle f,f \rangle }.
\end{equation}
Here, $a_{|D|}(g)$ denotes the $|D|$-th Fourier coefficient of $g$, and $\nu(N)$ is the number of prime divisors of $N$. We insist that fixed a newform $f \in S_{2k}^{new}(N)$, this formula is valid for {\em any} $g \in S_{k+1/2}^{+,new}(N)$ in Shimura--Shintani correspondence with $f$, since any two such forms will lie on the same line.

One of the key ingredients in the proof of \eqref{KohnenFormula-intro} is a formula that relates directly the $|\d|$-th Fourier coefficient of the $\d$-th Shintani lifting of $f$ with the twisted special value $L(f,\d,k)$. Indeed, assuming that $(-1)^k\d > 0$ and that $(\frac{\d}{\ell})=w_{\ell}$ for all primes $\ell \mid N$, it is shown in \cite{Kohnen-FourierCoefficients} that
\begin{equation}\label{aD-coefficient}
a_{|\d|}(\theta_{k,N,\d}(f)) = (-1)^{[k/2]}2^{\nu(N)+k} |\d|^k(k-1)! \cdot \frac{L(f,\d,k)}{(2\pi i)^k \mathfrak g(\chi_{\d})},
\end{equation}
where $\mathfrak g(\chi_{\d})$ is the Gauss sum attached to the quadratic character $\chi_{\d}$. While Kohnen's formula in \eqref{KohnenFormula-intro} depends notably on having a good theory of newforms as cited above (which in particular provides `multiplicity one'), and therefore does not extend easily when dropping the assumptions that $N$ is squarefree and $\chi$ is trivial, the formula in \eqref{aD-coefficient} does generalize quite easily. We refer the reader to Kojima--Tokuno \cite{KojimaTokuno} for an extension of Kohnen's work and ideas.

The pioneering work of Shimura and Waldspurger not only motivated the above mentioned works by Kohnen and Zagier, but it has also inspired many other investigations over the course of several years. One of the most celebrated ones was a formula of Gross--Kohnen--Zagier \cite{GKZ}, relating Fourier coefficients of half-integral weight modular forms (or of Jacobi forms) to Heegner divisors. More in line with the $p$-adic flavour of our work here, Darmon--Tornar\'ia \cite{DarmonTornaria} proved a Gross--Kohnen--Zagier type formula for Stark--Heegner points attached to real quadratic fields. Their formula led to a similar relationship as in Kohnen's formula for central critical {\em derivatives}, with the role of the Fourier coefficient $a_{|D|}(g)$ being played by the first derivative of the $|D|$-th Fourier coefficient of a {\em $p$-adic family} of half-integral forms. Also in this line, the $p$-adic variation of the Gross--Kohnen--Zagier theorem, including the existence of $\Lambda$-adic families of Jacobi forms, is studied in \cite{LN19a,LN19b}.

In this paper we focus on the interpolation of the above liftings in $p$-adic families. Our main source of inspiration is the work of Stevens \cite{Stevens} on the $p$-adic interpolation of the {\em first} (i.e. $\d=1$) Shintani lifting, complementing Hida \cite{Hid95} towards the understanding of a $\Lambda$-adic Shimura--Shintani correspondence. Stevens's strategy combined a cohomological interpretation of Shintani's cycle integrals with the theory of $\Lambda$-adic modular symbols developed in \cite{GS93}. 
 
The primary goal of this note is to describe a $\Lambda$-adic $\d$-th Shintani lifting interpolating $p$-adically the classical $\d$-th Shintani lifting when $\d>1$. The main novelty of our approach is that we can derive a $\Lambda$-adic version of Kohnen's formula stated in \eqref{aD-coefficient}. This formula, which will be further described below, relates the $|\d|$-th Fourier coefficient of the $\Lambda$-adic $\d$-th Shintani lifting to a suitable $p$-adic $L$-function interpolating twisted central $L$-values of $p$-ordinary eigenforms (built from the two-variable $p$-adic $L$-function of Greenberg--Stevens). The reader may interpret this formula as a cuspidal analogue of the well-known relation between the $0$-th Fourier coefficient of a $\Lambda$-adic Eisenstein series and the relevant Kubota--Leopoldt $p$-adic $L$-function.

In more detail, given a Hida family $\mathbf f$ of ordinary $p$-stabilized newforms of tame level $N$ and tame character $\chi^2$, we construct a $p$-adic family of half-integral weight modular forms interpolating the $\d$-th Shintani liftings of the classical specializations of $\mathbf f$. To describe our main results, suppose that $\mathbf f$ is given by a power series $\mathbf f \in \mathcal R[[q]]$, where $\mathcal R$ is a finite flat integral extension of the Iwasawa algebra $\Lambda = \Z_p[[1+p\Z_p]] \simeq \Z_p[[T]]$. Let $\mathcal U^{\mathrm{cl}}$ denote the dense subset of classical points of $\mathcal U$, the open in the $p$-adic weight space $\mathcal W = \Hom(\mathcal R, \bar\Q_p)$ where the Hida family is defined, i.e. $\mathbf f(\kappa) \in S_{2k}(Np,\chi^2)$ is the $q$-expansion of an ordinary $p$-stabilized newform of level $Np$ and character $\chi^2$ for all $\kappa \in \mathcal U^{\mathrm{cl}}$. We show in Theorem \ref{thm:LambdaShintani-interpolation} that the $\d$-th Shintani liftings of the classical specializatons $\mathbf f(\kappa)$ are $p$-adically interpolated by a power series $\Theta_{\d}(\mathbf f) \in \widetilde{\mathcal R}[[q]]$, where $\widetilde{\mathcal R} \to \mathcal R$ is the {\em metaplectic covering} of $\mathcal R$ (cf. Section \ref{Sec:Lambda-adic}). This covering induces a natural map $\pi\colon \widetilde{\mathcal W} \to \mathcal W$ on $p$-adic weight spaces that `doubles' the weight of the cassical points.

\begin{theorem*}
    Let $\d$ be a fundamental discriminant with $\d \equiv 0 \pmod p$. There exists a power series 
    \[
        \Theta_{\d}(\mathbf f) = \sum_{m\geq 1} \mathbf a_m(\Theta_{\d}(\mathbf f)) q^m \in \widetilde{\mathcal R}[[q]]
    \]
defined on a non-empty open subset $\widetilde{\mathcal U} \subset  \widetilde{\mathcal W}$,
uniquely determined by the fact that:
\[
    \Theta_{\d}(\mathbf f)(\tilde{\kappa}) = C(k,\chi,\d)^{-1} \frac{\Omega_{\kappa}}{\Omega_{\mathbf f(\kappa)}^-}  \theta_{k,Np,\chi,\d}(\mathbf f(\kappa)), \qquad \forall \tilde{\kappa} \in \widetilde{\mathcal U}^{\mathrm{cl}},
\]
where $\kappa = \pi(\tilde{\kappa}) \in \mathcal U^{\mathrm{cl}}$. Here, $C(k,\chi,\d)$ is a constant defined in \eqref{constant:Ckchid}, and $\Omega_{\mathbf f(\kappa)}$ and $\Omega_{\kappa}$ are complex and $p$-adic periods attached to $\mathbf f(\kappa)$ and $\kappa$, respectively (cf. Section \ref{sec:padicMS}).
\end{theorem*}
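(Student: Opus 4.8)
The plan is to adapt Stevens's cohomological interpolation of the first Shintani lifting \cite{Stevens} to the $\d$-th lifting and to a general Hida family, using the Greenberg--Stevens theory of $\Lambda$-adic modular symbols \cite{GS93}. The starting point is a cohomological reformulation of the classical Fourier coefficients: for each $m \geq 1$ and each classical $\kappa \in \mathcal U^{\mathrm{cl}}$, I would rewrite the $m$-th Fourier coefficient $a_m(\theta_{k,Np,\chi,\d}(\mathbf f(\kappa)))$, a finite sum of Shintani cycle integrals $\int_{C_Q} \mathbf f(\kappa)(z)\,Q(z,1)^{k-1}\,dz$ over the relevant $\SL_2(\Z)$-classes of binary quadratic forms $Q$ of discriminant $(-1)^k m\,|\d|$, as a pairing $\langle \phi^-_{\mathbf f(\kappa)}, \xi_{m,\d}\rangle$ of the minus modular symbol attached to $\mathbf f(\kappa)$ against an explicit homology class $\xi_{m,\d}$ assembled from the geodesic cycles $C_Q$ and the weight-$(2k-2)$ integrand. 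The choice of the minus part is dictated by the sign conditions on $\d$, and this pairing is exactly the classical input underlying \eqref{aD-coefficient}.

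Next I would organize the classical modular symbols into a single $\Lambda$-adic object. Hida theory attaches to $\mathbf f$ a $\Lambda$-adic modular symbol $\Phi$ with values in a module of $p$-adic distributions over $\widetilde{\mathcal R}$, characterized by the property that its specialization at a classical $\tilde\kappa$ recovers $\phi^-_{\mathbf f(\kappa)}$ up to the canonical periods, the ratio $\Omega_\kappa / \Omega^-_{\mathbf f(\kappa)}$ measuring precisely the discrepancy between the $p$-adic normalization of $\Phi(\tilde\kappa)$ and the complex-analytic normalization of the Shintani integrals. This is the source of the period factor in the interpolation formula, and it is here that the metaplectic covering $\widetilde{\mathcal R} \to \mathcal R$ and the weight-doubling map $\pi$ enter: the distributions on which $\Phi$ takes values carry exactly the weight-$(2k-2)$ moments needed to pair against $Q(z,1)^{k-1}$ at the point $\kappa = \pi(\tilde\kappa)$.

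The core construction is then to interpolate the cycles $\xi_{m,\d}$ into measure-valued cycles whose moments vary $p$-adically with the weight. Using the hypothesis $\d \equiv 0 \pmod p$ to control the behaviour at $p$, and in particular the Gauss sum $\mathfrak g(\chi_\d)$ and the local factor at $p$ appearing in \eqref{aD-coefficient}, I would define for each $m$ a universal cycle $\tilde\xi_m$ and set $\mathbf a_m(\Theta_\d(\mathbf f)) := C(k,\chi,\d)^{-1}\,\langle \Phi, \tilde\xi_m\rangle \in \widetilde{\mathcal R}$. Assembling these coefficients produces the power series $\Theta_\d(\mathbf f)$, defined on the nonempty open $\widetilde{\mathcal U}$ where the Hida family is étale and the $p$-adic periods are nonvanishing. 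The interpolation property then follows by unwinding definitions: specializing at $\tilde\kappa$ sends $\Phi$ to $\phi^-_{\mathbf f(\kappa)}$ and $\tilde\xi_m$ to $\xi_{m,\d}$, reproducing the classical pairing coefficient by coefficient up to the period ratio and the constant $C(k,\chi,\d)$. Uniqueness is immediate, since $\widetilde{\mathcal U}^{\mathrm{cl}}$ is dense in $\widetilde{\mathcal U}$ and a nonzero element of $\widetilde{\mathcal R}$ cannot vanish at all classical points.

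The main obstacle I expect is precisely the construction of the measure-valued cycles $\tilde\xi_m$: one must verify that the families of binary quadratic forms of varying discriminant, together with their geodesic cycles and polynomial integrands, genuinely organize into $p$-adic measures whose moments interpolate the classical integrands analytically in the weight variable, and that the resulting pairing against $\Phi$ lands in $\widetilde{\mathcal R}$ rather than merely in its fraction field. This requires matching the cohomological description of Shintani's integrals with the distribution-valued formalism of \cite{GS93} so that the classical formula \eqref{aD-coefficient} is recovered at every classical specialization, and it is here that the delicate bookkeeping of periods, Gauss sums, and the normalizing constant $C(k,\chi,\d)$ must be carried through.
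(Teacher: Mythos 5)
Your overall strategy is the paper's: reinterpret the Shintani cycle integrals cohomologically as pairings $\chi_0(Q)\langle\varphi^-_{\mathbf f(\kappa)}(D_Q),Q^{k-1}\rangle$ of the minus modular symbol against the boundary divisors of the geodesics (Section \ref{sec:cohomological}), feed in the Greenberg--Stevens $\Lambda$-adic modular symbol $\Phi_{\mathbf f}$ of Theorem \ref{thm:Lambda-adic-MS} (which is the source of the period ratio $\Omega_{\kappa}/\Omega^-_{\mathbf f(\kappa)}$, exactly as you say), and let the weight-doubling of the metaplectic cover absorb the passage from weight $2k-2$ coefficients to the half-integral weight $k+1/2$. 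The uniqueness argument by density of classical points is also the paper's. The genuine gap is that the step you defer as ``the main obstacle'' is the entire content of the construction, namely Lemma \ref{lemma:JQ}. The paper does not interpolate the cycles $\xi_{m,\d}$ as measure-valued objects; it works form by form: for a fixed $Q=[aNp,b,c]$ of discriminant divisible by $p$ it pushes the distribution $\Phi_{\mathbf f}(D_Q)\in\mathbf D_{\mathbf f}$ forward along $(x,y)\mapsto Q(x,y)$. The hypothesis $\d\equiv 0\pmod p$ forces $p\mid b$, so on the region $\Z_p\times\Z_p^{\times}$ over which the specialization maps integrate one has $Q(x,y)\equiv cy^2\pmod p$, a $p$-adic unit for the relevant forms; hence the pushforward lands in $\Z_p[[\Z_p^{\times}]]$, where the weight variable can be applied via $[x]\mapsto\omega^{r_0-1}(x)[\langle x\rangle]$, and the relation $j_Q([t]\nu)=[t^2]j_Q(\nu)$ is what forces the twisted tensor product $\widetilde{\mathcal R}_{\mathbf f}=\mathcal R_{\mathbf f}\otimes_{\Lambda,\sigma}\Lambda$. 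This --- and not the Gauss sum bookkeeping you mention --- is where $p\mid\d$ does its work.

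Two further points your plan does not engage with. First, the Teichm\"uller factor $\omega(Q(x,y))^{r_0-k}$ arising when one compares the $p$-adic moment with the classical monomial $Q(x,y)^{k-1}$ is trivial only for $k\equiv r_0\pmod{p-1}$; the interpolation therefore holds only on the branch $\widetilde{\mathcal U}^{\mathrm{cl}}(r_0)$, i.e.\ on half of the classical points above $\mathcal U^{\mathrm{cl}}$, and there are two distinct liftings $\Theta_{\d}^{r_0}(\mathbf f)$ according to the choice of $r_0$ modulo $p-1$ (Section \ref{sec:LambdaShintani}); the set $\widetilde{\mathcal U}^{\mathrm{cl}}$ in the statement must be read as this branch. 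Second, $\Phi_{\mathbf f}$ is a symbol over $\Gamma_0(N)$ while the classical lifting of $\mathbf f(\kappa)$ is computed at level $Np$ over classes in $\mathcal L_{Np}(|\d|m)/\Gamma_0(Np)$; one must check that only the $p$-primitive forms contribute (the genus character $\omega_{\d}$ annihilates the rest precisely because $p\mid\d$), and the comparison of quadratic forms at levels $N$ and $Np$ carried out in Section \ref{Sec:p-stabilizations} (Corollaries \ref{cor:nop2} and \ref{cor:p2}) is needed to identify the specializations with $\theta_{k,Np,\chi,\d}(\mathbf f(\kappa))$ for the $p$-stabilized forms $\mathbf f(\kappa)$. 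These are not mere bookkeeping: without them neither the well-definedness of the pushforward nor the matching with the classical lifting goes through.
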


We refer the reader to Section \ref{Sec:Lambda-adic} for the precise statement and a detailed proof. The construction of $\Theta_{\d}(\mathbf f)$ is based on a cohomological interpretation of the $\d$-th Shintani lifting by using modular symbols and the $p$-adic interpolation of the latter. We must point out that one can construct a priori {\em two} different $\Lambda$-adic $\d$-th Shintani liftings of a given Hida family $\mathbf f$, each of them satisfying an interpolation property on a different subset of classical points. 
The meaning of this dichotomy and the choice of $\widetilde{\mathcal U}$ are explained in Section \ref{sec:LambdaShintani}.

As a consequence of the existence of a $\Lambda$-adic $\d$-th Shintani lifting, we prove in Theorem \ref{thm:comparisonGS} a $\Lambda$-adic version of Kohnen's formula \eqref{aD-coefficient}. In the simplified case in which $\chi$ is the trivial character Theorem \ref{thm:comparisonGS} reduces to the following identity in $\widetilde{\mathcal R}$:
\[
    \mathbf a_{|\d|}(\Theta_{\d}(\mathbf f)) = \mathrm{sgn}(\d) \cdot 2^{\nu(N)} \cdot \tilde{a}_p(\mathbf f) \cdot  \tilde{\mathcal L}_p^{\mathrm{GS}}(\mathbf f, \d).
\]
Here, the $p$-adic $L$-function $\tilde{ \mathcal L}_p^{\mathrm{GS}}(\mathbf f, \d) \in \widetilde{\mathcal R}$ interpolates the values $L(\mathbf f(\kappa), \d, k)$ and it is built from a suitable restriction of the two-variable $p$-adic $L$-function studied by Greenberg--Stevens in \cite{GS93}, and $\tilde{a}_p(\mathbf f)$ is the pullback of $a_p(\mathbf f)$ along the map $\pi:\widetilde{\mathcal W} \to \mathcal W$. As a by-product, we obtain a mild generalization of the classical formula in \eqref{aD-coefficient} (see Corollary \ref{cor:KohnenFormula-Np}), and we also describe exceptional zero phenomena for the coefficients $\mathbf a_{|\d|}(\Theta_{\d}(\mathbf f))$ (see Section \ref{sec:Shadows}). Using a result of Bertolini--Darmon \cite{BD07}, in a particular exceptional zero setting the second derivative of $ \mathbf a_{|\d|}(\Theta_{\d}(\mathbf f))$ can be related to the logarithm of a (Stark--)Heegner point (cf. Corollary \ref{cor:BDderivative} below). Similar results for the first derivative were studied in \cite{DarmonTornaria} and ours may be seen as complementary. This leads us to believe that our $\Lambda$-adic $\d$-th Shintani lifting can push further the study of these applications, by providing a robust and systematic approach to a $\Lambda$-adic Shimura--Shintani correspondence.

The $\Lambda$-adic $\d$-th Shintani lifting constructed in this note is a key ingredient in the definition of a certain $p$-adic $L$-function for $\GL_2 \times \GL_3$ associated with two Hida families of ordinary cusp forms that was hinted at in \cite{PaldVP, PaldVP2}. This is studied in a forthcoming work (see \cite{CdVP}). 

We should mention a few works that are in the orbit of this article. First of all, we must say that Stevens' $\Lambda$-adic version of the {\em first} Shintani lifting was generalized by Park \cite{Park} to the non-ordinary finite slope case. In contrast to this, we stay in the ordinary case but extend Stevens' ideas to the case where $\d > 1$ is an arbitrary fundamental discriminant. Secondly, it is crucial in our construction the discussion in Section \ref{Sec:p-stabilizations} on the $\d$-th Shintani lifting of $p$-stabilized newforms, done by means of a careful study of certain sets of integral binary quadratic forms. This approach resembles the discussion in the work of Makiyama in \cite{Mak17}, who does the study when $p\nmid \d$. However, we believe there is a gap in Makiyama's computations, which can be circumvented precisely when $p$ divides $\d$ with our detailed analysis in Section \ref{sec:discussion-quadforms}. Our setting is therefore complementary, and along the course of our discussion we also give a precise relation between the $\d$-th Shintani lifting of a newform and that of its $p$-stabilization (not only for the $|\d|$-th Fourier coefficient). Finally, we should also mention the preprint \cite{Kaw} by Kawamura, who constructs certain $p$-adic families of Siegel cusp forms of arbitrary genus interpolating Duke--Imamo\={g}lu liftings of level $N=1$. We must point out that our construction provides a much more general version of the $\Lambda$-adic $\d$-th Shintani lifting than the one needed in Kawamura's discussion.

To close this introduction, let us briefly explain the organization of the article. In Section \ref{Sec:Shintani-Lifting} we set the basic definitions for integral binary quadratic forms, we describe Kojima--Tokuno's generalization of Kohnen's $\d$-th Shintani lifting, and we also explain the exact relation with special values of $L$-functions. Section \ref{Sec:p-stabilizations} is devoted to a classification result for integral binary quadratic forms from \cite{GKZ}, which we use to derive the exact relations between the liftings in level $N$ and $Np$, leading to a comparison between the Fourier coefficients of the lifting of a modular form and those of its $p$-stabilization. In Section \ref{Sec:p-adic} we recall some basics of Hida theory and of the $p$-adic interpolation of modular symbols {\em \`a la} Greenberg--Stevens. Section \ref{Sec:Lambda-adic} contains the main results and applications. In particular, the definition of $\Theta_{\d}$ is given in equation \eqref{def:LambdaShintani}, the interpolation property is proved in Theorem \ref{thm:LambdaShintani-interpolation}, and the above mentioned $\Lambda$-adic Kohnen formula is proved in Theorem \ref{thm:comparisonGS}.

\vspace{0.2cm} 

\noindent {\bf Acknowledgements.} D.C. acknowledges financial support by Spanish Ministry of Economy and Competitiveness, through the Severo Ochoa programme for Centres of Excellence in R\&D (SEV-2015-0554), and C.dVP. acknowledges financial support by a Junior Researcher Grant through the project AGAUR PDJ2012. Both authors are also grateful to ICMAT and UB for their warm hospitality, and to Prof. Dieulefait for partial support through the project MTM2015-66716-P. This project has also received funding from the European Research Council under the European Union’s Horizon 2020 research and innovation programme (grant agreement No 682152).

\vspace{0.2cm}

\subsection{Notation}\label{sec:notation}

We shall fix the following general notation throughout the entire paper. As usual $\Z$, $\Q$, $\R$, and $\C$ will denote the ring of integers, the field of rational numbers, the field of real numbers, and the field of complex numbers, respectively. If $z\in \C^{\times}$ and $x \in \C$, we define $z^x = e^{x\log z}$, where $\log z = \log|z|+i\arg(z)$ with $-\pi < \arg(z) \leq \pi$. If $\psi$ is a  Dirichlet character of conductor $c$, we write its Gauss sum as
\[
\mathfrak g(\psi) = \sum_{a \in (\Z/c\Z)^{\times}} \psi(a) e^{2\pi i a/c}.
\]
When $\psi$ is primitive, one has $|\mathfrak g(\psi)|^2 = \psi(-1)\mathfrak g(\psi)\mathfrak g(\bar{\psi}) = c$. If $\psi$, $\psi'$ are primitive Dirichlet characters of relatively prime conductors $c$, $c'$, respectively, then $\mathfrak g(\psi\psi') = \psi(c')\psi'(c) \mathfrak g(\psi)\mathfrak g(\psi')$.

For any commutative ring $R$ with unit, $\SL_2(R)$ will denote the special linear group over $R$. The group $\SL_2(\R)$ acts as usual on the complex upper half plane $\mathfrak H$ via linear fractional transformations. If $r, M \geq 1$ are integers, and $\psi$ is a Dirichlet character modulo $M$, we write $S_r(M,\psi)$ for the (complex) space of cusp forms of weight $r$, level $\Gamma_0(M)$ and character $\psi$. We define the Petersson product of two cusp forms $f, g \in S_r(M,\psi)$ by 
\[
\langle f, g \rangle = \frac{1}{i_{M}}\int_{\Gamma_0(M)\backslash \mathfrak H} f(z)\overline{g(z)}y^{r-2} dx dy, 
\]
where $z = x+iy$ and $i_M = [\SL_2(\Z):\Gamma_0(M)]$. With this normalization, the Petersson product of $f$ and $g$ does not change if we replace $M$ by a multiple of it and see $f$ and $g$ as forms of that level. 

If $N\geq 1$ is an {\em odd} integer, $k \geq 0$ is an integer, and $\chi$ is a Dirichlet character modulo $N$, we write $\tilde{\chi}$ for the Dirichlet character modulo $4N$ given by $(\frac{4\epsilon}{\cdot})\chi$, where $\epsilon = \chi(-1)$, and write $S_{k+1/2}(4N,\tilde{\chi})$ for the (complex) space of cusp forms of half-integral weight $k+1/2$, level $4N$ and character $\tilde{\chi}$, in the sense of Shimura \cite{Shi73}. Observe that $\tilde{\chi}$ is an even character by construction. If $f, g \in S_{k+1/2}(4N,\tilde{\chi})$, their Petersson product is \[
\langle f, g \rangle = \frac{1}{i_{4N}}\int_{\Gamma_0(4N)\backslash \mathfrak H} f(z)\overline{g(z)}y^{k-3/2} dx dy. 
\]

We will denote by $S_{k+1/2}^+(N,\chi)$ the subspace of $S_{k+1/2}(4N,\tilde{\chi})$ usually referred to as `Kohnen's plus space', consisting of those forms $f$ whose $q$-expansion has the form 
\[
f(z) = \sum_{\substack{n\geq 1,\\ \epsilon (-1)^k n \equiv 0,1 \, (4)}} a(n) q^n.
\]

We may recall the definition of Hecke operators acting on the space $S_{k+1/2}^+(N,\chi)$. For a prime $p \nmid N$, and $f = \sum a(n)q^n \in S_{k+1/2}^+(N,\chi)$, the action of the Hecke operator $T_{k+1/2,N,\chi}(p^2)$ is given by Theorem 1.7 of \cite{Shi73} with the following formula:
\begin{equation} \label{eqn:Tp-Shimura}
T_{k+1/2,N,\chi}(p^2)f(z) = \hspace{-15pt} \sum_{\substack{n\geq 1,\\ \epsilon(-1)^k\equiv 0,1(4)}}  \hspace{-15pt}  \Bigl(a(p^2n) + \chi(p)\left(\frac{\epsilon (-1)^k n}{p}\right)p^{k-1}a(n)+\chi(p^2)p^{2k-1}a(n/p^2)\Bigr)q^n,
\end{equation}
where one reads $a(n/p^2)= 0$ if $p^2\nmid n$. For primes $p \mid N$, one also defines operators $U(p^2)$ by setting
\[
U(p^2)f(z) =  \sum_{\substack{n\geq 1,\\ \epsilon(-1)^k\equiv 0,1(4)}} a(p^2n)q^n.
\]

\section{\texorpdfstring{$\d$}{d}-th Shintani lifting} \label{Sec:Shintani-Lifting}

The aim of this section is to review the definition of the so-called $\d$-th Shintani lifting, where $\d\in\Z$ is a fixed fundamental discriminant (i.e., either $\d=1$ or $\d$ is the discriminant of a quadratic field). This is a Hecke-equivariant linear map from integral weight modular forms to half-integral weight modular forms, studied in detail by Kohnen in \cite{Kohnen-FourierCoefficients}, building on the seminal work of Shintani \cite{Shintani}, and later generalized by others. We will briefly summarize the approach in Kojima--Tokuno \cite{KojimaTokuno}, which essentially adapts Kohnen's work for arbitrary odd level and arbitrary nebentype character.

Before entering into the proper construction of the $\d$-th Shintani lifting, we fix some notations concerning integral binary quadratic forms that will be of good use.

\subsection{Integral binary quadratic forms}

We write $\mathcal Q$ for the set of all integral binary quadratic forms 
\[
[a,b,c](X,Y) = aX^2+bXY +cY^2, \quad a, b, c \in \Z,
\]
on which $\Gamma_0(1) = \SL_2(\Z)$ acts by the rule 
\[
([a,b,c]\circ\gamma)(X,Y) = [a,b,c]((X,Y) {}^t \gamma), \quad \gamma \in \SL_2(\Z).
\]
If one identifies the quadratic form $Q = [a,b,c]$ with the symmetric matrix 
\[
A_Q = \left(\begin{array}{cc} a & b/2 \\ b/2 & c \end{array} \right),
\]
then $[a,b,c]\circ \gamma$ corresponds to the matrix ${}^t\gamma A_Q \gamma$. Given $Q = [a,b,c] \in \mathcal Q$, its discriminant is by definition $b^2-4ac=\det(2A_Q)$. It is immediate that the discriminant is invariant under the above $\Gamma_0(1)$-action. 

If $\Delta$ is a discriminant, we write $\mathcal Q(\Delta)$ for the subset of quadratic forms in $\mathcal Q$ having discriminant $\Delta$. There is an induced $\Gamma_0(1)$-action on $\mathcal Q(\Delta)$. If $\d$ is a fundamental discriminant dividing $\Delta$ and $Q =[a,b,c] \in \mathcal Q(\Delta)$, then we set 
\[
\omega_{\d}(Q) = \begin{cases}
0 & \text{if } \mathrm{gcd}(a,b,c,\d) > 1, \\
\left(\frac{\d}{r}\right) & \text{if } \mathrm{gcd}(a,b,c,\d)=1 \text{ and } Q \text{ represents } r, \, \mathrm{gcd}(r,\d)=1.
\end{cases}
\]
One can easily check that this definition does not depend on the choice of the integer $r$, when $\mathrm{gcd}(a,b,c,\d) = 1$. In addition, the value of $\omega_{\d}(Q)$ depends only on the $\Gamma_0(1)$-equivalence class of $Q$. Besides the definition, when $\Delta > 0$ one can compute $\omega_{\d}(Q)$ by using the following explicit formula (cf. \cite[p.263, Proposition 6]{Kohnen-FourierCoefficients})
\begin{equation}\label{genuschar-formula}
\omega_{\d}([a,b,c]) = \prod_{q^{\nu} || a} \left(\frac{\d/q^*}{q^{\nu}}\right)\left(\frac{q^*}{ac/q^{\nu}}\right).
\end{equation} 
Here, $q$ runs over the prime factors of $a$, $q^{\nu}||a$ means that $q^{\nu}\mid a$ and $\mathrm{gcd}(q,a/q^{\nu})=1$, and $q^* := \left(\frac{-1}{q}\right)q$.

We denote by $\mathcal Q^1(\Delta)$ the subset of {\em primitive} forms in $\mathcal Q(\Delta)$, namely those forms for which $\mathrm{gcd}(a,b,c)=1$. The induced function $\omega_{\d}\colon \mathcal Q^1(\Delta)/\Gamma_0(1) \to \{\pm 1\}$ is usually referred to as a {\em genus character}. As explained in \cite[Ch. I]{GKZ}, $\mathcal Q^1(\Delta)/\Gamma_0(1)$ has a natural group structure and $\omega_{\d}$ is a group homomorphism; and conversely, every group homomorphism $\mathcal Q^1(\Delta)/\Gamma_0(1) \to \{\pm 1\}$ is of the form $\omega_{\dd}$ for some fundamental discriminant $\dd$ dividing $\Delta$, the only relations being that $\omega_{\d} = \omega_{\dd}$ if $\Delta=\d\dd m^2$ for some natural number $m$.

If $M\geq 1$ is an integer, we also denote by $\mathcal Q_M(\Delta)$ the subset of forms $Q=[a,b,c] \in \mathcal Q(\Delta)$ such that $a \equiv 0 \pmod M$. One can easily check that the congruence subgroup $\Gamma_0(M)$ acts on $\mathcal Q_M(\Delta)$. If $t > 0$ is a divisor of $M$, then the map 
\begin{equation}\label{bijection-t}
Q = [a,b,c] \mapsto Q_t := Q \circ \left(\begin{array}{cc} t & 0 \\ 0 & 1 \end{array}\right) = [at^2, bt, c]
\end{equation}
yields a bijection $\mathcal Q_{M/t}(\Delta) \to \mathcal Q_{Mt}(\Delta t^2)$. If $\d$ is a fundamental discriminant dividing $\Delta$ as above, one has
\begin{equation}\label{genuschar-relation-t}
\omega_{\d}(Q_t) = \chi_{\d}(t^2)\omega_{\d}(Q),
\end{equation}
which equals $\omega_{\d}(Q)$ if $\mathrm{gcd}(t,\d) = 1$. When we do not want to specify the discriminant, $\mathcal Q_M$ will denote the union of all the sets $\mathcal Q_M(\Delta)$.

\subsection{The $\d$-th Shimura and Shintani liftings}

Fix through all this paragraph an odd integer $N\geq 1$ and an integer $k \geq 1$, and fix also a Dirichlet character $\chi$ modulo $N$. Let $N_0 \geq 1$ be the conductor of $\chi$, $\chi_0$ be the primitive character modulo $N_0$ associated with $\chi$, and put $N_1 = N/N_0$ and $\epsilon = \chi(-1)$. Although it is not always needed, for simplicity we assume through all our discussion that $\mathrm{gcd}(N_0,N_1) = 1$. Also, for technical reasons that will be apparent below, we make the following hypothesis.
\begin{equation}\label{hypothesisk=1}\tag{$\star$}
\text{if } k = 1, \text{ either } N \text{ is squarefree or } \chi \text{ is trivial and } N \text{ is cubefree}.
\end{equation}
Fix also a fundamental discriminant $\d$ satisfying $\mathrm{gcd}(N_0,\d)=1$. 

If $Q = [a,b,c] \in \mathcal Q$ is an integral binary quadratic form, we set $\chi_0(Q) := \chi_0(c)$. For an integer $u\geq 1$ such that $\mathrm{gcd}(N_0,u)=1$, and $\dd$ a discriminant with $\d\dd > 0$, we consider the set of integral binary quadratic forms 
\[
\mathcal Q_{N_0u}(N_0^2\d\dd) = \{ Q =[a,b,c] \in \mathcal Q: b^2-4ac = N_0^2\d\dd, \, N_0u \mid a \},
\]
as defined in the previous paragraph. Recall that there is a natural action of $\Gamma_0(N_0u)$ on this set. We consider now the subset
\[
\mathcal L_{N_0u}(N_0^2\d\dd) := \{Q = [a,b,c] \in \mathcal Q_{N_0u}(N_0^2\d\dd): \mathrm{gcd}(N_0,c)=1\},
\]
which is also invariant under  $\Gamma_0(N_0u)$. For $k\geq 2$, define a function  $f_{N_0,u}^{k,\chi_0}(z;\d,\d')$ of $z \in \mathfrak H$ by  
\begin{equation}\label{def-fkN0}
f_{N_0,u}^{k,\chi_0}(z;\d,\d') := \sum_{Q \in \mathcal L_{N_0u}(N_0^2\d\d')} \chi_0(Q)\omega_{\d}(Q)Q(z,1)^{-k}.
\end{equation}
These functions converge absolutely uniformly on compact sets, and from their definition one can easily check the following properties (cf. \cite[Section 1]{KojimaTokuno}):
\begin{itemize}
    \item[i)] $f_{N_0,u}^{k,\chi_0}(g\cdot z; \d, \dd) = \bar{\chi}_0^2(\delta)(\gamma z + \delta)^{2k} f_{N_0,u}^{k,\chi_0}(z; \d, \dd)$ for all 
    \[
    g = \left(\begin{array}{cc} \alpha & \beta \\ \gamma & \delta \end{array}\right) \in \Gamma_0(N_0u);
    \]
    \item[ii)] $f_{N_0,u}^{k,\chi_0}(-z; \d, \dd) = f_{N_0,u}^{k,\chi_0}(z; \d, \dd)$ (the map $[a,b,c]\mapsto [a,-b,c]$ gives a bijection of $\mathcal L_{N_0u}(N_0^2\d\dd)$ onto itself);
    \item[iii)] $f_{N_0,u}^{k,\chi_0}(\bar{z}; \d, \dd) = \overline{f_{N_0,u}^{k,\bar{\chi}_0}(z; \d, \dd)}$.
\end{itemize}
The functions $f_{N_0,u}^{k,\chi_0}(z; \d, \dd)$ yield cusp forms of weight $2k$, level $N_0u$, and character $\bar{\chi}_0^2$. For $k=1$, the series in \eqref{def-fkN0} is not absolutely convergent. However, one can apply ``Hecke's convergence trick'' to define $f_{N_0,u}^{k,\chi_0}(z; \d, \dd)$ in a similar manner (cf. \cite[p. 239]{Kohnen-FourierCoefficients}). In this case, hypothesis \eqref{hypothesisk=1} ensures that these functions are cusp forms as well. An explicit description of their Fourier coefficients (for $k\geq 2$) can be found in \cite[Proposition 1.2]{KojimaTokuno}.

\begin{remark}
The functions $f_{N_0,u}^{k,\chi_0}(z; \d, \dd)$ as above coincide with those denoted by $f_{k,N_0^2,u}(z; \d, \dd, \chi_0)$ in \cite{KojimaTokuno}. Indeed, the sum in equation \eqref{def-fkN0} could be taken over the sets $\mathcal Q_{N_0u}(N_0^2\d\dd)$ and even $\mathcal Q_{N_0^2u}(N_0^2\d\dd)$ remaining unchanged, due to the presence of the term $\chi_0(Q)$.
\end{remark}

Next consider the `kernel function' $\Omega_{k,N,\chi}(z,\tau; \d)$ of $(z,\tau) \in \mathfrak H \times \mathfrak H$ defined by
\[
    \Omega_{k,N,\chi}(z,\tau; \d) = i_N c_{k,\d,\chi}^{-1}  \hspace{-20pt} \sum_{\substack{m\geq 1, \\ \epsilon (-1)^k m \equiv 0,1 (4)}}
    \hspace{-15pt}
    m^{k-1/2}\left( \sum_{t \mid N_1} \mu(t)\chi_{\d}\bar{\chi}_0(t)t^{k-1}f_{N_0,N_1/t}^{k,\chi_0}(tz; \d, \epsilon(-1)^km)\right) e^{2\pi i m \tau},
\]
where 
\[
    i_N = [\Gamma_0(1): \Gamma_0(N)], 
    \quad 
    c_{k,\d,\chi} = (-1)^{[k/2]}|\d|^{-k+1/2}\pi \binom{2k-2}{k-1}2^{-3k+2} \epsilon^{k-1/2}N_0^{1-k}\frac{\mathfrak g(\chi_{\d})}{\mathfrak g(\chi_{\d}\bar{\chi}_0)}.
\]

For a fixed $\tau \in \mathfrak H$, the function $\Omega_{k,N,\chi}(\cdot , \tau; \d)$ on $\mathfrak H$ is a cusp form of weight $2k$, level $N$ and character $\bar{\chi}_0^2$ (for $k=1$, one needs again hypothesis \eqref{hypothesisk=1}), and it can be expressed in terms of certain Poincar\'e series by the identity in \cite[Theorem 2.2]{KojimaTokuno}, which is useful for computations. 

With our running assumptions on $N$, $k$, $\chi$ and $\d$, for each cusp form 
\[
    g(\tau) = \sum_{\substack{n\geq 1,\\\epsilon(-1)^k n \equiv 0,1 (4)}} c(n) e^{2\pi i n \tau} \in S_{k+1/2}^+(N,\chi)
\]
in Kohnen's plus space, one can define a function
\[
\mathcal S_{k,N,\chi,\d}(g) (z) = \sum_{n\geq 1} \left(\sum_{d\mid n} \chi_{\d}\chi(d) d^{k-1} c(n^2|\d|/d^2)\right) e^{2\pi i n z}.
\]
which satisfies the following property:
\[
\mathcal S_{k,N,\chi,\d}(g)(z) = \langle g, \Omega_{k,N,\chi}(-\bar z, \cdot ; \d)\rangle.
\]
It follows that, for a fixed $\tau$,  $z \mapsto \overline{\Omega_{k,N,\chi}(-\bar z, \tau; \d)}$ defines a cusp form of weight $2k$, level $N$, and character $\chi^2$. As a consequence, $g \mapsto \mathcal S_{k,N,\chi,\d}(g)$ yields a linear map 
\[
    \mathcal S_{k,N,\chi,\d}\colon S_{k+1/2}^+(N,\chi) \, \longrightarrow \, S_{2k}(N,\chi^2)
\]
with kernel function $\Omega_{k,N,\chi}(-\bar z, \cdot ; \d)$. In addition, this map commutes with Hecke operators (meaning that $T_{k+1/2,N,\chi}(p^2)$ corresponds to $T_{2k,N,\chi^2}(p)$ for $p\nmid N$ and $U(q^2)$ corresponds to $T_{2k,N,\chi^2}(q)$ for $q\mid N$). The linear map $\mathcal S_{k,N,\chi,\d}$ is the so-called {\em $\d$-th Shimura lifting}. 

We denote by $\theta_{k,N,\chi,\d}\colon S_{2k}(N,\chi^2) \to S_{k+1/2}^+(N,\chi)$ the adjoint map with respect to the Petersson product, meaning that for all $g \in S_{k+1/2}^+(N,\chi)$ and $f \in S_{2k}(N,\chi^2)$
\[
    \langle g, \theta_{k,N,\chi,\d}(f)\rangle = \langle \mathcal S_{k,N,\chi,\d}(g), f\rangle.
\]
This implies that for any $f \in S_{2k}(N,\chi^2)$ one has
\begin{align*}
    \theta_{k,N,\chi,\d}(f) & =
    \langle f(z), \overline{\Omega_{k,N,\chi}(-\bar z,\tau; \d)} \rangle 
    =
    i_N^{-1} \int_{\Gamma_0(N)\backslash\mathfrak H} f(z)\Omega_{k,N,\chi}(-\bar z,\tau; \d) y^{2k-2}dxdy = \\
    & = i_Nc_{k,\d,\chi}^{-1} \hspace{-15pt} \sum_{\substack{m\geq 1,\\\epsilon(-1)^km\equiv 0,1 (4)}} \hspace{-15pt} m^{k-1/2} \left(\sum_{t\mid N_1} \mu(t)\chi_{\d} \bar{\chi}_0(t) t^{k-1} \langle f, f_{N_0,N_1/t}^{k,\bar{\chi}_0}(-tz; \d, \epsilon(-1)^k m)\rangle \right) q^m,
\end{align*}
where the last equality uses the above mentioned expression of the kernel function in terms of Poincar\'e series. In particular, an explicit expression for $\theta_{k,N,\chi,\d}(f)$ can be determined by computing the Petersson products 
\[
\langle f, f_{N_0,N_1/t}^{k,\bar{\chi}_0}(-tz; \d, \epsilon(-1)^k m)\rangle,
\]
for $m\geq 1$ with $\epsilon(-1)^k m \equiv 0,1 \pmod 4$. Firstly, using property ii) listed above for the functions $f_{k,N_0,u}$, we see that 
\[
f_{N_0,N_1/t}^{k,\bar{\chi}_0}(-tz; \d, \epsilon(-1)^k m) = f_{N_0,N_1/t}^{k,\bar{\chi}_0}(tz; \d, \epsilon(-1)^k m).
\]
Secondly, using the bijection in \eqref{bijection-t} and that $\omega_{\d}(Q_t) = \chi_{\d}(t^2)\omega_{\d}(Q)$ by \eqref{genuschar-relation-t}, we deduce that
\begin{equation}\label{fkN-relation-t}
f_{N_0,N_1t}^{k,\bar{\chi}_0}(z; \d, \epsilon(-1)^k mt^2) = \chi_{\d}(t^2)f_{N_0,N_1/t}^{k,\bar{\chi}_0}(-tz; \d, \epsilon(-1)^k m)
\end{equation}
for all divisors $t$ of $N_1$. Therefore, we may rewrite the above expression for $\theta_{k,N,\chi,\d}(f)$ as
\[
    \theta_{k,N,\chi,\d}(f) = i_N c_{k,\d,\chi}^{-1}  \hspace{-15pt} \sum_{\substack{m\geq 1,\\\epsilon(-1)^km\equiv 0,1 (4)}} \hspace{-15pt} m^{k-1/2} \left(\sum_{t\mid N_1} \mu(t)\chi_{\d} \bar{\chi}_0(t) t^{k-1} \langle f, f_{N_0,N_1t}^{k,\bar{\chi}_0}(z; \d, \epsilon(-1)^k mt^2)\rangle \right) q^m.
\]
Finally, proceeding similarly as in \cite[p. 265-266]{Kohnen-FourierCoefficients}, one can check that for $t \mid N_1$
\begin{equation}\label{fkN-rkN}
\langle f, f_{N_0,N_1t}^{k,\bar{\chi}_0}(z; \d, \epsilon(-1)^k mt^2)\rangle = i_{Nt}^{-1} \pi \binom{2k-2}{k-1} 2^{-2k+2}(|\d|mt^2)^{-k+1/2}r_{k,Nt,\chi}(f; \d, \epsilon(-1)^kmt^2),
\end{equation}
where for any discriminant $\dd$ with $\d\dd>0$ we set
\begin{align*}
r_{k,Nt,\chi}(f; \d, \dd) & := \sum_{Q \in \mathcal L_{Nt}(N_0^2\d\dd)/\Gamma_0(Nt)} \omega_{\d}(Q) \chi_0(Q)\int_{C_Q} f(z) Q(z,1)^{k-1} dz.
\end{align*}
Here, for each $Q = [a,b,c]$, $C_Q$ denotes the image in $\Gamma_0(N) \backslash \mathfrak H$ of a geodesic in the upper half plane associated with $Q$. Namely, consider the semicircle $S_Q$ in the complex upper half plane defined by the equation $a|z|^2+b \mathrm{Re}(z)+c = 0$, and denote by $\omega_Q$, $\omega_Q' \in \P^1(\R)$ the pair of points 
\[
(\omega_Q, \omega_Q') := \begin{cases}
\left(\frac{b-\sqrt{\mathrm{disc}(Q)}}{2a}, \frac{b+\sqrt{\mathrm{disc}(Q)}}{2a} \right) & \text{if } a \neq 0, \\
(-c/b, i\infty) & \text{if } a = 0, b > 0, \\
(i\infty, -c/b) & \text{if } a = 0, b < 0.
\end{cases}
\]
Notice that $\omega_Q$ and $\omega_Q'$ are the endpoints of the semicircle. When $\mathrm{disc}(Q)$ is a perfect square, we let $C_Q$ be the image of $S_Q$ (oriented from $\omega_Q$ to $\omega_Q'$) in $\Gamma_0(N)\backslash\mathfrak H$. If $\mathrm{disc}(Q)$ is not a perfect square, then let $\Gamma_0(N)_Q$ be the stabilizer of $Q$ in $\Gamma_0(N)$, $\Gamma_0(N)_Q^+$ be its index two subgroup of positive trace elements, and 
\[
\gamma_Q = \begin{pmatrix} r & s \\ t & u\end{pmatrix} \in \Gamma_0(N)_Q^+
\]
be the unique generator such that $r-t\omega_Q > 1$. Then, we let $C_Q$ be the image in $\Gamma_0(N)\backslash\mathfrak H$ of the oriented geodesic path from $\gamma_Q(i\infty)$ to $i\infty$. Note that with this construction, the endpoints of $C_Q$ are always rational cusps. We will write 
\[
I_{k,\chi}(f,Q) := \chi_0(Q) \int_{C_Q} f(z) Q(z,1)^{k-1} dz,
\]
so that \begin{equation}\label{rkN-def}
r_{k,Nt,\chi}(f;\d,\dd) = \sum_{Q \in \mathcal L_{Nt}(N_0^2\d\dd)/\Gamma_0(Nt)} \omega_{\d}(Q)I_{k,\chi}(f,Q).
\end{equation}

With this, one eventually concludes that \begin{equation}\label{dShintani-withcharacter}
\theta_{k,N,\chi,\d}(f) = C(k,\chi,\d) \sum_{\substack{m\geq 1,\\\epsilon(-1)^km\equiv 0,1(4)}} \left(\sum_{t\mid N_1} \mu(t)\chi_{\d}\bar{\chi}_0(t) t^{-k-1} r_{k,Nt,\chi}(f;\d,\epsilon(-1)^k m t^2)\right) q^m,
\end{equation}
where 
\begin{equation}\label{constant:Ckchid}
    C(k,\chi,\d) := c_{k,\d,\chi}^{-1} \pi \binom{2k-2}{k-1}2^{-2k+2}|\d|^{1/2-k} = (-1)^{[k/2]} \epsilon^{k+1/2} 2^k N_0^{k-1} \frac{\mathfrak g(\chi_{\d}\bar{\chi}_0)}{\mathfrak g(\chi_{\d})}.
\end{equation}
When the character $\chi$ is trivial, we will denote this constant by $C(k,\d)=(-1)^{[k/2]}2^k$. 

\begin{remark} \label{rmk:vanishing}
    One can easily check that the quantities $r_{k,Nt,\chi}(f; \d,\dd)$ in \eqref{rkN-def} are all zero when $\epsilon(-1)^k\d < 0$, so that $\theta_{k,N,\chi,\d}(f)$ vanishes identically. Therefore we may assume that $\d$ is chosen such that $\epsilon(-1)^k\d > 0$.
\end{remark}

\begin{remark}
The expression for $\theta_{k,N,\chi,\d}$ in \cite[Theorem 3.2]{KojimaTokuno} reads slightly different, with $t^{-k}$ instead of $t^{-k-1}$, since they use a slight variation of the sum of cycle integrals $r_{k,Nt,\chi}$, considering equivalence by $\Gamma_0(N)$ instead of $\Gamma_0(Nt)$. The two sums yield the same result, and the reason for the extra factor $t^{-1}$ showing up here is that $i_{Nt} = t\cdot i_N$. For trivial character, \eqref{dShintani-withcharacter} recovers the expression in \cite[Eq. (8)]{Kohnen-FourierCoefficients}, where the constant $C(k,\mathbf 1,\d) = (-1)^{[k/2]} 2^k$ seems to be missing.
\end{remark}

When $f \in S_{2k}(N,\chi^2)$ is {\em new}, the expression in \eqref{dShintani-withcharacter} gets simplified. Indeed, equation \eqref{fkN-relation-t} shows that the forms $f_{N_0,N_1t}^{k,\bar{\chi}_0}(z;\d,\epsilon(-1)^kmt^2)$ are {\em old} when $t>1$, and hence the left hand side of \eqref{fkN-rkN} vanishes and $r_{k,Nt,\chi}(f;\d,\epsilon(-1)^kmt^2) = 0$ for $t > 1$. Therefore, if $f \in S_{2k}(N,\chi^2)$ is {\em new} one finds 
\begin{equation}\label{dShintani-withcharacter-fnew}
\theta_{k,N,\chi,\d}(f) = C(k,\chi,\d) \sum_{\substack{m\geq 1,\\\epsilon(-1)^km\equiv 0,1(4)}} r_{k,N,\chi}(f;\d,\epsilon(-1)^k m) q^m.
\end{equation}
In particular, if $m\geq 1$ is such that $\epsilon (-1)^k m \equiv 0, 1$ (mod $4$), then the $m$-th coefficient of $\theta_{k,N,\chi,\d}(f)$ is just 
\[
a_m(\theta_{k,N,\chi,\d}(f)) = C(k,\chi,\d) r_{k,N,\chi}(f;\d,\epsilon(-1)^k m).
\]

\subsection{Fourier coefficients and $L$-values}

It is well-known that Fourier coefficients of half-integral weight modular forms encode special values of (twisted) $L$-series of integral weight modular forms. We will review this phenomenon in this paragraph, assuming for simplicity of exposition that $N$ and $\d$ are relatively prime.

Indeed, suppose first that $N$ is odd and squarefree, and let $f \in \sum a_n(f) q^n \in S_{2k}(N)$ be a normalized Hecke eigenform of weight $2k$, level $\Gamma_0(N)$, and trivial nebentype character. For each prime divisor $\ell$ of $N$, let $W_{\ell}$ denote the $\ell$-th Atkin--Lehner involution, and $w_{\ell} \in \{\pm 1\}$ be the Atkin--Lehner eigenvalue of $f$ at $\ell$, so that $f | W_{\ell} = w_{\ell} f$. Let $\d$ be a fundamental discriminant with $(-1)^k\d > 0$ and such that $\mathrm{gcd}(N,\d)=1$. Let $L(f,\chi_{\d},s)$ be the complex $L$-series of $f$ twisted by the quadratic character $\chi_{\d}$. This $L$-series has holomorphic continuation to the whole complex plane, yielding a completed $L$-series $\Lambda(f,\chi_{\d},s)$ that satisfies the functional equation 
\[
\Lambda(f,\chi_{\d},s) = (-1)^k \chi_{\d}(-N)w_N \Lambda(f,\chi_{\d},2k-s),
\]
where $w_N \in \{\pm 1\}$ is the product of all the $w_{\ell}$ for $\ell \mid N$ prime. In this setting, Kohnen's formula in \eqref{KohnenFormula-intro} relates the $|\d|$-th Fourier coefficient of any non-zero half-integral weight modular form in Shimura--Shintani correspondence with $f$ to the special value $L(f,\chi_{\d},k)$, provided that $\chi_{\d}(\ell)=w_{\ell}$ for all primes $\ell$ dividing $N$. A key point in the proof of that formula is the identity
\begin{equation}\label{rkN-Lvalue}
r_{k,N}(f;\d,\d) = 2^{\nu(N)}|\d|^k(k-1)! \cdot \frac{L(f,\chi_{\d},k)}{(2\pi i)^k\mathfrak g(\chi_{\d})},
\end{equation}
which the reader can check in p. 243 of op. cit. (note that $\mathfrak g(\chi_{\d}) = (-1)^k|\d|/\mathfrak g(\chi_{\d})$). Assuming that $f$ is new, we have $a_{|\d|}(\theta_{k,N,\d}(f)) = C(k,\d)r_{k,N}(f;\d,\d)$, and hence we deduce that
\begin{equation}\label{ad-Lvalue-basic}
    a_{|\d|}(\theta_{k,N,\d}(f))) = C(k,\d)2^{\nu(N)}|\d|^k(k-1)! \cdot \frac{L(f,\chi_{\d},k)}{(2\pi i)^k\mathfrak g(\chi_{\d})}.
\end{equation}

Kohnen's formula in \eqref{KohnenFormula-intro} has been generalized by Kojima--Tokuno to the case where $f$ has non-squarefree level and non-trivial nebentype character, under a suitable multiplicity one assumption. We refer the reader to \cite[Theorems 4.1, 4.2]{KojimaTokuno} for the details. We will rather focus on the identity analogous to \eqref{rkN-Lvalue}, which is also a key step in the proof of the generalization of \eqref{KohnenFormula-intro} but it holds unconditionally. In order to describe such identity, we need to introduce some notation.

Let $f \in S_{2k}^{\mathrm{new}}(N,\chi^2)$ be a normalized newform of weight $2k$, odd level $N\geq 1$, and nebentype character $\chi^2$. As before, $N_0$ will denote the conductor of $\chi$, $\chi_0$ the primitive character associated with $\chi$, and $\epsilon = \chi(-1)$. We continue to assume that $\mathrm{gcd}(N_0,N_1) = 1$, where $N_1 = N/N_0$, and hypothesis \eqref{hypothesisk=1}. With this, let $\d$ be a fundamental discriminant such that $\epsilon(-1)^k\d > 0$, and assume further that\footnote{Notice that this is stronger than our previous assumption $\mathrm{gcd}(N_0,\d)=1$.} $\mathrm{gcd}(N,\d)=1$. We will briefly explain how does one relate $r_{k,N,\chi}(f;\d,\d)$, as defined in \eqref{rkN-def}, to the special value $L(f,\chi_{\d}\bar{\chi}_0,k)$ of the $L$-series of $f$ twisted by $\chi_{\d}\bar{\chi}_0$, yielding the analogous identity to \eqref{rkN-Lvalue} above. 

For each positive divisor $d$ of $N_1$, with $\mathrm{gcd}(d,N_1/d) = 1$ (we write $d || N_1$), let $W_d$ be the $d$-th Atkin--Lehner element in $\GL_2^+(\R)$ defined by any matrix 
\[
    W_d = \frac{1}{\sqrt{d}}\begin{pmatrix} d & \alpha_d \\  N & \beta_d d \end{pmatrix} \quad \text{where } \alpha_d, \beta_d \in \Z \text{ are such that }  \beta_d d^2 - \alpha_d N = d.
\]
Since $d$ divides exactly $N_1$, observe from the definition that $\beta_d \equiv d^{-1}$ modulo $N/d$. 

Since we are considering divisors of $N_1$, the above elements $W_d$ act as automorphisms of $S_{2k}(N,\chi^2)$ via the weight $2k$ slash operator. Furthermore, since $f$ is a normalized newform, for each $d$ as above there exists a normalized newform $f_d \in S_{2k}(N,\chi^2)$ and a non-zero constant $w_d(f)$ such that 
\[
    f |_{2k} W_d = w_d(f) f_d.
\]
These constants are multiplicative, meaning that if $dd' || N_1$ with $\mathrm{gcd}(d,d')=1$, then $w_{dd'}(f) = w_d(f)w_{d'}(f)$.

\begin{proposition}[Kojima--Tokuno]\label{prop:rkN-Lvalue-KT}
Let $k\geq 1$ be an integer and $N\geq 1$ be an odd integer. Let $\chi$ be a Dirichlet character modulo $N$, with conductor $N_0$ and associated primitive character $\chi_0$, and let $\epsilon = \chi(-1)$. Assume $\mathrm{gcd}(N_0,N_1) = 1$, where $N_1 = N/N_0$, and let $\d$ be a fundamental discriminant such that $\mathrm{gcd}(N,\d)=1$ and $\epsilon(-1)^k\d > 0$. If $f \in S_{2k}(N,\chi^2)$ is a normalized Hecke eigenform, then
\begin{equation}\label{rkN-Lvalue-KT}
    r_{k,N,\chi}(f;\d,\d) =  \chi_{\d}(-1) R_{\d}(f) (-1)^k |\d|^k N_0^k (k-1)! \cdot \frac{L(f,\chi_{\d}\bar{\chi}_0,k)}{(2\pi i)^k\mathfrak g(\chi_{\d}\bar{\chi}_0)},
\end{equation}
where 
\[
    R_{\d}(f) = \prod_{\ell^e || N_1}\left(1+\chi_{\d}\bar{\chi}_0(\ell^e)w_{\ell^e}(f) \frac{1-\chi_{\d}\bar{\chi}_0(\ell)a_{\ell}(f)\ell^{-k}}{1-\chi_{\d}\chi_0(\ell)\overline{a_{\ell}(f)}\ell^{-k}}\right).
\]
Here, the product is over the prime divisors $\ell$ of $N_1$, and for each such prime, with $\ell^e || N_1$, $w_{\ell^e}(f)$ denotes the constant associated with $W_{\ell^e}$ as above.
\end{proposition}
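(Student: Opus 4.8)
The plan is to follow the period-integral method of Shimura and Kohnen, adapted by Kojima--Tokuno to the present level and nebentype. The starting observation is that the discriminant entering $r_{k,N,\chi}(f;\d,\d)$ equals $N_0^2\d^2 = (N_0\d)^2$, a perfect square. Hence every form $Q = [a,b,c] \in \mathcal L_N(N_0^2\d^2)$ factors over $\Q$ as $Q(z,1) = a(z-\omega_Q)(z-\omega_Q')$ with rational roots, and (by the case distinction in the definition of $C_Q$) the associated geodesic $C_Q$ is an open path joining the two rational cusps $\omega_Q$ and $\omega_Q'$, rather than a closed geodesic. Consequently the cycle integrals $I_{k,\chi}(f,Q) = \chi_0(Q)\int_{C_Q} f(z)Q(z,1)^{k-1}\,dz$ appearing in \eqref{rkN-def} are integrals of $f$ against a polynomial along a geodesic with rational endpoints; these are precisely the integrals that, once summed with the genus-character weights, produce an $L$-value rather than a period.

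First I would choose, for each $\Gamma_0(N)$-class in $\mathcal L_N(N_0^2\d^2)/\Gamma_0(N)$, a representative in a normal form adapted to the factorization $N = N_0N_1$, so that the endpoints $\omega_Q,\omega_Q'$ can be moved to $0$ and $i\infty$ by an explicit matrix $\gamma_Q \in \GL_2(\Q)$ of positive determinant. Pulling the integral back along $\gamma_Q$ turns $C_Q$ into a vertical segment of the imaginary axis and $Q(z,1)^{k-1}$ into a monomial, so each cycle integral becomes a Mellin transform $\int_0^\infty (f|_{2k}\gamma_Q)(iy)\,y^{k-1}\,dy$ of a slashed copy of $f$; this is what yields the analytic factors $(k-1)!$, $(2\pi i)^{-k}$, together with the powers $|\d|^k$ and $N_0^k$, from the $\Gamma$-factor and the change of variables. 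Because $\gamma_Q$ need not lie in $\Gamma_0(N)$, the slash $f|_{2k}\gamma_Q$ is governed by the Atkin--Lehner operators $W_d$ for $d \mid\mid N_1$, whence the pseudo-eigenvalues $w_d(f)$ and newforms $f_d$ with $f|_{2k}W_d = w_d(f)f_d$ enter the computation. Summing the resulting Mellin transforms against the weights $\omega_\d(Q)\chi_0(Q)$ reorganizes the class sum into a Dirichlet series: the genus character $\omega_\d$ (via the explicit Kronecker-symbol expression \eqref{genuschar-formula}) together with $\chi_0(c)$ selects exactly the twist by $\chi_\d\bar\chi_0$, and the Gauss-sum identities recalled in Section \ref{sec:notation} convert the additive twist into the normalization $L(f,\chi_\d\bar\chi_0,k)/\big((2\pi i)^k\mathfrak g(\chi_\d\bar\chi_0)\big)$, with the sign $\chi_\d(-1)(-1)^k$ emerging from orientations.

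The main obstacle is the local analysis at the primes dividing $N_1$, which produces the Euler product $R_\d(f)$ and distinguishes this statement from Kohnen's squarefree, trivial-nebentype case, where the analogous factor collapses to $2^{\nu(N)}$ (cf. \eqref{rkN-Lvalue}). For each prime power $\ell^e \mid\mid N_1$ one must reconcile three local pieces: the set of local $\Gamma_0(N)$-orbits of forms at $\ell$, the action of $W_{\ell^e}$ with its pseudo-eigenvalue $w_{\ell^e}(f)$ exchanging $f$ with $f_{\ell^e}$, and the $\ell$-Euler factor of the twisted $L$-series of $f$. Tracking these carefully --- using the newform relation between $a_\ell(f)$, $\overline{a_\ell(f)}$ and $w_{\ell^e}(f)$, and the multiplicativity $w_{dd'}(f) = w_d(f)w_{d'}(f)$ --- should yield the local factor $1+\chi_\d\bar\chi_0(\ell^e)w_{\ell^e}(f)\frac{1-\chi_\d\bar\chi_0(\ell)a_\ell(f)\ell^{-k}}{1-\chi_\d\chi_0(\ell)\overline{a_\ell(f)}\ell^{-k}}$, and the product over all $\ell^e \mid\mid N_1$ gives $R_\d(f)$. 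This bookkeeping at the ramified primes, rather than any single analytic estimate, is the delicate part; it is precisely the computation carried out in \cite{KojimaTokuno}, to which one ultimately appeals.
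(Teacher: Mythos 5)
Your proposal is correct and follows essentially the same route as the paper's proof: both reduce the class sum to explicit representatives with one endpoint at $i\infty$ (the paper takes $Q_{\mu}\circ W_d$ with $Q_{\mu}=[0,\d N_0,\mu]$, which is exactly the normal form you describe reaching by conjugation), turn the cycle integrals into Mellin transforms producing the $\Gamma$-factor and Gauss sum, let the $W_d$-translates for $d\mid\mid N_1$ account for $R_{\d}(f)$, and defer the detailed bookkeeping to \cite[(4-21), (4-22)]{KojimaTokuno}. No gaps to report.
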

\begin{proof}
The elements $W_d$ also act on quadratic forms, by the rule 
\[
Q\circ W_d := {}^t W_d Q W_d, \quad Q \in \mathcal L_N(N_0^2\d^2).
\]
It is straightforward to check that $Q\circ W_d$ belongs again to $\mathcal L_N(N_0^2\d^2)$. Since $W_d$ normalizes $\Gamma_0(N)$, the map $Q \mapsto Q\circ W_d$ establishes a bijection from $\mathcal L_N(N_0^2\d^2)/\Gamma_0(N)$ to itself. In addition, one can take the set 
\[
\left \lbrace Q_{\mu} \circ W_d: \mu \in \Z/\d\Z \times (\Z/ N_0\Z)^{\times}, \, d || N_1 \right \rbrace, \quad Q_{\mu} = \begin{pmatrix} 0 & \d N_0/2 \\ \d N_0/2 & \mu \end{pmatrix},
\]
as a complete set of representatives for $\mathcal L_N(N_0^2\d^2)/\Gamma_0(N)$. One can then rewrite the sum defining $r_{k,N,\chi}(f;\d,\d)$ as a sum over the above set, and use the explicit representatives to compute the involved cycle integrals to eventually deduce the claimed formula (cf. \cite[(4-21), (4-22)]{KojimaTokuno}).
\end{proof}

Note that the identity in $\eqref{rkN-Lvalue-KT}$ reduces to \eqref{rkN-Lvalue} when one assumes that $\chi$ is trivial, $N$ is squarefree and $\chi_{\d}(\ell) = w_{\ell}(f)$ for all primes $\ell \mid N$. Indeed, when $\chi$ is trivial we have $N_0=1$ and $\chi_{\d}(-1)(-1)^k = 1$, and the Fourier coefficients of $f$ are real, thus the assumption that $N$ is squarefree yields $R_{\mathfrak d}(f) =2^{\nu(N)}$. Finally, recalling that when $f$ is new we have $a_{|\d|}(\theta_{k,N,\chi,\d}(f)) = C(k,\chi,\d) r_{k,N,\chi}(f;\d,\d)$, with $C(k,\chi,\d)$ as in \eqref{constant:Ckchid}, we immediately deduce the following:

\begin{corollary}\label{cor:ad-Lvalue}
With the same assumptions as in the theorem, if $f$ is new then
\begin{equation}\label{ad-Lvalue}
    a_{|\d|}(\theta_{k,N,\chi,\d}(f)) =  C(k,\chi,\d) \chi_{\d}(-1)R_{\d}(f)(-1)^k|\d|^kN_0^k(k-1)!\cdot \frac{L(f,\chi_{\d}\bar{\chi}_0,k)}{(2\pi i)^k\mathfrak g(\chi_{\d}\bar{\chi}_0)}.
\end{equation}
\end{corollary}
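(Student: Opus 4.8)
The plan is to combine two ingredients that are already in place: the simplified expression for the Fourier coefficients of the $\d$-th Shintani lifting of a \emph{newform}, and the Kojima--Tokuno evaluation of the cycle-integral sum recorded in Proposition \ref{prop:rkN-Lvalue-KT}. The corollary should then drop out by a single substitution, so the work is really just checking that the indices line up.

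First I would isolate the $|\d|$-th Fourier coefficient of $\theta_{k,N,\chi,\d}(f)$. Since $f$ is new, equation \eqref{dShintani-withcharacter-fnew} asserts that the $m$-th coefficient equals $C(k,\chi,\d)\, r_{k,N,\chi}(f;\d,\epsilon(-1)^k m)$ for every admissible $m$. The one thing to verify here is that $m=|\d|$ is admissible and produces the discriminant $\d$ itself: because $\d$ is a fundamental discriminant one has $\d \equiv 0,1 \pmod 4$, and the standing hypothesis $\epsilon(-1)^k\d>0$ (see Remark \ref{rmk:vanishing}) forces $\epsilon(-1)^k|\d| = \d$ regardless of the sign of $\d$. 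Hence the congruence $\epsilon(-1)^k m \equiv 0,1 \pmod 4$ holds for $m=|\d|$, and we obtain
\[
a_{|\d|}(\theta_{k,N,\chi,\d}(f)) = C(k,\chi,\d)\, r_{k,N,\chi}(f;\d,\d).
\]

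The second and final step is a direct substitution: Proposition \ref{prop:rkN-Lvalue-KT} expresses $r_{k,N,\chi}(f;\d,\d)$ in terms of the twisted special value $L(f,\chi_{\d}\bar{\chi}_0,k)$, the Euler-type factor $R_{\d}(f)$, and the Gauss sum $\mathfrak g(\chi_{\d}\bar{\chi}_0)$. Plugging this into the displayed identity, with $C(k,\chi,\d)$ as in \eqref{constant:Ckchid}, yields precisely \eqref{ad-Lvalue}. I do not expect any genuine obstacle in this argument: the corollary is a formal consequence of Proposition \ref{prop:rkN-Lvalue-KT} together with the newform identity \eqref{dShintani-withcharacter-fnew}, and the only care required is the elementary bookkeeping that the index $m=|\d|$ both selects the discriminant $\d$ and lies in the admissible residue class modulo $4$. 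Everything substantive has already been absorbed into the proposition.
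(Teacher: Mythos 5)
Your proposal is correct and follows exactly the route the paper takes: the identity $a_{|\d|}(\theta_{k,N,\chi,\d}(f)) = C(k,\chi,\d)\,r_{k,N,\chi}(f;\d,\d)$ coming from \eqref{dShintani-withcharacter-fnew} for newforms, combined with the evaluation of $r_{k,N,\chi}(f;\d,\d)$ in Proposition \ref{prop:rkN-Lvalue-KT}. The bookkeeping you carry out (that $m=|\d|$ is admissible and that $\epsilon(-1)^k|\d|=\d$) is exactly the point the paper leaves implicit, and it checks out.
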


Again, note that \eqref{ad-Lvalue} reduces to \eqref{ad-Lvalue-basic} when $\chi$ is trivial, $N$ is squarefree, and the same assumption on Atkin--Lehner eigenvalues as above.

\section{\texorpdfstring{$\d$}{d}-th Shintani liftings of \texorpdfstring{$p$}{p}-stabilized newforms}  \label{Sec:p-stabilizations}

We investigate in this section the $\d$-th Shintani liftings of forms belonging to the old subspace in $S_{2k}(Np,\chi^2)$ arising from newforms of level $N$. More precisely, given a newform $f \in S_{2k}^{new}(N,\chi^2)$, we study the $\d$-th Shintani liftings of forms in the $f$-isotypic component $S_{2k}(Np,\chi^2)[f]$ spanned by $f$ and $V_pf$, where $V_p$ denotes the usual operator $V_pf(z) = f(pz)$. In particular, this will allow us to relate the $\d$-th Shintani lifting (in level $Np$) of the (ordinary) $p$-stabilization of a newform $f \in S_{2k}^{new}(N,\chi^2)$. To achieve this goal, we first need to discuss a detailed study of the classification of integral binary quadratic forms up to equivalence by congruence subgroups.

\subsection{Discussion on integral binary quadratic forms}\label{sec:discussion-quadforms}

Let $M\geq 1$ be an odd integer, and let $\Delta$ be a discriminant. Recall that we have introduced the set
\[
 \mathcal Q_M(\Delta) = \{[a,b,c]\in \mathcal Q(\Delta): \, a \equiv 0 \text{ (mod }M\text{)}\},
\]
on which $\Gamma_0(M)$ acts. Our aim is to classify the $\Gamma_0(M)$-orbits in this set, following the discussion in \cite{GKZ} and emphasizing some aspects that will be of special interest for us. As a matter of notation, we will write forms in $\mathcal Q_M(\Delta)$ as $[aM,b,c]$, where $a, b, c$ are integers.  Yet another invariant for the action of $\Gamma_0(M)$ on $\mathcal Q_M(\Delta)$ is the residue class of $b$ modulo $2M$. Notice that not every residue class is allowed: one must have $b^2 \equiv \Delta \pmod{4M}$. Setting 
\[
 R_M(\Delta) := \{\varrho \in \Z/2M\Z: \, \varrho^2 \equiv \Delta \text{ (mod }4M\text{)}\},
\]
and defining 
\[
  \mathcal Q_{M,\varrho}(\Delta) = \{[a,b,c]\in \mathcal Q(\Delta): \, a \equiv 0  \text{ (mod }M\text{)}, \, b\equiv \varrho \text{ (mod }2M\text{)}\}
\]
for each $\varrho \in R_M(\Delta)$, observe that one has a $\Gamma_0(M)$-invariant decomposition
\[
 \mathcal Q_M(\Delta) = \bigsqcup_{\varrho \in R_M(\Delta)} \mathcal Q_{M,\varrho}(\Delta).
\]
The sets $\mathcal Q_{M,\varrho}(\Delta)$ being $\Gamma_0(M)$-invariant, we are reduced to study their $\Gamma_0(M)$-orbits. We further define the subset of {\em $\Gamma_0(M)$-primitive} forms in $\mathcal Q_{M,\varrho}(\Delta)$ by 
\[
  \mathcal Q_{M,\varrho}^1(\Delta) = \{[aM,b,c]\in \mathcal Q_{M,\varrho}(\Delta): \, \mathrm{gcd}(a,b,c)=1\}.
\]
Note that with this notation, and the analogous notation for $\mathcal Q_{M,\rho}^d(\Delta)$, we have
\[
 \mathcal Q_{M,\rho}^d(\Delta) = d \mathcal Q_{M,\rho}^1(\Delta/d^2).
\]

\begin{remark}
Observe in the above definition that we consider the greatest common divisor of $a$, $b$, and $c$, not of $aM$, $b$, and $c$. 
\end{remark}

One has a $\Gamma_0(M)$-invariant bijection of sets 
\[
 \mathcal Q_{M,\varrho}(\Delta) = \bigsqcup_{d^2\mid \Delta} \bigsqcup_{\substack{\ell \in R_M(\Delta/d^2),\\ d\ell \equiv \varrho \, (2M)}} d\cdot \mathcal Q_{M,\ell}^1(\Delta/d^2),
\]
where $d$ varies over the positive integers such that $d^2$ divides $\Delta$, and $\ell$ varies over the elements in $R_M(\Delta/d^2)$ such that $d\ell \equiv \varrho$ modulo $2M$. Via this last decomposition, it is enough to study the $\Gamma_0(M)$-orbits in the sets of the form $\mathcal Q_{M,\varrho}^1(\Delta)$, where $M\geq 1$ is an integer, $\Delta$ is a discriminant, and $\varrho \in R_M(\Delta)$.

Continue to fix parameters $M$ and $\Delta$ as before. For each $\varrho \in R_M(\Delta)$, associated with the set $\mathcal Q_{M,\varrho}^1(\Delta)$ there is the integer 
\[
m_{\varrho} := \mathrm{gcd}\left(M,\varrho,\frac{\varrho^2-\Delta}{4M}\right),
\]
which is well-defined even if $\varrho$ is only defined modulo $2M$. Indeed, replacing $\varrho$ by $\varrho + 2M$ replaces $\frac{\varrho^2-\Delta}{4M}$ by $\frac{\varrho^2-\Delta}{4M}+\varrho+M$. Thus $m_{\varrho}$ is an invariant of the subset $\mathcal Q_{M,\varrho}^1(\Delta)$. One can check that 
\[
m_{\varrho} = \mathrm{gcd}(M,b,ac) \qquad \text{for any } \, Q = [aM,b,c] \in \mathcal Q_{M,\varrho}^1(\Delta).
\]
In particular, notice that $m_{\varrho} \mid \mathrm{gcd}(M,\Delta)$. Using this, one can further decompose $m_{\varrho}$ as
\[
m_{\varrho} = m_1m_2, \quad \text{ where } \quad m_1 = \mathrm{gcd}(M,b,a), \quad m_2 =\mathrm{gcd}(M,b,c).
\]
Notice that $\mathrm{gcd}(m_1,m_2)=1$ because $\mathrm{gcd}(a,b,c)=1$. In view of this, we have a $\Gamma_0(M)$-invariant decomposition 
\[
\mathcal Q^1_{M,\varrho}(\Delta) = \bigsqcup_{m_1, m_2} \mathcal Q_{M,\varrho,m_1,m_2}^1(\Delta),
\]
where $m_1, m_2$ run over the pairs of positive divisors of $m_{\varrho}$ which satisfy $m_1m_2 = m_{\varrho}$ and $\mathrm{gcd}(m_1,m_2) = 1$. There are $2^{\nu}$ such pairs, where $\nu$ is the number of prime factors of $m_{\varrho}$.

The following statement is the Proposition in p. 505 of \cite{GKZ}:

\begin{proposition}\label{prop:formsGKZ}
 Let $m_{\varrho}$ be defined as above and fix any decomposition $m_{\varrho} = m_1m_2$ with $m_1,m_2>0$ integers such that $\mathrm{gcd}(m_1,m_2)=1$. Fix also any decomposition $M = M_1M_2$ of $M$ into coprime factors satisfying $\mathrm{gcd}(m_1,M_2)=\mathrm{gcd}(m_2,M_1)=1$. Then the map $Q = [aM,b,c] \mapsto \tilde Q = [aM_1,b,cM_2]$ yields a one-to-one correspondence 
 \[
  \mathcal Q_{M,\varrho,m_1,m_2}^1(\Delta)/\Gamma_0(M) \, \stackrel{1:1}{\longrightarrow} \, \mathcal Q^1(\Delta)/\Gamma_0(1).
 \]
 In particular, $|\mathcal Q_{M,\varrho}^1(\Delta)/\Gamma_0(M)| = 2^{\nu}|\mathcal Q^1(\Delta)/\Gamma_0(1)|$ where $\nu$ is the number of prime factors of $m_{\varrho}$.
\end{proposition}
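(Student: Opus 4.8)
The plan is to realize $Q \mapsto \tilde Q$ as an explicit change of variables intertwining the $\Gamma_0(M)$-action with the $\SL_2(\Z)$-action, and then to reduce the bijectivity to a local analysis at the primes dividing $M$.

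First I would verify that the map takes values in $\mathcal Q^1(\Delta)$. Preservation of the discriminant is immediate, as $b^2-4(aM_1)(cM_2)=b^2-4acM$. For primitivity I would argue prime by prime: if a prime $p$ divides $\gcd(aM_1,b,cM_2)$ then $p\mid b$; if $p\mid M_1$ then $p\nmid M_2$ (as $\gcd(M_1,M_2)=1$), forcing $p\mid c$ and hence $p\mid\gcd(M,b,c)=m_2$, contradicting $\gcd(m_2,M_1)=1$; the case $p\mid M_2$ is symmetric via $\gcd(m_1,M_2)=1$; and if $p\nmid M$ then $p\mid a,b,c$, contradicting $\gcd(a,b,c)=1$. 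Hence $\tilde Q\in\mathcal Q^1(\Delta)$.

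Next I would exhibit the geometric meaning of the map: one has $\tilde Q=Q\circ D$ for $D=\begin{pmatrix}M_2^{-1/2}&0\\0&M_2^{1/2}\end{pmatrix}\in\SL_2(\R)$, since ${}^tDA_QD=A_{\tilde Q}$ using $M/M_2=M_1$. A direct computation gives $D^{-1}gD=\begin{pmatrix}\alpha&M_2\beta\\c'/M_2&\delta\end{pmatrix}$ for $g=\begin{pmatrix}\alpha&\beta\\c'&\delta\end{pmatrix}\in\Gamma_0(M)$, which lies in $\SL_2(\Z)$ because $M_2\mid M\mid c'$; thus $\Gamma':=D^{-1}\Gamma_0(M)D$ is a congruence subgroup of $\SL_2(\Z)$. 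Since $P\mapsto P\circ D^{-1}$ inverts $Q\mapsto Q\circ D$ and this bijection intertwines the $\Gamma_0(M)$- and $\Gamma'$-actions, it descends to a bijection $\mathcal Q^1_{M,\varrho,m_1,m_2}(\Delta)/\Gamma_0(M)\xrightarrow{\sim}S/\Gamma'$, where $S$ is the image set. The proposition is therefore equivalent to showing that the inclusion $S\hookrightarrow\mathcal Q^1(\Delta)$ induces a bijection $S/\Gamma'\xrightarrow{\sim}\mathcal Q^1(\Delta)/\SL_2(\Z)$.

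To establish this I would construct the inverse by a reduction argument. For surjectivity, given a primitive $P=[A,B,C]$ of discriminant $\Delta$, I would use the $\SL_2(\Z)$-action together with the Chinese Remainder Theorem to replace $P$ by an equivalent form with $M_1\mid A$, $M_2\mid C$ and $B\equiv\varrho\pmod{2M}$, so that $a=A/M_1$, $b=B$, $c=C/M_2$ produce a preimage $Q=[aM,b,c]\in\mathcal Q^1_{M,\varrho,m_1,m_2}(\Delta)$. For injectivity I would show, transporting back by $D$, that an $\SL_2(\Z)$-equivalence between two forms of $\mathcal Q^1_{M,\varrho,m_1,m_2}(\Delta)$ can always be refined to a $\Gamma_0(M)$-equivalence. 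The main obstacle is exactly this local analysis at the primes $p\mid M$: away from $M$ there is no constraint, so by CRT everything localizes at such primes; for $p\mid M$ with $p\nmid m_\varrho$ the congruence $b\equiv\varrho$ rigidifies the local class, whereas for $p\mid m_\varrho$ one must check that the \emph{ordered} coprime splitting $m_\varrho=m_1m_2$ records precisely the binary choice of whether $p$ divides $m_1=\gcd(M,b,a)$ or $m_2=\gcd(M,b,c)$, yielding two local classes per such prime. Verifying that these local choices glue to a unique global $\Gamma_0(M)$-class, compatibly with every $\SL_2(\Z)$-class, is the heart of the argument, and upon summing over the $2^{\nu}$ admissible decompositions in $\mathcal Q^1_{M,\varrho}(\Delta)=\bigsqcup_{m_1,m_2}\mathcal Q^1_{M,\varrho,m_1,m_2}(\Delta)$ it produces the stated factor $2^{\nu}$.
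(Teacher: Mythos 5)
First, a point of reference: the paper does not prove this proposition at all --- it is quoted verbatim as ``the Proposition in p.~505 of \cite{GKZ}'' and used as a black box --- so there is no internal argument to measure yours against; you would need to supply a complete proof (or simply cite \cite{GKZ}). The parts of your proposal that are actually carried out are correct: $\tilde Q$ is primitive of discriminant $\Delta$ (your prime-by-prime case analysis is right), the map is $Q\mapsto Q\circ D$ with $D=\mathrm{diag}(M_2^{-1/2},M_2^{1/2})$, the conjugate $\Gamma'=D^{-1}\Gamma_0(M)D$ lands in $\SL_2(\Z)$ because $M_2\mid M$, and the intertwining correctly reduces the proposition to the assertion that $S/\Gamma'\to \mathcal Q^1(\Delta)/\SL_2(\Z)$ is a bijection, where $S$ is the ($\Gamma'$-stable) image set.

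The gap is that this last assertion \emph{is} the proposition, and your proposal stops exactly there: both halves are described (``one must check\dots'', ``verifying that these local choices glue \dots is the heart of the argument'') rather than proved. Surjectivity is the easier half: one produces in each $\SL_2(\Z)$-class a representative $[A,B,C]$ with $M_1\mid A$, $M_2\mid C$, $B\equiv\varrho\pmod{2M}$ by the standard device of representing an integer prime to $M$ and translating $B$; and, contrary to what your sketch suggests, landing in the prescribed component $(m_1,m_2)$ is then automatic --- for $p\mid m_1$ one has $p\mid M_1\mid A$ and $p\mid B$, so primitivity of $[A,B,C]$ forces $p\nmid C$, whence $\gcd(M,B,A/M_1)$ absorbs the full $p$-part of $m_\varrho$; so this half could be completed along your lines. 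Injectivity --- that any $g\in\SL_2(\Z)$ with $\tilde Q\circ g=\tilde Q'$ can be replaced by an element of $\Gamma'$ --- is the genuinely delicate step and is not addressed at all: it requires analyzing the transporter (a coset of the automorph group of $\tilde Q$, which for nonsquare $\Delta>0$ is infinite up to sign) prime by prime at the divisors of $M$, using the congruence $b\equiv\varrho\pmod{2M}$ and the invariants $m_1,m_2$ in an essential way to show the transporter meets $\Gamma'$. As written, the proposal is a correct reduction followed by a restatement of what remains to be shown, not a proof.
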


Notice that this proposition finishes the precise description of  the set of integral binary quadratic forms $\mathcal Q_M(\Delta)$, up to $\Gamma_0(M)$-action. Indeed, as a summary of the above discussion we have a disjoint union of $\Gamma_0(M)$-invariant sets
\begin{equation}\label{decompositionQs}
\mathcal Q_M(\Delta) = \bigsqcup_{d^2\mid\Delta} \bigsqcup_{\varrho \in R_M(\Delta/d^2)} \bigsqcup_{(m_1,m_2)} d \cdot \mathcal Q_{M,\varrho,m_1,m_2}^1(\Delta/d^2),
\end{equation}
where $d$ varies over the positive integers such that $d^2\mid \Delta$, and for each $\varrho\in R_M(\Delta/d^2)$ the pair $(m_1,m_2)$ ranges over the pairs of coprime positive integers with $m_1m_2=m_{\varrho}$. For each tuple $(d,\varrho,m_1,m_2)$, the set of classes $\mathcal Q_{M,\varrho,m_1,m_2}^1(\Delta/d^2)/\Gamma_0(M)$ is in bijection with $\mathcal Q^1(\Delta/d^2)/\Gamma_0(1)$, which is a class group {\em à la Gauss}.

Fix an odd integer $N\geq 1$, an odd prime $p$ not dividing $N$, and a discriminant $\Delta$ divisible by $p$. To close this paragraph, we will use Proposition \ref{prop:formsGKZ} to compare certain sets of orbits of quadratic forms of a given discriminant by congruence groups of different levels $N$ and $Np$. To do so, we first need to compare the sets $R_N(\Delta)$ and $R_{Np}(\Delta)$. By our hypotheses, $\Delta$ is a square modulo $Np$ if and only if it is a square modulo $N$, so that $R_{Np}(\Delta)$ is non-empty if and only if $R_{N}(\Delta)$ is non-empty. We have the following:

\begin{lemma} \label{lemma:mrho}
If $R_N(\Delta)$ is non-empty, then the natural morphism $\Z/2Np\Z \to \Z/2N\Z$ induces a bijection
 \[
  r_d\colon R_{Np}(\Delta) \, \longrightarrow \, R_N(\Delta),
 \]
and the following assertions hold:
 \begin{item}
  \item[i)] if $p^2 \nmid \Delta$, then $m_{\varrho'} = m_{r_d(\varrho')}$ for all $\varrho' \in R_{Np}(\Delta)$;
  \item[ii)] if $p^2 \mid \Delta$, then $m_{\varrho'} = p m_{r_d(\varrho')}$  for all $\varrho' \in R_{Np}(\Delta)$.
 \end{item}
\end{lemma}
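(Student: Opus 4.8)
The plan is to treat the bijection and the two valuation statements separately, the common thread being that since $p \mid \Delta$, every $\varrho' \in R_{Np}(\Delta)$ is forced to be divisible by $p$: indeed $\varrho'^2 \equiv \Delta \pmod{4Np}$ implies $\varrho'^2 \equiv \Delta \equiv 0 \pmod p$, whence $p \mid \varrho'$. First I would establish the bijection $r_d$ using the Chinese Remainder Theorem applied to $2Np = 2N \cdot p$ (note $\mathrm{gcd}(2N,p)=1$, as $p$ is odd and $p \nmid N$). The reduction map is well defined into $R_N(\Delta)$ because $4N \mid 4Np$. For the inverse, given $\varrho \in R_N(\Delta)$ one lifts it to the unique $\varrho' \in \Z/2Np\Z$ with $\varrho' \equiv \varrho \pmod{2N}$ and $\varrho' \equiv 0 \pmod p$; such a $\varrho'$ satisfies $\varrho'^2 \equiv \Delta \pmod{4N}$ and $\varrho'^2 \equiv 0 \equiv \Delta \pmod p$, hence lies in $R_{Np}(\Delta)$ by CRT. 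Since every element of $R_{Np}(\Delta)$ is divisible by $p$, it coincides with the lift of its own reduction, so $r_d$ is a bijection.

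For the valuation identities I would fix integer representatives $\varrho$ of $r_d(\varrho')$ and $\varrho' = \varrho + 2Ns$ for a suitable $s \in \Z$, and set $A := \frac{\varrho'^2 - \Delta}{4Np} \in \Z$ and $B := \frac{\varrho^2 - \Delta}{4N} \in \Z$, so that $m_{\varrho'} = \mathrm{gcd}(Np,\varrho',A)$ and $m_{r_d(\varrho')} = \mathrm{gcd}(N,\varrho,B)$. A direct computation gives the key relation $Ap - B = \frac{\varrho'^2 - \varrho^2}{4N} = s\varrho + Ns^2$. I would then split $m_{\varrho'}$ into its prime-to-$p$ part and its $p$-part prime by prime, using $\ord_q(Np) = \ord_q(N)$ for $q \neq p$ and $\ord_p(Np)=1$.

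For the prime-to-$p$ part the claim is $\prod_{q \neq p} q^{\min(\ord_q(N),\ord_q(\varrho'),\ord_q(A))} = m_{r_d(\varrho')}$. Because $\varrho' \equiv \varrho \pmod{2N}$, a short valuation argument shows $\min(\ord_q(N),\ord_q(\varrho')) = \min(\ord_q(N),\ord_q(\varrho))$ for every $q \mid N$, so $\varrho'$ may be replaced by $\varrho$ throughout the gcd. Multiplying $A$ by the unit $p$ (modulo each $q \mid N$) lets me replace $A$ by $Ap = B + s\varrho + Ns^2$; and since $s\varrho + Ns^2 = s\cdot\varrho + s^2\cdot N$ is a combination of the other two entries $\varrho$ and $N$ of the gcd, it may be dropped. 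This yields $\mathrm{gcd}(N,\varrho',A) = \mathrm{gcd}(N,\varrho,B) = m_{r_d(\varrho')}$, which is already prime to $p$ since it divides $N$.

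Finally, the $p$-part equals $p^{\min(1,\ord_p(\varrho'),\ord_p(A))} = p^{\min(1,\ord_p(A))}$, using $\ord_p(\varrho') \geq 1$. Here $\ord_p(A) = \ord_p(\varrho'^2 - \Delta) - 1$ since $\ord_p(4Np)=1$, and $\ord_p(\varrho'^2) = 2\ord_p(\varrho') \geq 2$. When $p^2 \nmid \Delta$ one has $\ord_p(\Delta)=1$, so $\ord_p(\varrho'^2-\Delta)=1$ and $\ord_p(A)=0$, giving $p$-part $=1$ and $m_{\varrho'} = m_{r_d(\varrho')}$; when $p^2 \mid \Delta$ one has $\ord_p(\varrho'^2-\Delta)\geq 2$, so $\ord_p(A)\geq 1$, giving $p$-part $=p$ and $m_{\varrho'} = p\,m_{r_d(\varrho')}$. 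The only delicate point is the prime-to-$p$ bookkeeping of the third paragraph; everything else is forced by the divisibility $p \mid \varrho'$.
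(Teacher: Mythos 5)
Your proof is correct and follows essentially the same route as the paper: the bijection via the Chinese Remainder Theorem together with the observation that $p\mid\Delta$ forces $p\mid\varrho'$, and then a factorization of the gcd defining $m_{\varrho'}$ into its $p$-part and prime-to-$p$ part. The paper shortcuts your third paragraph by choosing a \emph{single} integer $R$ representing both $\varrho'$ and its reduction $r_d(\varrho')$, so that the prime-to-$p$ comparison reduces to the observation that $\gcd\bigl(N,R,\frac{R^2-\Delta}{4Np}\bigr)=\gcd\bigl(N,R,\frac{R^2-\Delta}{4N}\bigr)$ because $p$ is a unit modulo $N$, avoiding the $Ap-B=s\varrho+Ns^2$ bookkeeping entirely.
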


\begin{proof}
	From the Chinese remainder theorem it is clear that the set of elements in $\Z/2Np\Z$ divisible by $p$ is in bijection with the elements of $\Z/2N\Z$ and by hypothesis  $R_{Np}(\Delta)$ contains only elements divisible by $p$. The first part of the statement follows.
	
	For the second part, write $\varrho = r_d(\varrho')$, and let $R$ be an integer representing both $\varrho \in R_{N}(\Delta) \subset \Z/2N\Z$ and $\varrho' \in R_{Np}(\Delta) \subset \Z/2Np\Z$.  Since $R\equiv \varrho' \pmod{2Np}$, and we assume $p \mid \Delta$, we have that $p\mid R$. The invariants $m_{\varrho}$ and $m_{\varrho'}$ associated with the sets 
 \[
  \mathcal Q_{N,\varrho}^1(\Delta) \quad \text{and} \quad \mathcal Q_{Np,\varrho'}^1(\Delta)
 \]
 are defined as 
 \[
  m_{\varrho} = \mathrm{gcd}\left(N,R,\frac{R^2 - \Delta}{4N}\right) \quad \text{and} \quad 
  m_{\varrho'} = \mathrm{gcd}\left(Np,R,\frac{R^2 - \Delta}{4Np}\right),
 \]
 respectively. Since $p \nmid N$, we have
 \[
 	m_{\varrho'} = \mathrm{gcd}\left(p,R,\frac{R^2 - \Delta}{4Np}\right) \mathrm{gcd}\left(N,R,\frac{R^2 - \Delta}{4Np}\right).
 \]
 When $p^2\nmid \Delta$, the first term is trivial, hence $m_{\varrho'} = m_{\varrho}$. If $p^2\mid \Delta$ instead, then $p\mid (R^2-\Delta)/4Np$ and the first term is $p$, so that  $m_{\varrho'} = p m_{\varrho}$.

\end{proof}

We will be interested in comparing sets of the form 
\[
\mathcal L_{N}(N_0^2\Delta)/\Gamma_0(N) \quad \text{and} \quad \mathcal L_{Np}(N_0^2\Delta)/\Gamma_0(Np),
\]
from Section \ref{Sec:Shintani-Lifting}, where the discriminant $\Delta$, the prime $p$ and the level $N=N_0N_1$ satisfy the following conditions:
\begin{itemize}
 \item[i)] $N_0$ and $N_1$ are coprime integers, i.e. $\mathrm{gcd}(N_0,N_1) = 1$;
 \item[ii)] $p$ is an odd prime such that $p\nmid N$ and $p\mid \Delta$.
\end{itemize}
Recall also that the sets `$\mathcal L$' consist of forms in the sets `$\mathcal Q$' with  $\mathrm{gcd}(N_0,c)=1$. We will naturally write $\mathcal L_{N,\varrho}^1(\cdot)$, $\mathcal L_{N,\varrho,m_1,m_2}^1(\cdot)$, etc., for the intersection of $\mathcal Q_{N,\varrho}^1(\cdot)$, $\mathcal Q_{N,\varrho,m_1,m_2}^1(\cdot)$, etc. with $\mathcal L_N(\cdot)$. An important observation is that, via the decomposition in \eqref{decompositionQs}, the subset $\mathcal L_N(N_0^2\Delta)$ (resp. $\mathcal L_{Np}(N_0^2\Delta)$) of $\mathcal Q_N(N_0^2\Delta)$ (resp. $\mathcal Q_{Np}(N_0^2\Delta)$) is precisely the union of those subsets $d\cdot \mathcal Q_{N,\varrho,m_1,m_2}^1(N_0^2\Delta/d^2)$ (resp. $d\cdot \mathcal Q_{Np,\varrho,m_1,m_2}^1(N_0^2\Delta/d^2)$) with $\mathrm{gcd}(m_2,N_0)=1$ and $\mathrm{gcd}(d,N_0)=1$. In particular, \eqref{decompositionQs} yields the analogous $\Gamma_0(N)$-invariant (resp. $\Gamma_0(Np)$-invariant) decompositions
\begin{equation}\label{decompositionLs:N}
 \mathcal L_N(N_0^2\Delta) = \bigsqcup_{\substack{d^2\mid \Delta,\\\mathrm{gcd}(d,N_0)=1}} \bigsqcup_{\varrho \in R_N(N_0^2\Delta/d^2)} \bigsqcup_{\substack{(m_1,m_2),\\ \mathrm{gcd}(m_2,N_0)=1}} d \cdot \mathcal Q_{N,\varrho,m_1,m_2}^1(N_0^2\Delta/d^2)
\end{equation}
and 
\begin{equation}\label{decompositionLs:Np}
 \mathcal L_{Np}(N_0^2\Delta) = \bigsqcup_{\substack{d^2\mid \Delta,\\\mathrm{gcd}(d,N_0)=1}} \bigsqcup_{\varrho' \in R_{Np}(N_0^2\Delta/d^2)} \bigsqcup_{\substack{(m_1',m_2'),\\ \mathrm{gcd}(m'_2,N_0)=1}} d \cdot \mathcal Q_{Np,\varrho',m_1',m_2'}^1(N_0^2\Delta/d^2).
\end{equation}

Here, $d$ varies over the positive integers such that $d^2 \mid \Delta$ and $\mathrm{gcd}(d,N_0)=1$, and for each $\varrho \in R_N(N_0^2\Delta/d^2)$ (resp. $\varrho' \in R_{Np}(N_0^2\Delta/d^2)$) the pair $(m_1,m_2)$ (resp. $(m_1',m_2')$) ranges over the pairs of coprime positive integers with $m_1m_2=m_{\varrho}$ (resp. $m_1'm_2'=m_{\varrho'}$) satisfying the extra condition $\mathrm{gcd}(N_0,m_2)=1$ (resp. $\mathrm{gcd}(N_0,m_2') = 1$). 

For each choice of $d$, $\varrho$, and $(m_1,m_2)$ arising in the decomposition \eqref{decompositionLs:N}, by virtue of Proposition \ref{prop:formsGKZ} we may fix a bijection
\begin{align*}
 \beta_{m_1,m_2}^{d,\varrho}\colon \mathcal Q_{N,\varrho,m_1,m_2}^1(N_0^2\Delta/d^2)/\Gamma_0(N) \, & \stackrel{1:1}{\longrightarrow} \, \mathcal Q^1(N_0^2\Delta)/\Gamma_0(1)\\
 [aN,b,c] \, & \longmapsto [aM_1,b,M_2],
\end{align*}
where $N=M_1M_2$ is any decomposition of $N$ as a product of two coprime integers such that $\mathrm{gcd}(m_1,M_2)=\mathrm{gcd}(m_2,M_1)=1$.

By virtue of lemma \ref{lemma:mrho}, when $p^2\nmid \Delta$ the sets of pairs $(m_1,m_2)$ and $(m_1',m_2')$ arising in the decompositions \eqref{decompositionLs:N} and \eqref{decompositionLs:Np} are exactly the same, and Proposition \ref{prop:formsGKZ} easily implies the following:

\begin{corollary}\label{cor:nop2}
 Let $N$, $N_0$, $\Delta$, and $p$ be as above, with $p^2 \nmid \Delta$. Then both $\mathrm{id}\colon [aNp,b,c] \mapsto [(ap)N,b,c]$ and $\tau\colon [aNp,b,c] \mapsto [aN,b,cp]$ induce bijections
 \[
  \mathcal L_{Np}(N_0^2\Delta)/\Gamma_0(Np) \, \stackrel{1:1}{\longrightarrow} \, \mathcal L_{N}(N_0^2\Delta)/\Gamma_0(N).
 \]
\end{corollary}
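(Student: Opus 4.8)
The plan is to descend the whole comparison to the primitive strata provided by the decompositions \eqref{decompositionLs:N} and \eqref{decompositionLs:Np}, and then to match these strata one by one by means of Proposition \ref{prop:formsGKZ}. Since $p^2\nmid\Delta$ and $\gcd(d,N_0)=1$, every $d$ occurring in \eqref{decompositionLs:Np} is prime to $p$, so the discriminant $\Delta':=N_0^2\Delta/d^2$ satisfies $v_p(\Delta')=1$. The arithmetic fact that drives the argument is the following: for any form $Q=[aNp,b,c]$ lying in one of the sets $\mathcal Q_{Np,\varrho'}^1(\Delta')$, reducing $b^2-4aNpc=\Delta'$ modulo $p$ forces $p\mid b$, and then comparing $p$-adic valuations in the same identity (using $v_p(\Delta')=1$ and $v_p(b^2)\geq 2$) forces $v_p(ac)=0$, i.e. $p\nmid a$ and $p\nmid c$. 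In particular $p\nmid m_{\varrho'}$, so $p$ divides none of the invariants attached to the stratum.

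I would first check that $\mathrm{id}$ and $\tau$ send $\mathcal L_{Np}(N_0^2\Delta)$ into $\mathcal L_{N}(N_0^2\Delta)$ and descend to $\Gamma_0(Np)$-orbits. For $\mathrm{id}$ this is immediate: it is the identity on underlying forms, it preserves the discriminant and the condition $\gcd(N_0,c)=1$, and the descent is automatic from $\Gamma_0(Np)\subseteq\Gamma_0(N)$. For $\tau$ one observes that $\tau(Q)=\tfrac1p\bigl(Q\circ W\bigr)$ with $W=\bigl(\begin{smallmatrix}1&0\\0&p\end{smallmatrix}\bigr)$; this preserves the discriminant, and $\gcd(N_0,cp)=1$ because $\gcd(N_0,c)=1$ and $p\nmid N_0$. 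For the descent, a direct computation shows that $W^{-1}\gamma W\in\Gamma_0(N)$ whenever $\gamma\in\Gamma_0(Np)$, whence $\tau(Q\circ\gamma)=\tau(Q)\circ(W^{-1}\gamma W)$ and $\tau$ is well defined on orbits.

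Next I would verify that the two maps respect the stratifications \eqref{decompositionLs:N}--\eqref{decompositionLs:Np}, carrying the piece indexed by $(d,\varrho',m_1',m_2')$ to the one indexed by $(d,r_d(\varrho'),m_1',m_2')$. The content $d$ is preserved because both maps commute with the scaling $Q\mapsto dQ$; the residue invariant transforms through the bijection $r_d$ of Lemma \ref{lemma:mrho}, since both maps fix $b$ and $r_d$ is induced by reduction from $\bmod\,2Np$ to $\bmod\,2N$; and the pair $(m_1,m_2)$ is unchanged because in each of the relevant gcd's one of the arguments $a$ or $c$ is prime to $p$ while $p\nmid N$, making the factor $p$ immaterial. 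Here it is essential that $m_{\varrho'}=m_{r_d(\varrho')}$, which is exactly case i) of Lemma \ref{lemma:mrho} and uses $v_p(\Delta')=1$. Finally, on each primitive stratum I would apply Proposition \ref{prop:formsGKZ} with a factorization $Np=M_1'M_2'$ chosen to absorb $p$ on the appropriate side: taking $M_1'=pM_1$, $M_2'=M_2$ for $\mathrm{id}$, and $M_1'=M_1$, $M_2'=pM_2$ for $\tau$, where $N=M_1M_2$ is an admissible factorization at level $N$. Because $p$ divides none of the $m_i'$, these choices satisfy the coprimality hypotheses of the proposition, and comparing the explicit formulas for the normalizing bijections $\beta$ (at level $N$) and $\beta'$ (at level $Np$) shows that $\beta\circ\mathrm{id}=\beta'$ (resp. $\beta\circ\tau=\beta'$) as maps down to $\mathcal Q^1(\Delta')/\Gamma_0(1)$. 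As $\beta$ and $\beta'$ are bijections, so are $\mathrm{id}$ and $\tau$ on each stratum, and reassembling the strata yields the desired bijections.

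The step I expect to be the real crux is this last compatibility with the normalizing bijections through the free placement of $p$ in $Np=M_1'M_2'$. This is precisely where the hypothesis $p^2\nmid\Delta$ enters: it guarantees $p\nmid ac$, hence that $p$ is invisible to all the stratum invariants, and it is also what cleanly separates the two liftings---$\mathrm{id}$ carries $p$ on the side of the leading coefficient, whereas $\tau$ carries it on the side of the constant term. In the complementary case $p^2\mid\Delta$ one is instead in case ii) of Lemma \ref{lemma:mrho}, where $m_{\varrho'}=p\,m_{r_d(\varrho')}$ and this clean matching breaks down.
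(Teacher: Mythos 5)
Your proposal is correct and follows essentially the same route as the paper: decompose both sides into the primitive strata of \eqref{decompositionLs:N}--\eqref{decompositionLs:Np}, use case i) of Lemma \ref{lemma:mrho} to see that the invariants $(d,\varrho,m_1,m_2)$ match and that $p\nmid m_{\varrho'}$, and then apply Proposition \ref{prop:formsGKZ} to $Np$ with the two admissible factorizations $(pM_1)M_2$ and $M_1(pM_2)$, comparing against the level-$N$ bijection to the common target $\mathcal Q^1(\Delta/d^2)/\Gamma_0(1)$. The extra verifications you supply (well-definedness of $\tau$ on orbits via conjugation by $\mathrm{diag}(1,p)$, and $p\nmid ac$ from $v_p$ of the discriminant) are details the paper leaves implicit, not a different argument.
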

\begin{proof}
 Fix a positive integer $d$ with $d^2 \mid \Delta$ and $\mathrm{gcd}(d,N_0)=1$, and any element $\varrho \in R_N(N_0^2\Delta/d^2)$. Let $\varrho' \in R_{Np}(N_0^2\Delta/d^2)$ be the unique element such that $r_d(\varrho')=\varrho$. By lemma \ref{lemma:mrho}, $m_{\varrho'} = m_{\varrho}$, and if $m_1$, $m_2$ are coprime integers with $m_1m_2 = m_{\varrho}$ and $\mathrm{gcd}(m_2,N_0)=1$, Proposition \ref{prop:formsGKZ} tells us that both arrows in 
 \[
  \mathcal Q_{N,\varrho,m_1,m_2}^1(N_0^2\Delta/d^2)/\Gamma_0(N) \, \stackrel{\beta_{m_1,m_2}^{d,\varrho}}{\longrightarrow} \, \mathcal Q^1(\Delta/d^2)/\Gamma_0(1) \, \longleftarrow \, \mathcal Q_{Np,\varrho,m_1,m_2}^1(N_0^2\Delta/d^2)/\Gamma_0(Np)
 \]
 are bijections, where the arrow on the right can be both $[aNp,b,c] \mapsto [apM_1,b,cM_2]$ or $[aNp,b,c] \mapsto [aM_1,b,cpM_2]$. Indeed, both maps are allowed because $p$ does not divide $m_{\varrho'} = m_{\varrho}$. The proposition follows by applying the same argument for each possible pair $(m_1,m_2)$.
\end{proof}

Suppose now that $p^2 \mid \Delta$. Then we can write a $\Gamma_0(Np)$-invariant decomposition
\[
\mathcal L_{Np}(N_0^2\Delta) = \mathcal L_{Np}^{(p)}(N_0^2\Delta)  \sqcup \mathcal L_{Np}^{[p]}(N_0^2\Delta),
\]
where
\begin{align*}
 \mathcal L_{Np}^{(p)}(N_0^2\Delta) & = \bigsqcup_{\substack{d^2\mid \Delta, \, p \mid d, \\ \mathrm{gcd}(d,N_0)=1}} \bigsqcup_{\varrho' \in R_{Np}(N_0^2\Delta/d^2)} d \cdot \mathcal L_{Np,\varrho'}^1(N_0^2\Delta/d^2), \\
 \mathcal L_{Np}^{[p]}(N_0^2\Delta) & = \bigsqcup_{\substack{d^2\mid \Delta, \, p \nmid d\\ \mathrm{gcd}(d,N_0)=1}} \bigsqcup_{\varrho' \in R_{Np}(N_0^2\Delta/d^2)} d \cdot \mathcal L_{Np,\varrho'}^1(N_0^2\Delta/d^2).
\end{align*}
The first piece consists of `non-$p$-primitive', and we will deal easily with them. We must rather focus our attention now on $\mathcal L_{Np}^{[p]}(N_0^2\Delta)$. To do so, fix a positive integer $d$ with $d^2 \mid \Delta$, $p\nmid d$, and $\mathrm{gcd}(d,N_0)=1$. Observe first that $p$ divides $N_0^2\Delta/d^2$; this already implies that $R_{Np}(N_0^2\Delta/d^2)$ is non-empty whenever so is $R_{N}(N_0^2\Delta/d)$. In addition, observe that if $[aNp,b,c]$ belongs to $\mathcal Q_{Np,\varrho'}^1(\Delta/d^2)$ for some $\varrho' \in R_{Np}(N_0^2\Delta/d^2)$, then either
\begin{itemize}
 \item[i)] $p \mid a$ and $p \nmid c$, or
 \item[ii)] $p\nmid a$ and $p\mid c$.
\end{itemize}
Indeed, this is clear since $p^2$ divides the discriminant of $[aNp,b,c]$ and $p \nmid \mathrm{gcd}(a,b,c)$. We write $\mathcal Q_{Np,\varrho'}^{1,a}(N_0^2\Delta/d^2)$, resp. $\mathcal Q_{Np,\varrho'}^{1,c}(N_0^2\Delta/d^2)$, for the subset of forms in $\mathcal Q_{Np,\varrho'}^1(N_0^2\Delta/d^2)$ falling in case i), resp. ii). In a natural way, proceeding like this for all $d$ and all $\varrho' \in R_{Np}(N_0^2\Delta/d^2)$ (and all the appropriate pairs $(m_1',m_2')$, with the extra condition $\mathrm{gcd}(m_2',N_0)=1$), one eventually defines $\mathcal L_{Np}^{[p],a}(N_0^2\Delta)$ and $\mathcal L_{Np}^{[p],c}(N_0^2\Delta)$, so that 
\[
\mathcal L_{Np}^{[p]}(N_0^2\Delta) = \mathcal L_{Np}^{[p],a}(N_0^2\Delta) \sqcup \mathcal L_{Np}^{[p],c}(N_0^2\Delta).
\]
We will now compare each of the sets $\mathcal L_{Np}^{[p],\star}(N_0^2\Delta)/\Gamma_0(Np)$ with the set $\mathcal L_N^{[p]}(N_0^2\Delta)/\Gamma_0(N)$, for $\star \in \{a,c\}$. 

Fix an integer $d$ and a pair $\varrho', \varrho$ as in lemma \ref{lemma:mrho}. Each admissible factorization $m_{\varrho} = m_1m_2$ gives rise to two possible admissible factorizations $m_{\varrho'} = m_1'm_2'$, namely 
\[
 m_1' = pm_1, m_2' = m_2 \qquad \text{and} \qquad m_1' = m_1, m_2' = pm_2.
\]
With this in mind, we can apply Proposition \ref{prop:formsGKZ} to deduce the following:

\begin{corollary}\label{cor:p2}
 Let $N$, $N_0$, $\Delta$ and $p$ be as above, with $p^2\mid \Delta$. Then the map $\mathrm{id}\colon [aNp,b,c] \mapsto [(ap)N,b,c]$ induces a bijection 
 \[
  \mathcal L_{Np}^{[p],a}(N_0^2\Delta)/\Gamma_0(Np) \, \stackrel{1:1}{\longrightarrow} \, \mathcal L_N^{[p]}(N_0^2\Delta)/\Gamma_0(N),
 \]
 and the map $\tau\colon [aNp,b,c] \mapsto [aN,b,cp]$ induces a bijection 
 \[
  \mathcal L_{Np}^{[p],c}(N_0^2\Delta)/\Gamma_0(Np) \, \stackrel{1:1}{\longrightarrow} \, \mathcal L_N^{[p]}(N_0^2\Delta)/\Gamma_0(N).
 \]
\end{corollary}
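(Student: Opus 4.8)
The plan is to reduce Corollary \ref{cor:p2} to Proposition \ref{prop:formsGKZ} by carefully tracking the invariants $m_{\varrho'}$ through the two maps $\mathrm{id}$ and $\tau$. First I would fix $d$ with $p \nmid d$ and $\mathrm{gcd}(d,N_0)=1$, fix $\varrho \in R_N(N_0^2\Delta/d^2)$, and let $\varrho' = r_d^{-1}(\varrho) \in R_{Np}(N_0^2\Delta/d^2)$ be the corresponding residue class. Since $p^2 \mid N_0^2\Delta/d^2$ (as $p \nmid d N_0$), Lemma \ref{lemma:mrho}(ii) gives $m_{\varrho'} = p\, m_{\varrho}$. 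The coprimality constraint on the pair and the fact that $p$ exactly divides the ratio $m_{\varrho'}/m_{\varrho}$ force the two admissible lifts of each factorization $m_{\varrho} = m_1 m_2$ to be exactly $(pm_1, m_2)$ and $(m_1, pm_2)$, as already recorded before the statement. The key point is that case i), where $p \mid a$ and $p \nmid c$, corresponds precisely to $m_1' = pm_1$ (the $p$ is absorbed into the first coprime factor, coming from the $a$-side), while case ii), where $p \nmid a$ and $p \mid c$, corresponds to $m_2' = pm_2$.

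Next I would match each piece with the appropriate instance of Proposition \ref{prop:formsGKZ}. For the $\mathrm{id}$ map on $\mathcal L_{Np}^{[p],a}$, a form $[aNp,b,c]$ with $p \mid a$ is rewritten as $[(ap)N,b,c]$, landing in $\mathcal L_N^{[p]}$; here the relevant factorization on the $Np$-side is $m_{\varrho'} = (pm_1)\cdot m_2$, and the corresponding factorization on the $N$-side is $m_{\varrho} = m_1 \cdot m_2$. Applying the bijection $\beta_{m_1,m_2}^{d,\varrho}$ of Proposition \ref{prop:formsGKZ} in both levels (for $N$ and for $Np$), with a compatible choice of the coprime splitting $M = M_1 M_2$ respectively $Np = (pM_1)M_2$, both sides map bijectively to the same Gauss class group $\mathcal Q^1(N_0^2\Delta/d^2)/\Gamma_0(1)$. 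Composing one bijection with the inverse of the other yields that $\mathrm{id}$ restricts to a bijection on each piece. The symmetric argument, using the factorization $m_{\varrho'} = m_1 \cdot (pm_2)$ and the map $\tau\colon [aNp,b,c] \mapsto [aN,b,cp]$ on $\mathcal L_{Np}^{[p],c}$, handles case ii): here $p \mid c$, so $\tau$ produces $[aN,b,cp]$ with the factor of $p$ landing in the $c$-entry, consistent with $m_2' = pm_2$.

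I would then verify two compatibility conditions. First, that the maps $\mathrm{id}$ and $\tau$ genuinely send the indicated subsets into $\mathcal L_N^{[p]}(N_0^2\Delta)$: one must check that the resulting forms still satisfy the defining congruences (divisibility of the leading coefficient by $N$, the residue of $b$ modulo $2N$ being $\varrho$, and the constraint $\mathrm{gcd}(N_0,c)=1$ defining the `$\mathcal L$' sets rather than `$\mathcal Q$'), and that they have the correct discriminant and $\Gamma_0(N)$-primitivity. Since $p \nmid N_0$, reducing $b$ modulo $2N$ via $r_d$ is compatible with the constructions, and the condition $\mathrm{gcd}(m_2',N_0)=1$ transfers correctly because $p$ only affects $m_1'$ or $m_2'$ by a factor coprime to $N_0$. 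Second, that these assignments are well-defined on $\Gamma_0(Np)$-orbits and land in $\Gamma_0(N)$-orbits, which follows because $\Gamma_0(Np) \subseteq \Gamma_0(N)$ and the rewriting is implemented by conjugation by the diagonal matrices $\mathrm{diag}(p,1)$ (for $\mathrm{id}$) and $\mathrm{diag}(1,p)$ (for $\tau$) exactly as in the bijection \eqref{bijection-t}.

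The main obstacle I anticipate is the bookkeeping in the second paragraph: ensuring that the factorizations $(pm_1,m_2)$ and $(m_1,pm_2)$ are correctly and \emph{exclusively} matched to cases i) and ii) respectively, and that for each such matched pair one can choose coprime splittings $M_1M_2$ on both levels simultaneously so that the two invocations of Proposition \ref{prop:formsGKZ} are compatible. In particular one must confirm that the constraint $\mathrm{gcd}(pm_1,M_2) = \mathrm{gcd}(m_2,pM_1)=1$ (resp. with the roles swapped) can be met by the \emph{same} underlying splitting used at level $N$, after absorbing the extra factor of $p$ into $M_1$ or $M_2$ as dictated by the $\mathrm{id}$ or $\tau$ map. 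Once this matching is pinned down, the corollary follows formally from Proposition \ref{prop:formsGKZ} applied twice and composed, with no further computation.
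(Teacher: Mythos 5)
Your proposal is correct and follows essentially the same route as the paper's proof: fix $d$, $\varrho$, and an admissible factorization $m_{\varrho}=m_1m_2$, use Lemma \ref{lemma:mrho}(ii) to see that the two admissible lifts are $(pm_1,m_2)$ and $(m_1,pm_2)$, identify these with the subsets of type $a$ and type $c$ respectively, and conclude by composing the two bijections of Proposition \ref{prop:formsGKZ} (with the factor $p$ absorbed into $M_1$ or $M_2$ according to the case). The compatibility checks you flag as potential obstacles are exactly the ones the paper handles implicitly by choosing the splitting $Np=(pM_1)M_2$ or $M_1(pM_2)$, so no gap remains.
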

\begin{proof}
 Fix a positive integer $d$ with $d^2 \mid \Delta$, $p\nmid d$, and $\mathrm{gcd}(d,N_0)=1$. Fix also an element $\varrho \in R_N(N_0^2\Delta/d^2)$, let $\varrho' \in R_{Np}(N_0^2\Delta/d^2)$ be the unique element such that $r_d(\varrho') = \varrho$, and let $m_{\varrho} = m_1m_2$ be an admissible factorization. By virtue of Proposition \ref{prop:formsGKZ}, 
 both arrows in
 \[
  \mathcal L_{N,\varrho,m_1,m_2}^1(N_0^2\Delta/d^2)/\Gamma_0(N) \, \stackrel{\beta_{m_1,m_2}^{d,\varrho}}{\longrightarrow} \, \mathcal L^1(N_0^2\Delta/d^2)/\Gamma_0(1) \, \longleftarrow \, \mathcal L_{Np,\varrho',pm_1,m_2}^1(N_0^2\Delta/d^2)/\Gamma_0(Np), 
 \]
 are bijections, where the arrow on the right is $[aNp,b,c] \mapsto [apM_1,b,cM_2]$, and similarly both arrows in 
 \[
  \mathcal L_{N,\varrho,m_1,m_2}^1(N_0^2\Delta/d^2)/\Gamma_0(N) \, \stackrel{\beta_{m_1,m_2}^{d,\varrho}}{\longrightarrow} \, \mathcal L^1(N_0^2\Delta/d^2)/\Gamma_0(1) \, \longleftarrow \, \mathcal L_{Np,\varrho',m_1,pm_2}^1(N_0^2\Delta/d^2)/\Gamma_0(Np), 
 \]
 are bijections, where now the arrow on the right is $[aNp,b,c] \mapsto [aM_1,b,cpM_2]$. Noticing that 
 \[
  \mathcal L_{Np,\varrho',pm_1,m_2}^1(N_0^2\Delta/d^2) = \mathcal L_{Np,\varrho',pm_1,m_2}^{1,a}(N_0^2\Delta/d^2)
 \]
 and 
 \[
  \mathcal L_{Np,\varrho',m_1,pm_2}^1(N_0^2\Delta/d^2) = \mathcal L_{Np,\varrho',m_1,pm_2}^{1,c}(N_0^2\Delta/d^2),
 \]
 and applying the same argument for each pair $(m_1,m_2)$, shows that $\mathrm{id}\colon [aNp,b,c] \mapsto [(ap)N,b,c]$ and $\tau\colon [aNp,b,c] \mapsto [aN,b,cp]$ induce the bijections of the statement.
\end{proof}

\subsection{$\d$-th Shintani lifting of $p$-old forms}\label{sec:p-stab}

Fix an odd integer $N \geq 1$, an integer $k \geq 1$, and a Dirichlet character $\chi$ modulo $N$ as before. Let also $p$ be an odd prime not dividing $N$, $f \in S_{2k}^{new}(N, \chi^2)$ be a newform, and consider the $f$-isotypic subspace $S_{2k}(Np,\chi^2)[f] \subset S_{2k}(Np,\chi^2)$. This is the two-dimensional subspace spanned by $f$ and $V_pf$, where $V_p\colon S_{2k}(N,\chi^2) \to S_{2k}(Np,\chi^2)$ denotes the operator given by $V_pf (z) = f(pz)$. To describe the $\d$-th Shintani lifting of an arbitrary form in $S_{2k}(Np,\chi^2)[f]$, it therefore suffices to compute $\theta_{k,Np,\chi,\d}(f)$ and $\theta_{k,Np,\chi,\d}(V_pf)$. If $\alpha = \alpha_p(f)$ and $\beta = \beta_p(f)$ are the roots of the $p$-th Hecke polynomial of $f$, the subspace $S_{2k}(Np,\chi^2)[f]$ is also spanned by the forms
\[
 f_{\alpha} = f - \beta V_pf, \quad f_{\beta} = f - \alpha V_p f,
\]
on which the Atkin--Lehner operator $U_p$ acts as multiplication by $\alpha$ and $\beta$, respectively. We will be later interested in the $\d$-th Shintani lifting of $f_{\alpha}$, under the assumption that $f$ is ordinary at $p$ and $\alpha$ is chosen to be the $p$-unit root of the Hecke polynomial.

Continue to use $N_0$, $N_1$ $\chi_0$, $\epsilon$ with the same meaning and assumptions as in Section \ref{Sec:Shintani-Lifting}, and fix a fundamental discriminant $\d$ satisfying the following conditions:
\begin{itemize}
\item[(i)] $\mathrm{gcd}(N_0,\d) = 1$, $\epsilon(-1)^k\d > 0$;
\item[(ii)] $\d \equiv 0 \pmod p$;
\item[(iii)] $\theta_{k,N,\chi,\d}(f)\neq 0$.
\end{itemize}

Notice that we need to assume $\mathrm{gcd}(N_0,\d) = 1$ for the construction of the $\d$-th Shintani lifting.

\begin{remark}
With our working assumptions, the existence of such a discriminant $\d$ is guaranteed by non-vanishing results for special values of twisted $L$-series (combine, for example, \cite{BFH} and \cite[Th\'eor\`eme 4]{Waldspurger91}) together with Corollary \ref{cor:ad-Lvalue}. 
\end{remark}

Having made this choice for $\d$, we will compute now the $\d$-th Shintani liftings $\theta_{k,Np,\chi,\d}(f)$ and $\theta_{k,Np,\chi,\d}(V_pf)$, and express them in terms of $\theta_{k,N,\chi,\d}(f)$. We start dealing with $\theta_{k,Np,\chi,\d}(f)$.

Let $n\geq 1$ be an integer such that $\epsilon (-1)^k n \equiv 0,1 \pmod 4$. By definition of the $\d$-th Shintani lifting, recall first that the $n$-th Fourier coefficient of $\theta_{k,N,\chi,\d}(f)$ is given by the expression
\begin{align*}
a_n(\theta_{k,N,\chi,\d}(f)) & = C(k,\chi,\d) \sum_{0<t\mid N} \mu(t)\chi_{\d}\bar{\chi}_0(t)t^{-k-1}r_{k,Nt,\chi}(f;\d,\epsilon(-1)^knt^2) = \\
& = C(k,\chi,\d) r_{k,N,\chi}(f;\d,\epsilon(-1)^kn),
\end{align*}
where the second equality is due to the fact that $f$ is new (see equation \eqref{dShintani-withcharacter-fnew}). Similarly, the $n$-th Fourier coefficient of $\theta_{k,Np,\chi,\d}(f)$ is given by  
\[
 a_n(\theta_{k,Np,\chi,\d}(f)) = C(k,\chi,\d)\sum_{0<t\mid Np} \mu(t)\chi_{\d}\bar{\chi}_0(t)t^{-k-1}r_{k,Npt,\chi}(f;\d,\epsilon(-1)^knt^2).
\]
Using that $p$ does not divide $N$, that $\mu(pt) = -\mu(t)$, and that $\chi_{\d}\bar{\chi}_0(pt) = \chi_{\d}\bar{\chi}_0(p)\chi_{\d}\bar{\chi}_0(t)$, we may rewrite the sum in the previous expression as
\[
\sum_{0 < t \mid N} \mu(t)\chi_{\d}\bar{\chi}_0(t)t^{-k-1}\left(r_{k,Npt,\chi}(f;\d,\epsilon(-1)^knt^2) - \chi_{\d}\bar{\chi}_0(p)p^{-k-1}r_{k,Np^2t,\chi}(f;\d,\epsilon(-1)^knp^2t^2)\right).
\]
Our choice of $\d$ implies that $\chi_{\d}(p)=0$, so that we actually have 
\begin{equation}\label{anNp}
a_n(\theta_{k,Np,\chi,\d}(f)) = C(k,\chi,\d)\sum_{0 < t \mid N} \mu(t)\chi_{\d}\bar{\chi}_0(t)t^{-k-1} r_{k,Npt,\chi}(f;\d,\epsilon(-1)^knt^2).
\end{equation}

However, the fact that $f \in S_{2k}^{new}(N,\chi^2)$ is new implies that all the terms in \eqref{anNp} corresponding to non-trivial divisors of $N$ vanish. Indeed:

\begin{lemma}\label{lemma:fonlyt=1}
With the above notation and assumptions, the $\d$-th Shintani lifting of $f$ at level $Np$ satisfies
\[
a_n(\theta_{k,Np,\chi,\d}(f)) = C(k,\chi,\d) \cdot  r_{k,Np,\chi}(f;\d,\epsilon(-1)^kn).
\]
\end{lemma}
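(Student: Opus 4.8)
The plan is to show that in \eqref{anNp} every summand indexed by a divisor $t\mid N_1$ with $t>1$ vanishes, so that only the $t=1$ term survives; this immediately yields the stated identity. (Because of the factor $\bar\chi_0(t)$, the sum over $t\mid N$ in \eqref{anNp} is really a sum over $t\mid N_1$.) Thus it suffices to prove
\[
r_{k,Npt,\chi}(f;\d,\epsilon(-1)^knt^2)=0 \qquad \text{for every } t\mid N_1 \text{ with } t>1.
\]

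First I would transport the two identities \eqref{fkN-rkN} and \eqref{fkN-relation-t} to level $Np$: since $Np$ is odd with $(Np)_0=N_0$, $(Np)_1=N_1p$, and $\gcd(N_0,N_1p)=1$ (as $p\nmid N$), all the running hypotheses are preserved and the same derivations apply. The level-$Np$ form of \eqref{fkN-rkN} expresses
\[
r_{k,Npt,\chi}(f;\d,\epsilon(-1)^knt^2)=c\cdot\langle f,\,f_{N_0,N_1pt}^{k,\bar\chi_0}(z;\d,\epsilon(-1)^knt^2)\rangle
\]
with $c\neq 0$, and the level-$Np$ form of \eqref{fkN-relation-t}, applied with $t\mid N_1p$, rewrites the kernel form as
\[
f_{N_0,N_1pt}^{k,\bar\chi_0}(z;\d,\epsilon(-1)^knt^2)=\chi_\d(t^2)\,f_{N_0,N_1p/t}^{k,\bar\chi_0}(-tz;\d,\epsilon(-1)^kn).
\]
By property ii) of these functions the last expression equals $\chi_\d(t^2)\,(V_t\,f_{N_0,N_1p/t}^{k,\bar\chi_0})(z;\d,\epsilon(-1)^kn)$, i.e. the $V_t$-image of a cusp form of level $N_0\cdot N_1p/t=Np/t$.

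The crux is the resulting orthogonality, and this is the step that genuinely differs from the level-$N$ computation of Section \ref{Sec:Shintani-Lifting}. There one concludes the vanishing immediately from the fact that $f$ is new of level $N$, hence orthogonal to the entire oldspace. Here $f$ is \emph{not} new at level $Np$: its isotypic component $S_{2k}(Np,\chi^2)[f]$ is the two-dimensional span of $f$ and $V_pf$. Instead I would argue as follows. The form $V_t\,f_{N_0,N_1p/t}^{k,\bar\chi_0}$ lies in the span of the oldclasses $V_d h$ attached to newforms $h$ of level $M_h\mid Np/t$. A newform of level exactly $N$ can occur among these only if $N\mid Np/t$; but for $1<t\mid N_1$ one has $\gcd(t,p)=1$, so $t\nmid p$ and therefore $N\nmid Np/t$. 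Hence $f$ does not occur, the form is Petersson-orthogonal to $f$, and the displayed Petersson product vanishes. Consequently $r_{k,Npt,\chi}(f;\d,\epsilon(-1)^knt^2)=0$ for all $t>1$.

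Substituting these vanishings back into \eqref{anNp} leaves only the $t=1$ term, giving $a_n(\theta_{k,Np,\chi,\d}(f))=C(k,\chi,\d)\,r_{k,Np,\chi}(f;\d,\epsilon(-1)^kn)$, as claimed. I expect the only delicate point to be the orthogonality above: the argument hinges on the fact that the relevant oldform descends from the level $Np/t$, which for $t>1$ is \emph{not} divisible by $N$, so it lies entirely outside the $f$-isotypic component on which $f$ is supported; this is exactly where the hypothesis $t>1$ (together with $p\mid\d$, which already removed the $p$-part of the sum to reach \eqref{anNp}) is used.
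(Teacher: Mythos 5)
Your proposal is correct and follows the paper's strategy: reduce via \eqref{anNp} to the vanishing of $r_{k,Npt,\chi}(f;\d,\epsilon(-1)^knt^2)$ for $t>1$, use the level-$Np$ versions of \eqref{fkN-rkN} and \eqref{fkN-relation-t} together with property ii) to identify the relevant kernel as $\chi_{\d}(t^2)\,V_t f_{N_0,N_1p/t}^{k,\bar{\chi}_0}$, and then kill the Petersson product against $f$. The only place where you diverge from the paper is the orthogonality step: the paper computes $\langle f, V_t g\rangle_{Np} = \langle f, \mathrm{tr}^{Np}_N(V_tg)\rangle_N$ and uses the commutation $\mathrm{tr}^{Np}_N\circ V_t = V_t\circ \mathrm{tr}^{Np/t}_{N/t}$ to land in the $V_t$-image of $S_{2k}(N/t,\chi^2)$, which is old at level $N$ and hence orthogonal to the newform $f$; you instead stay at level $Np$ and invoke the orthogonal decomposition of $S_{2k}(Np,\chi^2)$ into newform-isotypic components, observing that $V_t$ of a level-$Np/t$ form only involves newforms of level dividing $Np/t$, and $N\nmid Np/t$ for $1<t\mid N_1$ since $\gcd(t,p)=1$. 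Both mechanisms are standard Atkin--Lehner theory and equally valid; the paper's trace computation requires checking the commutation identity, while yours requires the (equally standard) mutual orthogonality of distinct isotypic components at level $Np$, so neither buys much over the other. Your correct identification of where $t>1$ and $p\mid\d$ each enter is consistent with the paper.
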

\begin{proof}
In view of \eqref{anNp}, it suffices to show that $r_{k,Npt,\chi}(f;\d,\epsilon(-1)^knt^2) = 0$ for all divisors $t$ of $N$ with $t>1$. From \eqref{fkN-rkN} (with $N_1p$ in place of $N_1$) and \eqref{fkN-relation-t} we have 
\[
r_{k,Npt,\chi}(f;\d,\epsilon(-1)^knt^2) =  i_{Npt} \pi^{-1}\binom{2k-2}{k-1}^{-1}2^{2k-2}(|\d|nt^2)^{k-1/2} \cdot \langle f, V_t f_{N_0,N_1p/t}^{k,\bar{\chi}_0}(-;\d, \epsilon(-1)^kn)\rangle,
\]
and we observe that
\[
\langle f, V_t f_{N_0,N_1p/t}^{k,\bar{\chi}_0}(-;\d, \epsilon(-1)^kn)\rangle = \langle f, \mathrm{tr}^{Np}_N(V_t f_{N_0,N_1p/t}^{k,\bar{\chi}_0}(-;\d, \epsilon(-1)^kn))\rangle,
\]
where $\mathrm{tr}^{Np}_N\colon S_{2k}(Np,\chi^2) \to S_{2k}(N,\chi^2)$ is the usual trace operator. Furthermore, one can check that 
\[
\mathrm{tr}^{Np}_N(V_t f_{N_0,N_1p/t}^{k,\bar{\chi}_0}(-;\d, \epsilon(-1)^kn)) = V_t(\mathrm{tr}^{Np/t}_{N/t}f_{N_0,N_1p/t}^{k,\bar{\chi}_0}(-;\d, \epsilon(-1)^kn)),
\]
and hence the Petersson product on the right hand side of the above identity vanishes. Hence, the terms $r_{k,Npt,\chi}(f;\d,\epsilon(-1)^knt^2)$ vanish when $t > 1$ as we wanted to prove.
\end{proof}

Thanks to this lemma, it suffices to compute the quantities $r_{k,Np,\chi}(f;\d,\epsilon(-1)^kn)$ in order to determine the Fourier coefficients $a_n(\theta_{k,Np,\chi,\d}(f))$ of $\theta_{k,Np,\chi,\d}(f)$. Notice that the coefficient $a_n(\theta_{k,Np,\chi,\d}(f))$ is forced to vanish unless $n$ satisfies $\epsilon(-1)^k n \equiv 0,1 \pmod 4$, because $\theta_{k,Np,\chi,\d}(f)$ belongs to Kohnen's plus subspace. Changing slightly the notation, we must compute the $|\dd|$-th Fourier coefficients of $\theta_{k,Np,\chi,\d}(f)$, where $\dd$ is any discriminant with $\d\dd>0$ (note that this is equivalent to $\epsilon(-1)^k\dd > 0$, and that therefore one has $\epsilon(-1)^k|\dd| = \dd$). 

\begin{proposition}\label{prop:thetaN=thetaNp}
 Let $\dd$ be a discriminant such that $\d\dd>0$. Then we have
 \[
    a_{|\dd|}(\theta_{k,Np,\chi,\d}(f)) = a_{|\dd|}(\theta_{k,N,\chi,\d}(f)).
\]
In particular, it follows that $\theta_{k,Np,\chi,\d}(f) = \theta_{k,N,\chi,\d}(f)$. 
\end{proposition}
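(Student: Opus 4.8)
The plan is to peel off the combinatorial factor and reduce everything to an identity between cycle-integral sums. By Lemma \ref{lemma:fonlyt=1} we have $a_{|\dd|}(\theta_{k,Np,\chi,\d}(f)) = C(k,\chi,\d)\,r_{k,Np,\chi}(f;\d,\dd)$, while the new-form expression \eqref{dShintani-withcharacter-fnew} gives $a_{|\dd|}(\theta_{k,N,\chi,\d}(f)) = C(k,\chi,\d)\,r_{k,N,\chi}(f;\d,\dd)$, using that $\epsilon(-1)^k|\dd| = \dd$ because $\d\dd>0$. So it suffices to prove
\[
 r_{k,Np,\chi}(f;\d,\dd) \;=\; r_{k,N,\chi}(f;\d,\dd),
\]
which is a comparison of sums over $\mathcal L_{Np}(N_0^2\d\dd)/\Gamma_0(Np)$ and $\mathcal L_{N}(N_0^2\d\dd)/\Gamma_0(N)$. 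Since $p$ is odd, $p\mid\d$, and $\d$ is a fundamental discriminant, $p$ divides $\d$ exactly once; as $p\nmid N_0$, the exponent of $p$ in the discriminant $N_0^2\d\dd$ equals $1+\ord_p(\dd)$, so $p^2\nmid N_0^2\d\dd$ exactly when $p\nmid\dd$. This is the dichotomy addressed by Corollaries \ref{cor:nop2} and \ref{cor:p2}.

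The key local observation, used throughout, is that the summand $\omega_{\d}(Q)I_{k,\chi}(f,Q)$ attached to a form $Q=[A,b,c]$ with $Np\mid A$ does not depend on whether $Q$ is regarded at level $N$ or $Np$. Indeed $\omega_{\d}(Q)$ and $\chi_0(Q)$ are functions of the coefficients of $Q$ alone. For the cycle integral, every automorph of $Q$ has lower-left entry $Au$ (with $t^2-\mathrm{disc}(Q)u^2=4$), so $Np\mid A$ forces all automorphs into $\Gamma_0(Np)$; hence $\Gamma_0(N)_Q = \Gamma_0(Np)_Q$, the distinguished generator $\gamma_Q$ is the same, and $\int_{C_Q}f(z)Q(z,1)^{k-1}dz$ is literally unchanged. (When $\mathrm{disc}(Q)$ is a perfect square this is immediate, since $C_Q$ is then the explicit geodesic from $\omega_Q$ to $\omega_Q'$.) Thus, whenever a form is matched to itself, its contribution agrees at both levels.

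If $p\nmid\dd$, then $p^2\nmid N_0^2\d\dd$ and Corollary \ref{cor:nop2} provides the bijection $\mathrm{id}\colon \mathcal L_{Np}(N_0^2\d\dd)/\Gamma_0(Np)\to \mathcal L_N(N_0^2\d\dd)/\Gamma_0(N)$, which is the inclusion $\mathcal L_{Np}\hookrightarrow\mathcal L_N$; by the previous paragraph the two sums agree term by term. If $p\mid\dd$, then $p^2\mid N_0^2\d\dd$ and we use the decomposition $\mathcal L_{Np}(N_0^2\d\dd)=\mathcal L_{Np}^{(p)}\sqcup\mathcal L_{Np}^{[p],a}\sqcup\mathcal L_{Np}^{[p],c}$. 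Forms in $\mathcal L_{Np}^{(p)}$ have content divisible by $p$, so $\omega_{\d}=0$ as $p\mid\d$; the analogous part of $\mathcal L_N$ vanishes likewise, so both non-$p$-primitive pieces drop out. For a form in $\mathcal L_{Np}^{[p],c}$ one has $p\mid b$ (since $p^2$ divides its discriminant) and $p\mid c$, so $p$ divides all three coefficients and again $\omega_{\d}=0$; this piece contributes nothing. Finally Corollary \ref{cor:p2} gives $\mathrm{id}\colon\mathcal L_{Np}^{[p],a}/\Gamma_0(Np)\to\mathcal L_N^{[p]}/\Gamma_0(N)$, and since the surviving forms still satisfy $Np\mid A$ the remaining sum matches the level-$N$ sum term by term. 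In both cases $r_{k,Np,\chi}(f;\d,\dd)=r_{k,N,\chi}(f;\d,\dd)$.

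This proves the coefficient identity. For the final assertion, $\theta_{k,Np,\chi,\d}(f)$ and $\theta_{k,N,\chi,\d}(f)$ both lie in Kohnen's plus space, hence are supported exactly on indices $m$ with $\epsilon(-1)^km\equiv 0,1\pmod 4$; every such $m$ equals $|\dd|$ for the discriminant $\dd=\epsilon(-1)^km$, which satisfies $\d\dd>0$ because $\epsilon(-1)^k\d>0$ and $m>0$. The coefficient identity therefore holds at all supported indices, and the two $q$-expansions coincide. I expect the only delicate point to be the level-independence of the cycle integrals established in the second paragraph: one must ensure that enlarging $\Gamma_0(N)$ to $\Gamma_0(Np)$ neither shrinks the stabilizer of $Q$ (which would rescale the period by the index) nor reorients the geodesic, and this is precisely what $Np\mid A$ guarantees. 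The vanishing statements are then routine $p$-adic bookkeeping forced by the hypothesis $p\mid\d$.
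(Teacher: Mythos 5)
Your proof is correct and follows essentially the same route as the paper: the reduction via Lemma \ref{lemma:fonlyt=1}, the dichotomy $p\nmid\dd$ versus $p\mid\dd$, and the use of Corollaries \ref{cor:nop2} and \ref{cor:p2} together with the vanishing of $\omega_{\d}$ on non-$p$-primitive forms all match the paper's argument. The only (welcome) differences are that you make explicit the level-independence of the cycle integrals by checking that the stabilizers in $\Gamma_0(N)$ and $\Gamma_0(Np)$ coincide for the matched forms, a point the paper leaves implicit, and that you kill the $\mathcal L_{Np}^{[p],c}$-piece by a direct divisibility argument on the coefficients rather than via the transformation rule \eqref{genuschar-relation-t}.
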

\begin{proof}
By the previous lemma, this is equivalent to show that $r_{k,Np,\chi}(f;\d,\dd) = r_{k,N,\chi}(f;\d,\dd)$, so it suffices to compute 
\[
    r_{k,Np,\chi}(f;\d,\dd) = \sum_{Q \in \mathcal L_{Np}(N_0^2\d\dd)/\Gamma_0(Np)} \omega_{\d}(Q)I_{k,\chi}(f,Q).
\]
Suppose first that $p \nmid \dd$, so that $p$ divides exactly $N_0^2\d\dd$. By Corollary \ref{cor:nop2}, we can use the identity map $[aNp,b,c] \mapsto [(ap)N,b,c]$ to induce a bijection from $\mathcal L_{Np}(N_0^2\d\dd)/\Gamma_0(Np)$ to $\mathcal L_{N}(N_0^2\d\dd)/\Gamma_0(N)$. Therefore, we have 
\[
r_{k,Np,\chi}(f;\d,\dd) = \sum_{Q \in \mathcal L_{N}(N_0^2\d\dd)/\Gamma_0(N)} \omega_{\d}(Q)I_{k,\chi}(f,Q) = r_{k,N,\chi}(f;\d,\dd),
\]
and it follows that $a_{|\dd|}(\theta_{k,Np,\chi,\d}(f)) = a_{|\dd|}(\theta_{k,N,\chi,\d}(f))$.

Suppose now that $p \mid \dd$, hence $p^2$ divides $\d\dd$. In this case, we use the decomposition 
\[
 \mathcal L_{Np}(N_0^2\d \dd)/\Gamma_0(Np) = \mathcal L_{Np}^{(p)}(N_0^2\d \dd)/\Gamma_0(Np) \sqcup \mathcal L_{Np}^{[p]}(N_0^2\d \dd)/\Gamma_0(Np)
\]
already introduced in the previous paragraph. Since $\omega_{\d}(Q) = 0$ for all forms $Q \in \mathcal L_{Np}^{(p)}(N_0^2\d \dd)$, we see that the sum over the first set does not contribute to $r_{k,Np,\chi}(f;\d,\dd)$. Therefore, 
\[
r_{k,Np,\chi}(f;\d,\dd) = \sum_{Q \in \mathcal L_{Np}^{[p],a}(N_0^2\d\dd)/\Gamma_0(Np)} \omega_{\d}(Q)I_{k,\chi}(f,Q) + \sum_{Q \in \mathcal L_{Np}^{[p],c}(N_0^2\d\dd)/\Gamma_0(Np)} \omega_{\d}(Q)I_{k,\chi}(f,Q),
\]
using the notations from the previous paragraph. Applying now Corollary \ref{cor:p2}, the second sum equals 
\[
	\sum_{Q \in \mathcal L_{N}^{[p]}(N_0^2\d\dd)/\Gamma_0(N)}\omega_{\d}(\tau^{-1}(Q))I_{k,\chi}(f,\tau^{-1}(Q)) = 0.
\]
Since $\omega_{\d}(\tau^{-1}(Q))= \chi_{\dd}(p)\omega_{\d}(Q) = 0$ for all $Q \in \mathcal L_{N}^{[p]}(N_0^2\d\dd)$, the above expression simplifies to 
\begin{equation}\label{eqn:Np-a}
	r_{k,Np,\chi}(f;\d,\dd) = \sum_{Q \in \mathcal L_{Np}^{[p],a}(N_0^2\d\dd)/\Gamma_0(Np)} \omega_{\d}(Q)I_{k,\chi}(f,Q) = \sum_{Q \in \mathcal L_{N}^{[p]}(N_0^2\d\dd)/\Gamma_0(N)} \omega_{\d}(Q)I_{k,\chi}(f,Q),
\end{equation}
where the last equality holds because of Corollary \ref{cor:p2}. This sum equals $r_{k,N,\chi}(f;\d,\dd)$, since we can replace $\mathcal L_{N}^{[p]}(N_0^2\d\dd)$ by $\mathcal L_{N}(N_0^2\d\dd)$ using again that $\omega_{\d}(Q) = 0$ for all $Q \in \mathcal L_{N}^{(p)}(N_0^2\d\dd)$, which implies the result. 
\end{proof}

Since $f$ is an arbitrary newform, from the above proposition we directly obtain the following:

\begin{corollary} \label{cor:compatibility} The following diagram commutes:
\[
    \xymatrix{
        S_{2k}^{new}(N,\chi^2) \ar@{^{(}->}[rr] \ar[d]^{\theta_{k,N,\chi,\d}} & & S_{2k}(Np,\chi^2) \ar[d]^{\theta_{k,Np,\chi,\d}} \\
        S_{k+1/1}^{+,new}(N,\chi) \ar@{^{(}->}[rr] & &S_{k+1/2}^+(Np,\chi). 
    }
\]
\end{corollary}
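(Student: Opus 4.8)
The plan is to deduce the commutativity of the square directly from Proposition \ref{prop:thetaN=thetaNp} by a linearity argument, since that proposition already carries out the substantive comparison. First I would observe that both routes around the square are $\C$-linear maps $S_{2k}^{new}(N,\chi^2) \to S_{k+1/2}^+(Np,\chi)$: the horizontal arrows are the natural inclusions of forms of level $N$ into level $Np$ (and, on the half-integral side, of the new plus subspace into the full plus subspace), which are the identity on $q$-expansions, while the vertical arrows are the $\d$-th Shintani liftings, which are linear by their very definition in \eqref{dShintani-withcharacter} as sums of the cycle integrals $r_{k,\cdot,\chi}(\cdot;\d,\cdot)$. Since a linear map is determined by its values on a spanning set, it suffices to check that the two composites agree on a basis.

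Next I would invoke the theory of newforms \emph{à la} Atkin--Lehner--Li, available under our running hypotheses on $N$ and $\chi$, to write $S_{2k}^{new}(N,\chi^2)$ as the span of normalized newforms. For each such newform $f$, Proposition \ref{prop:thetaN=thetaNp} asserts precisely that $\theta_{k,Np,\chi,\d}(f) = \theta_{k,N,\chi,\d}(f)$ as elements of $S_{k+1/2}^+(Np,\chi)$; equivalently, the image of $f$ under the two composites is one and the same element. Because the bottom inclusion $S_{k+1/2}^{+,new}(N,\chi) \hookrightarrow S_{k+1/2}^+(Np,\chi)$ is the identity on $q$-expansions, this equality is exactly the commutativity of the square evaluated at $f$.

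Finally, by linearity of both composites, the two maps coincide on all of $S_{2k}^{new}(N,\chi^2)$, which is the desired conclusion.

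I do not expect a genuine obstacle here, as the heavy lifting is done in Proposition \ref{prop:thetaN=thetaNp} (and, upstream, in the orbit comparisons of Corollaries \ref{cor:nop2} and \ref{cor:p2}). The only minor points to pin down are that both Shintani liftings are honestly $\C$-linear and that the horizontal arrows are the stated inclusions; one should also note, in order to justify the left-hand codomain as drawn, that $\theta_{k,N,\chi,\d}$ carries newforms into $S_{k+1/2}^{+,new}(N,\chi)$, which is part of the Hecke-equivariant Shimura--Shintani picture recalled in the introduction.
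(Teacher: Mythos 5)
Your proposal is correct and follows essentially the same route as the paper, which deduces the corollary immediately from Proposition \ref{prop:thetaN=thetaNp} applied to an arbitrary newform $f$, with linearity and the spanning of $S_{2k}^{new}(N,\chi^2)$ by newforms left implicit. The extra details you supply (linearity of the liftings, the inclusions being the identity on $q$-expansions) are exactly the unstated justifications.
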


\begin{corollary}
 If $\dd$ is a discriminant such that $\d\dd > 0$, then 
 \[
  a_{|\dd|}(\theta_{k,Np,\chi,\d}(V_pf)) =  p^{-k}\bar{\chi}(p)\left(\frac{\dd}{p}\right) a_{|\dd|}(\theta_{k,N,\chi,\d}(f)) + a_{|\dd|/p^2}(\theta_{k,N,\chi,\d}(f)),
 \]
 where $a_{|\dd|/p^2}(\theta_{k,N,\chi,\d}(f)) = 0$ when $p^2\nmid \d'$. In particular, if $p^2\nmid \dd$, this reduces to
 \[
  a_{|\dd|}(\theta_{k,Np,\chi,\d}(V_pf)) =  p^{-k}\bar{\chi}(p)\left(\frac{\dd}{p}\right) a_{|\dd|}(\theta_{k,N,\chi,\d}(f)).
 \]
\end{corollary}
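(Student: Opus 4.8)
\emph{Setup of the strategy.} The plan is to follow the template used above for $\theta_{k,Np,\chi,\d}(f)$: first collapse the defining sum to a single sum of cycle integrals, and then transport those integrals from level $Np$ down to level $N$ via the substitution $z\mapsto pz$, letting the quadratic-form dictionary of Section~\ref{sec:discussion-quadforms} control the combinatorics.

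\emph{Step 1 (reduction).} I would apply \eqref{dShintani-withcharacter} at level $Np$ to the form $V_pf$ and perform exactly the manipulation leading to \eqref{anNp}: splitting each divisor of $N_1p$ into a part prime to $p$ and its $p$-multiple, the $p$-multiple terms carry the factor $\chi_{\d}(p)=\left(\frac{\d}{p}\right)=0$ (here $p\mid\d$) and drop out. For the surviving divisors $t>1$ I would invoke the analogue of Lemma~\ref{lemma:fonlyt=1}: the relevant Petersson product is $\langle V_pf,\,V_t f_{N_0,N_1p/t}^{k,\bar\chi_0}(-;\d,\dd)\rangle$, and it vanishes because $V_pf$, like $f$, is new at every prime dividing $N$ and is therefore orthogonal to the image of $V_t$. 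This leaves only $t=1$, so $a_{|\dd|}(\theta_{k,Np,\chi,\d}(V_pf))=C(k,\chi,\d)\,r_{k,Np,\chi}(V_pf;\d,\dd)$.

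\emph{Step 2 (descent of the cycle integrals).} For $Q=[A,b,c]\in\mathcal L_{Np}(N_0^2\d\dd)$ one has $Np\mid A$, and I would change variables $w=pz$ in $\int_{C_Q}f(pz)Q(z,1)^{k-1}\,dz$. Since $Q\circ\bigl(\begin{smallmatrix}1&0\\0&p\end{smallmatrix}\bigr)=[A,bp,cp^2]=p\,\tau(Q)$ with $\tau(Q)=[A/p,b,cp]$ the map of Corollaries~\ref{cor:nop2}--\ref{cor:p2} (legitimate because $p\mid A$), pulling the content $p$ out of $[A,bp,cp^2]$ produces $\int_{C_Q}f(pz)Q(z,1)^{k-1}\,dz=p^{-k}\int_{C_{\tau(Q)}}f(w)\,\tau(Q)(w,1)^{k-1}\,dw$, a level-$N$ cycle integral of the \emph{same} discriminant $N_0^2\d\dd$. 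Recording also $\chi_0(Q)=\bar\chi_0(p)\chi_0(\tau(Q))$ (and $\bar\chi(p)=\bar\chi_0(p)$ since $p\nmid N$), this rewrites $r_{k,Np,\chi}(V_pf;\d,\dd)$ as $p^{-k}\bar\chi(p)$ times a weighted sum of level-$N$ cycle integrals of $f$, with weights $\omega_{\d}(Q)$, transported along $\tau$.

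\emph{Step 3 (the two cases).} If $p\nmid\dd$, then $\tau$ is a bijection onto $\mathcal L_N(N_0^2\d\dd)/\Gamma_0(N)$ by Corollary~\ref{cor:nop2}, and the genus characters satisfy $\omega_{\d}(Q)=\chi_{\dd}(p)\,\omega_{\d}(\tau(Q))$ with $\chi_{\dd}(p)=\left(\frac{\dd}{p}\right)$, exactly as in the proof of Proposition~\ref{prop:thetaN=thetaNp}; the sum then collapses to $p^{-k}\left(\frac{\dd}{p}\right)\bar\chi(p)\,r_{k,N,\chi}(f;\d,\dd)$, which is the first term, the second being absent. If $p\mid\dd$, then $p^2\mid N_0^2\d\dd$ and I would split $\mathcal L_{Np}(N_0^2\d\dd)$ as in Section~\ref{sec:discussion-quadforms}. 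The non-$p$-primitive forms contribute nothing ($\omega_{\d}=0$), and the ``$c$''-type forms also contribute nothing since there $\tau(Q)$ is $p$-primitive and $\omega_{\d}(Q)=\left(\frac{\dd}{p}\right)\omega_{\d}(\tau(Q))=0$. Only the ``$a$''-type forms (those with $p^2\mid A$) survive: for these $\tau(Q)=p\,R$ with $R$ of discriminant $N_0^2\d\dd/p^2$, so that $\omega_{\d}(R)$ makes sense precisely when $p^2\mid\dd$; pulling out the content $p$ once more and matching classes through the bijection \eqref{bijection-t} (with $t=p$) should identify their contribution with $r_{k,N,\chi}(f;\d,\dd/p^2)$, i.e. with $a_{|\dd|/p^2}(\theta_{k,N,\chi,\d}(f))$, which vanishes when $p^2\nmid\dd$.

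\emph{Main obstacle.} The real work is isolating the residual ``$a$''-type contribution in Step~3 and showing it equals $r_{k,N,\chi}(f;\d,\dd/p^2)$ with coefficient exactly $1$. This forces one to (i) control the genus-character factor relating $\omega_{\d}(Q)$ and $\omega_{\d}(R)$ through the explicit formula \eqref{genuschar-formula}, where a stray Jacobi symbol at $p$ appears, and (ii) compare the geodesic cycles $C_Q$ and $C_R$ at the two levels — their stabilizers, orientations, and the multiplicity with which $C_Q$ covers $C_R$ under $z\mapsto pz$ — so that all powers of $p$ and Jacobi symbols introduced along the way cancel. The transparent case $p\nmid\dd$ pins down every normalization, so this is ultimately a bookkeeping problem, but it is the crux of the argument.
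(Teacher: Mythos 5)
Your route is genuinely different from the paper's, and as written it is not complete. The paper derives the corollary in a few lines from Hecke equivariance: since $f$ is new, $U_pf=a_p(f)f-\chi^2(p)p^{2k-1}V_pf$ in $S_{2k}(Np,\chi^2)$; applying $\theta_{k,Np,\chi,\d}$, using $\theta_{k,Np,\chi,\d}(f)=\theta_{k,N,\chi,\d}(f)$ (Corollary \ref{cor:compatibility}) together with $\theta(T_pf)=T(p^2)\theta(f)$ and $\theta(U_pf)=U(p^2)\theta(f)$, and then reading off the $|\dd|$-th coefficient from the explicit formula \eqref{eqn:Tp-Shimura} for $T(p^2)$ and the formula for $U(p^2)$ gives the statement immediately. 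None of the quadratic-form combinatorics is needed once Proposition \ref{prop:thetaN=thetaNp} is in hand.

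By contrast, your direct computation defers exactly the steps that carry the content. Two of them deserve to be flagged as genuine gaps rather than bookkeeping. First, the genus-character relation $\omega_{\d}(Q)=\chi_{\dd}(p)\,\omega_{\d}(\tau(Q))$ cannot be obtained from a common represented value or from \eqref{genuschar-relation-t}: because $p\mid\d$, every value represented by $Q$ at a point where $\tau(Q)$ represents $s$ is $ps$, which is never coprime to $\d$, and \eqref{genuschar-relation-t} degenerates to $0=0$ since $\chi_{\d}(p^2)=0$. One is forced into the explicit product formula \eqref{genuschar-formula} and a quadratic-reciprocity computation to produce the factor $\left(\frac{\dd}{p}\right)$; this is precisely where the formula's arithmetic content lives and it is not carried out. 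Second, the identification $\int_{C_Q}f(pz)Q(z,1)^{k-1}dz=p^{-k}\int_{C_{\tau(Q)}}f(w)\tau(Q)(w,1)^{k-1}dw$ requires comparing the stabilizers $\Gamma_0(Np)_Q^+$ and $\Gamma_0(N)_{\tau(Q)}^+$ (both cycles are defined via the chosen generator $\gamma_Q$, not as arcs of $S_Q$); if the conjugate of $\gamma_Q$ were a proper power of $\gamma_{\tau(Q)}$ the integral would acquire an index factor. The index is in fact $1$ here because both stabilizers are cut out by the same Pell automorphs, but this must be said. A smaller point: in Step 1, $V_pf$ is not ``new at every prime dividing $N$''; the correct reason the products $\langle V_pf, V_t(\cdot)\rangle$ vanish for $1<t\mid N_1$ is that $V_pf$ lies in the $f$-isotypic component, which meets the image of $V_t$ only for $t\mid p$. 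The conclusion of Step 1 is nevertheless correct. In summary, your strategy can likely be pushed through (and would in effect reprove the Hecke-equivariance of the lifting at $p$ by hand), but the proposal leaves its crux unproved, whereas the paper's argument makes the whole computation unnecessary.
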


\begin{proof}
First of all, since $f \in S_{2k}^{new}(N,\chi^2)$ it is well-known that 
\[
 T_p f = U_p f + \chi^2(p) p^{2k-1}V_p f.
\]
Applying the Hecke-equivariance of the $\d$-th Shintani lifting and the commutative diagram of Corollary \ref{cor:compatibility}, it follows that 
\[
	T(p^2) \theta_{k,N,\chi,\d}(V_pf) = U(p^2) \theta_{k,N,\chi,\d}(V_pf)+ \chi^2(p) p^{2k-1} \theta_{k,Np,\chi,\d}(V_pf),
\]
where $T(p^2) = T_{k+1/2,N,\chi}(p^2)$ and $U(p^2)$ are the operators defined at the end of Section \ref{sec:notation}. Using the explicit expression for $T(p^2)$ in equation \eqref{eqn:Tp-Shimura} and $U(p^2)$, one deduces the result.
\end{proof}

For later reference, let us collect in the following corollary the explicit expression for the $\d$-th Shintani lifting of $f_{\alpha} = f - \beta V_pf$.

\begin{corollary}\label{cor:lift-falpha}
 If $\dd$ is a discriminant such that $\d\dd > 0$, then 
 \[
  a_{|\dd|}(\theta_{k,Np,\chi,\d}(f_{\alpha})) = (1 - \beta \chi_{\dd}\bar{\chi}(p) p^{-k})a_{|\dd|}(\theta_{k,N,\chi,\d}(f)) - \beta a_{|\dd|/p^2}(\theta_{k,N,\chi,\d}(f)).
 \]
 In particular, if $p^2\nmid \dd$, then 
 \[
  a_{|\dd|}(\theta_{k,Np,\chi,\d}(f_{\alpha})) = (1 - \beta \chi_{\dd}\bar{\chi}(p) p^{-k})a_{|\dd|}(\theta_{k,N,\chi,\d}(f)).
 \]
\end{corollary}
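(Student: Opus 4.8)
The plan is to exploit the linearity of the $\d$-th Shintani lifting together with the two preceding results, since $f_{\alpha} = f - \beta V_p f$ by definition. Thus $\theta_{k,Np,\chi,\d}(f_{\alpha}) = \theta_{k,Np,\chi,\d}(f) - \beta\,\theta_{k,Np,\chi,\d}(V_pf)$, and I would extract the $|\dd|$-th Fourier coefficient of both sides, the hypothesis $\d\dd>0$ being exactly what guarantees that these coefficients are governed by the formulas above (recall that $\theta_{k,Np,\chi,\d}(f_{\alpha})$ lies in Kohnen's plus space, so the only potentially nonzero coefficients are those indexed by $|\dd|$ with $\epsilon(-1)^k\dd>0$, equivalently $\d\dd>0$).

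First I would invoke Proposition \ref{prop:thetaN=thetaNp} to replace the coefficient $a_{|\dd|}(\theta_{k,Np,\chi,\d}(f))$ by $a_{|\dd|}(\theta_{k,N,\chi,\d}(f))$. Next I would substitute the expression for $a_{|\dd|}(\theta_{k,Np,\chi,\d}(V_pf))$ furnished by the previous corollary, namely
\[
a_{|\dd|}(\theta_{k,Np,\chi,\d}(V_pf)) = p^{-k}\bar{\chi}(p)\left(\frac{\dd}{p}\right) a_{|\dd|}(\theta_{k,N,\chi,\d}(f)) + a_{|\dd|/p^2}(\theta_{k,N,\chi,\d}(f)).
\]
Identifying the Kronecker symbol $\left(\frac{\dd}{p}\right)$ with the value $\chi_{\dd}(p)$ of the quadratic character attached to $\dd$, combining the two substitutions, and collecting the coefficient of $a_{|\dd|}(\theta_{k,N,\chi,\d}(f))$ produces the factor $(1 - \beta\chi_{\dd}\bar{\chi}(p)p^{-k})$, while the remaining term is $-\beta\, a_{|\dd|/p^2}(\theta_{k,N,\chi,\d}(f))$. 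This is precisely the claimed formula, and the ``in particular'' case is then immediate, since $a_{|\dd|/p^2}(\theta_{k,N,\chi,\d}(f)) = 0$ whenever $p^2 \nmid \dd$.

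As for the main obstacle: there is essentially none at this stage, since all the substantive work has already been carried out in Proposition \ref{prop:thetaN=thetaNp} (which rests on the quadratic-form classification of Corollaries \ref{cor:nop2} and \ref{cor:p2}) and in the preceding corollary (via the Hecke relation $T_p f = U_p f + \chi^2(p)p^{2k-1}V_p f$ together with the explicit formula for $T(p^2)$ in \eqref{eqn:Tp-Shimura}). The present corollary is purely a bookkeeping step that assembles these ingredients by linearity; the only minor points to keep straight are the identification $\left(\frac{\dd}{p}\right) = \chi_{\dd}(p)$ and the sign conventions, both of which match the cited formulas directly.
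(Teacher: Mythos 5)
Your proposal is correct and is exactly the argument the paper intends: the corollary is stated as a direct consequence of linearity applied to $f_{\alpha} = f - \beta V_p f$, combining Proposition \ref{prop:thetaN=thetaNp} with the preceding corollary on $a_{|\dd|}(\theta_{k,Np,\chi,\d}(V_pf))$, with the identification $\left(\frac{\dd}{p}\right) = \chi_{\dd}(p)$. No further comment is needed.
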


\begin{remark} \label{rmk:coeff-pstabilization}
 Since $a_{|\dd|}(\theta_{k,N,\chi,\d}(f)) = a_{|\dd|}(\theta_{k,Np,\chi,\d}(f))$, if $p^2 \nmid \dd$ we can rewrite the last identity in Corollary \ref{cor:lift-falpha} as
\[
     a_{|\dd|}(\theta_{k,Np,\chi,\d}(f_{\alpha})) = (1-\beta \chi_{\dd}\bar{\chi}(p)p^{-k})a_{|\dd|}(\theta_{k,Np,\chi,\d}(f)).
\]
In this form, this formula holds true also when $f \in S_{2k}^{new}(Np,\chi^2)$ is new of level $Np$ (hence its ordinary $p$-stabilization is $f$ itself, and $\beta = 0$).

\end{remark}

\begin{remark}
The above corollary should be read as complementary to \cite[Proposition 2.10]{Mak17}, as he imposes the condition $\mathrm{gcd}(Np,\d)=1$ whereas we choose $\d$ such that $\mathrm{gcd}(N,\d)=1$ and $\d \equiv 0 \pmod p$. 
\end{remark}

\section{Modular symbols and their interpolation}  \label{Sec:p-adic}

As a preparation for the next section, we now review some basic material on the $p$-adic interpolation of the modular symbols, and set the conventions of Hida theory that will be used later. In most of the discussion, we follow closely and adapt the approach in \cite{GS93, Stevens}.

\subsection{Modular symbols}

Let $M\geq 1$ be an integer and let $\Gamma_0(M)$. Let $\Delta^0 := \Div^0(\P^1(\Q))$ be the group of degree 0 divisors on the set of rational cusps of Poincar\'e's upper half plane $\mathfrak H$. The congruence group $\Gamma_0(M)$ acts by linear fractional transformations on $\Delta^0$. Let $V$ be a right $\Z[1/6][\Gamma_0(M)]$-module. There is a natural right action of $\Gamma_0(M)$ on the set $\mathrm{Hom}(\Delta^0,V)$ of additive homomorphisms from $\Delta^0$ to $V$, defined by the rule
\[
(\Phi|\gamma)(D) := \Phi(\gamma D)|\gamma, \qquad \gamma\in \Gamma_0(M), \, D \in \Delta^0.
\]
The group of $V$-valued modular symbols over $\Gamma_0(M)$ is the set of $\Phi:\Delta^0\to V$ such that $\Phi | \gamma = \Phi$ for all $\gamma \in \Gamma_0(M)$, i.e.:
\[
\Symb_{\Gamma_0(M)}(V) := \mathrm{Hom}_{\Gamma_0(M)}(\Delta^0,V) = \mathrm{Hom}(\Delta^0,V)^{\Gamma_0(M)}.
\]

Let now $G_0(M)$ denote the semigroup of two-by-two matrices $\gamma = (\begin{smallmatrix}a & b \\ c & d\end{smallmatrix}) \in M_2(\Z)$ such that $\mathrm{gcd}(a,M)=1$ and $c\equiv 0 \pmod M$. If the action of $\Gamma_0(M)$ on $V$ extends to an action of $G_0(M)$, then $\Symb_{\Gamma_0(M)}(V)$ inherits a natural action of Hecke operators.

The element $\iota = \mathrm{diag}(1,-1) \in G_0(M)$ induces an involution on $\Symb_{\Gamma_0(M)}(V)$. Notice that $\iota$ acts by $\Phi \mapsto \Phi|\iota$, with $(\Phi|\iota)(D) = \Phi(\iota D)|\iota$. Under the assumption that $V$ is a $\Z[1/2]$-module, the action of $\iota$ decomposes the modular symbol $\Phi$ as $\Phi = \Phi^+ + \Phi^-$, where $\Phi^{\pm}:=\frac{1}{2}(\Phi\pm \iota \Phi) \in \Symb_{\Gamma_0(M)}(V)^{\pm}$ are such that $\iota \Phi^\pm = \pm \Phi^\pm$, i.e.:
\[
\Symb_{\Gamma_0(M)}(V) = \Symb_{\Gamma_0(M)}(V)^+ \oplus \Symb_{\Gamma_0(M)}(V)^-.
\]

Let us now focus our discussion on some special choices of $V$. Fix an integer $r\geq 0$ and a commutative ring $R$ in which $2r!$ is invertible (e.g. of characteristic zero). Let $\Sym^r(R^2)$ denote the $R$-module of homogeneous `divided powers polynomials' of degree $r$ in $X$, $Y$ over $R$ generated by the monomials
\[
\frac{X^n}{n!}\frac{Y^{r-n}}{(r-n)!} \quad \text{with } 0 \leq n \leq r.
\]
Similarly, let $\Sym^r(R^2)^*$ be the $R$-module of homogeneous polynomials of degree $r$ in $X$, $Y$ over $R$, generated by the monomials $X^n Y^{r-n}$ with $0 \leq n\leq r$. Both $\Sym^r(R^2)$ and $\Sym^r(R^2)^*$ are equipped with a natural action of $M_2(R)$, by the rule 
\[
(F|\gamma)(X,Y) := F((X,Y){}^t\gamma)
\]
where $\gamma \mapsto {}^t\gamma$ denotes the usual transpose.

Let us write $\langle \cdot, \cdot \rangle_r$ for the unique perfect pairing 
\[
\langle \cdot, \cdot \rangle_r\colon \Sym^r(R^2)\times \Sym^r(R^2)^* \, \longrightarrow \, R
\]
satisfying 
\[
\left\langle  \frac{X^iY^{r-i}}{i!(r-i)!}, X^{r-j}Y^{j} \right\rangle_{ \hspace{-3pt} r} = (-1)^j \delta_{ij},
\]
where $\delta_{ij}$ is the usual Kronecker's delta function. This pairing satisfies in addition the properties 
\[
\left\langle  \frac{(aY-bX)^r}{r!}, P(X,Y) \right\rangle_{\hspace{-3pt} r} = P(a,b) \, \, \, \forall \, a,b \in R, \quad \langle P_1|\gamma, P_2|\gamma \rangle_r = \det(\gamma)^r\langle P_1,P_2\rangle.
\]

From now on, we will write $L_r(R)$ (resp. $L_r^*(R)$) for the $R[\Gamma_0(M)]$-module $\Sym^r(R^2)$ (resp. $\Sym^r(R^2)^*$) equipped with the action of $\Gamma_0(M)$ induced by the above described action. If in addition $\chi$ is an $R$-valued Dirichlet character modulo $M$, we denote by $L_{r,\chi}(R)$ (resp. $L_{r,\chi}^*(R)$) the same underlying $R[\Gamma_0(M)]$-module as $L_r(R)$ (resp. $L_r^*(R)$) but with the action of $\Gamma_0(M)$ twisted by $\chi^{-1}$ (resp. $\chi$). That is, for an element
\[
\gamma = \left(\begin{array}{cc} a & b \\ c & d \end{array}\right) \in \Gamma_0(M)
\]
one has 
\[
(F|\gamma)(X,Y) := \begin{cases}
\chi(a) \cdot F((X,Y){}^t\gamma) & \text{if } F \in L_{r,\chi}(R),\\
\chi(d) \cdot F((X,Y){}^t\gamma) & \text{if } F \in L_{r,\chi}^*(R).
\end{cases}
\]
If $R$ is an $R'$-algebra, we say that a modular symbol $\varphi \in  \Symb_{\Gamma_0(M)}(L_{r,\chi}(R))$ is defined over $R'$ if it takes values in  $L_{r,\chi}(R')$, i.e. if it lies in the image of the natural map
\[
\Symb_{\Gamma_0(M)}(L_{r,\chi}(R')) \hookrightarrow  \Symb_{\Gamma_0(M)}(L_{r,\chi}(R)).
\]

If $f \in S_{k}(M,\chi)$ is a cusp form of weight $k$, level $\Gamma_0(M)$, and Nebentype character $\chi$, then the $L_{k-2,\chi}(\C)$-valued differential form 
\[
\omega_f := \frac{1}{(k-2)!} f(\tau)(\tau Y - X)^{k-2} d\tau
\]
on $\mathfrak H$ satisfies $\gamma^* \omega_f|\gamma = \omega_f$ for all $\gamma \in \Gamma_0(M)$. The additive map $\psi_f\colon \Delta^0 \to L_{k-2,\chi}(\C)$ induced by 
\[
\{c_2\} - \{c_1\} \, \longmapsto \, \int_{c_1}^{c_2} \omega_f
\]
yields an $L_{k-2,\chi}(\C)$-valued modular symbol over $\Gamma_0(M)$ (where the integral is along the oriented geodesic in $\mathfrak H$ from $c_1$ to $c_2$). We then have the following:

\begin{theorem} 
For each choice of sign $\pm$, the map $f \mapsto \psi_f$ yields a Hecke-equivariant inclusion 
\begin{equation} \label{ES}
S_{k}(M,\chi) \, \hookrightarrow \, \Symb_{\Gamma_0(M)}(L_{k-2,\chi}(\C))^{\pm}.
\end{equation}
\end{theorem}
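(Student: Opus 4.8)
The plan is to verify in turn that $\psi_f$ is a well-defined modular symbol, that $f \mapsto \psi_f$ is $\C$-linear and Hecke-equivariant, and finally that each of the two eigen-projections is injective; the last point is the only substantial one.

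First I would check well-definedness. For a degree-zero divisor $\{c_2\}-\{c_1\}$ the integral $\int_{c_1}^{c_2}\omega_f$ converges even though $c_1,c_2$ are cusps, because $f$ is cuspidal and hence decays exponentially towards each cusp; its value is independent of the chosen path since $\omega_f$ is holomorphic, hence closed. That $\psi_f$ lies in $\Symb_{\Gamma_0(M)}(L_{k-2,\chi}(\C))$ is precisely the invariance $\psi_f|\gamma = \psi_f$, which unwinds to the identity $\gamma^*\omega_f|\gamma = \omega_f$ recorded just before the statement, combined with the change of variables $\int_{\gamma c_1}^{\gamma c_2}\omega_f = \int_{c_1}^{c_2}\gamma^*\omega_f$. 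Linearity in $f$ is immediate from the linearity of $f \mapsto \omega_f$.

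Next, Hecke-equivariance. Both $S_{k}(M,\chi)$ and $\Symb_{\Gamma_0(M)}(L_{k-2,\chi}(\C))$ carry Hecke operators described by the same double cosets inside the semigroup $G_0(M)$. I would fix a system of coset representatives for the relevant double coset (the standard representatives for $T_p$ with $p\nmid M$, or for $U_p$ when $p\mid M$) and compute both actions on these representatives: on forms this reproduces $\omega_{T_p f}$ by the classical description of Hecke operators, while on symbols it is governed by the rule $(\Phi|\gamma)(D) = \Phi(\gamma D)|\gamma$. The two computations are matched termwise by the same change of variables as above, giving $\psi_{T_p f} = T_p\psi_f$; since the Hecke action commutes with $\iota$, the equivariance descends to the $\pm$-parts.

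The hard part will be injectivity of $f\mapsto \psi_f^{\pm}$. I would first record that the total map $f\mapsto \psi_f$ is injective: if $\psi_f = 0$ then the parabolic cohomology class of the holomorphic $L_{k-2,\chi}(\C)$-valued form $\omega_f$ vanishes, and since holomorphic forms inject into $H^1_{\mathrm{par}}$ by Hodge theory this forces $\omega_f = 0$, whence $f=0$. This alone does \emph{not} yield injectivity of a single eigen-projection, so the real work is to separate the holomorphic contribution inside one $\iota$-eigenspace. To do this I would introduce the $\C$-antilinear conjugation on $\Symb_{\Gamma_0(M)}(L_{k-2,\chi}(\C))$ obtained by conjugating coefficients: it carries $\psi_f$ to the symbol attached to the antiholomorphic form $\bar f \in \overline{S_{k}(M,\bar\chi)}$ and interchanges the two $\iota$-eigenspaces. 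Combining the holomorphic and antiholomorphic images realizes the classical Eichler--Shimura isomorphism $\Symb_{\Gamma_0(M)}(L_{k-2,\chi}(\C))_{\mathrm{par}} \cong S_{k}(M,\chi)\oplus \overline{S_{k}(M,\bar\chi)}$, and intersecting this decomposition with a fixed eigenspace $\Symb_{\Gamma_0(M)}(L_{k-2,\chi}(\C))^{\pm}$ forces the implication $\psi_f^{\pm}=0 \Rightarrow f=0$. The genuine obstacle is the nondegeneracy underlying that isomorphism: it is exactly the point where one must invoke Haberland's period formula or Hodge theory on the modular curve, rather than anything formal, since injectivity of the total map is insufficient by itself.
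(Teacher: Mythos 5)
Your overall route is sound and, at bottom, the same as the paper's: everything reduces to the nondegeneracy of the Eichler--Shimura map onto each $\iota$-eigenspace. The paper disposes of the theorem by citing three known maps --- the Eichler--Shimura isomorphism $S_{k}(M,\chi)\xrightarrow{\sim}H^1_{\mathrm{par}}(\Gamma_0(M),L_{k-2,\chi}(\C))^{\pm}$, the Manin--Drinfeld section of $H^1_c\to H^1_{\mathrm{par}}$, and the Ash--Stevens identification $H^1_c\cong\Symb_{\Gamma_0(M)}$ --- and adds cautionary remarks about which of these respect integral structures. Your version is more hands-on: you verify convergence, $\Gamma_0(M)$-invariance and Hecke-equivariance directly from the double-coset description, and you observe (correctly) that injectivity of the $\pm$-projection does not need Manin--Drinfeld at all, since it already follows from injectivity of $S_k(M,\chi)\to H^1_{\mathrm{par}}{}^{\pm}$ composed with the tautological map $\Symb\to H^1_{\mathrm{par}}$. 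That is a legitimate simplification, and correctly isolates the one non-formal input.

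There is, however, a concrete misstep in your injectivity argument. You claim that the $\C$-antilinear coefficient conjugation interchanges the two $\iota$-eigenspaces. It does not: since $\iota=\mathrm{diag}(1,-1)$ has integer entries, conjugation of coefficients commutes with the $\iota$-action (for $\epsilon=\pm1$ one has $\bar\epsilon=\epsilon$), so it \emph{preserves} each eigenspace while carrying holomorphic classes to antiholomorphic ones. The involution that actually swaps the two summands of the Eichler--Shimura decomposition $H^1_{\mathrm{par}}\cong S_k(M,\chi)\oplus\overline{S_k(M,\bar\chi)}$ is $\iota$ itself, because $z\mapsto-\bar z$ is antiholomorphic. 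The repaired argument is then a one-liner: if $\psi_f^{+}=0$, say, then $\psi_f|\iota=-\psi_f$; the left-hand side lies in the antiholomorphic summand and the right-hand side in the holomorphic one, so directness of the Eichler--Shimura decomposition forces $\psi_f=0$ in $H^1_{\mathrm{par}}$ and hence $f=0$. With that substitution your proof is complete; the only serious external input remains the Eichler--Shimura isomorphism, exactly as in the paper.
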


\begin{proof}
	This is a combination of the Eichler--Shimura isomorphism, the Manin--Drinfeld principle and the Ash--Stevens isomorphism (see \cite[Proposition 4.2]{AshStevens}), which composed give us maps
	\[
		S_{k}(M,\chi) \stackrel[\text{ES}]{\sim}{\longrightarrow} H^1_\mathrm{par}(\Gamma_0(M), L_{k-2,\chi}(\C))^\pm \stackrel[\text{DM}]{}{\hookrightarrow} H^1_c(\Gamma_0(M), L_{k-2,\chi}(\C))^\pm  \stackrel[\textrm{AS}]{\sim}{\longrightarrow}  \Symb_{\Gamma_0(M)}(L_{k-2,\chi}(\C))^\pm.
	\]
	We want to stress that the first isomorphism comes from integration and it does not respect algebraicity. If $f$ is defined over a subring $R$ of $\C$ its image is not necessarily in $H^1_\mathrm{par}(\Gamma_0(M), L_{k-2,\chi}(R))^\pm$. Also, the inclusion provided by the Manin--Drinfeld principle arises by taking a section of the natural projection map $H^1_c(\Gamma_0(M), L_{k-2,\chi}(\C)) \to H^1_\mathrm{par}(\Gamma_0(M), L_{k-2,\chi}(\C))$. Such a section is defined over any characteristic zero field, but it does not need to descend to any subring $R$ of $\C$.
\end{proof}

If $f \in S_{k}(M,\chi)$ is a Hecke eigenform and $\mathcal O_f$ denotes the ring of integers of its Hecke field, it is well-known that there exist two complex numbers $\Omega_f^{\pm} \in \C^{\times}$ such that the normalized modular symbols
\begin{equation}\label{shimuraperiodsf}
\varphi_f^{\pm} := \frac{1}{\Omega_f^{\pm}} \cdot \psi_f^{\pm} \in \mathrm{Symb}_{\Gamma_0(M)}(L_{k-2,\chi}(\C))^{\pm}, 
\end{equation}
one for each choice of sign, are defined over $\mathcal O_f$ (cf. \cite{ManinPeriods}).

\subsection{Hida theory} \label{hidasection}

Let $\cO$ be a finite extension of $\Zp$ and let $\Gamma := 1+p\Zp$. We write $\Lambda = \Lambda_\cO = \cO[[\Gamma]]$ for the usual Iwasawa algebra and consider the space:
\[
    \mathcal X(\Lambda) := \Hom_{\cO-\cont}(\Lambda, \barQp).
\]
Elements $\mathbf a \in \Lambda$ can be seen as functions on $\mathcal X(\Lambda)$ through evaluation at $\mathbf a$, i.e. by setting $\mathbf a(\kappa) := \kappa(\mathbf a)$. The set $\mathcal X(\Lambda)$ is endowed with the analytic structure induced from the natural identification 
\begin{equation}\label{XLambda-characters}
\mathcal X(\Lambda) \simeq \mathrm{Hom}_\cont(\Gamma, \bar{\Q}_p^{\times})
\end{equation}
between $\mathcal X(\Lambda)$ and the group of continuous characters $\kappa\colon \Gamma \to \bar{\Q}_p^{\times}$. We define the subset of {\em classical} points (or characters) as
\[
    \mathcal X^{\mathrm{cl}}(\Lambda) := \{ [t]\mapsto t^{k-2}\mid k\geq 2 \}.
\]

If $\mathcal R$ is a finite flat $\Lambda$-algebra then we write 
\[
    \mathcal X(\mathcal R) := \mathrm{Hom}_{\cO-\cont}(\mathcal R, \bar{\Q}_p)
\]
for the set of continuous $\cO$-homomorphisms from $\mathcal R$ to $\bar{\Q}_p$, to which we also refer as `points of $\mathcal R$'. The restriction to $\Lambda$ (via the structure morphism $\Lambda \to \mathcal R$) induces a surjective finite-to-one map 
\[
    \pi\colon \mathcal X(\mathcal R) \, \longrightarrow \, \mathcal X(\Lambda).
\]
A point $\kappa \in \mathcal X(\mathcal R)$ is said to be classical if the point $\pi(\kappa) \in \mathcal X(\Lambda)$ is classical, meaning that for all $[t]\in \Gamma \subset \Lambda$, $\kappa([t])=t^{k-2}$, for some $k>2$. We will write $\mathcal X^{\mathrm{cl}}(\mathcal R)$ for the subset of arithmetic points in $\mathcal X(\mathcal R)$.
One can define analytic charts around all points $\kappa$ of $\mathcal X(\mathcal R)$ which are unramified over $\Lambda$, by building sections $S_\kappa$ of the map $\pi$, so that $\mathcal X(\mathcal R)$ inherits the structure of rigid analytic cover of $\mathcal X(\Lambda)$.
A function $f\colon \mathcal U \subseteq \mathcal X(\mathcal R) \to \bar{\Q}_p$ defined on an analytic neighborhood of $\kappa$ is analytic if so is $f \circ S_{\kappa}$. The evaluation at an element $\mathbf a \in \mathcal R$ yields a function $\mathbf a\colon \mathcal X(\mathcal R) \to \bar{\Q}_p$, $\mathbf a(\kappa) := \kappa(\mathbf a)$, which is analytic at every unramified point of $\mathcal X (\mathcal R)$.

If $N$ is a positive integer, with $p \nmid N$, consider the $\Lambda$-algebra
\[
\Lambda_N := \cO[[\Z_{p,N}^{\times}]] \simeq \Lambda[\Delta_{Np}]
\]
associated with the completed group ring on $\Z_{p,N}^{\times} := \varprojlim (\Z/Np^m\Z)^{\times}$. Under the natural isomorphisms 
\[
    \Z_{p,N}^{\times} \simeq \Z_p^{\times} \times \Delta_N \simeq \Gamma \times \Delta_{Np}, \quad \text{where } \Delta_{M} := (\Z/M\Z)^{\times},
\]
we will write $t_p \in \Z_p^{\times}$ and $t_N \in \Delta_N$ for the projections of $t \in \Z_{p,N}^{\times}$ in $\Z_p^{\times}$ and $\Delta_N^{\times}$. We will further write $\langle t_p\rangle \in \Gamma$ for the projection of $t_p$ in $\Gamma$. Observe that $t_p = \langle t_p \rangle \omega(t_p)$, where $\omega$ denotes the Teichmuller character. Notice also that the analytic space $\mathcal X(\Lambda_N)$ is naturally isomorphic to a product of $\varphi(Np)$ copies of $\mathcal X(\Lambda)$, with the components being in one-to-one correspondence with the $\bar{\Q}_p$-valued characters $\psi$ of $\Delta_{Np}$. The inclusion of the $\psi$-component in the full space is described on rings by the map $\mathrm{loc}_\psi\colon \Lambda_N\to \Lambda$ given by $[t] \mapsto \psi(t)[\langle t\rangle]$ on group-like elements.

Fix a prime $p \geq 5$, and an integer $N\geq 1$ such that $p \nmid N$. If $k\geq 2$ and $m\geq 1$ are integers, recall that an eigenform $f \in S_{k}(\Gamma_1(Np^m),\bar{\Q}_p)$ is said to be {\em ordinary} (at $p$) if the eigenvalue $a_p$ of $T_p$ acting on $f$ is a $p$-adic unit. If $f$ is also normalized ($a_1=1$) and the prime-to-$p$ part of the conductor is $N$, one then says that $f$ is a {\em $p$-stabilized newform} of tame conductor $N$. One can check that if $f$ is an ordinary $p$-stabilized newform of tame conductor $N$, then either $f$ is already a newform, or $f$ is related to a newform $g$ of conductor $N$ by the so-called process of ordinary $p$-stabilization (and in this case the level of $f$ is $Np$). In the second case, if $\alpha$ and $\beta$ denote the roots of the $p$-th Hecke polynomial for $g$, labelled so that $\alpha$ is the unit root and $\beta$ is the non-unit root, then one has $f(z) = g(z) - \beta g(pz)$. We write $S_{k}^\ord(\Gamma_1(Np^m),\bar{\Q}_p)$ for the set of ordinary $p$-stabilized newforms in $S_{k}(\Gamma_1(Np^m),\bar{\Q}_p)$. Assume from now on that $\cO$ contains the values of the characters of $\Delta_{Np}$.

\begin{definition}\label{def:Hidafamily}
    A {\em Hida family} of tame level $N$ and tame character $\chi$ modulo $N$ is a quadruple $(\mathcal R_{\mathbf f}, \cU_{\mathbf f}, \cU_{\mathbf f}^\mathrm{cl}, \mathbf f)$ where:
    \begin{itemize}
        \item $\mathcal R_{\mathbf f}$ is a finite flat integral domain extension of $\Lambda$;
        \item $\mathcal U_{\mathbf f} \subset \mathcal X(\mathcal R_{\mathbf f})$ is an open subset for the rigid analytic topology; 
        \item $\cU_{\mathbf f}^{\mathrm{cl}} \subset \mathcal U_{\mathbf f} \cap \mathcal X^{\mathrm{cl}}(\mathcal R_{\mathbf f})$ is a dense subset of $\mathcal U_{\mathbf f}$ whose weights are contained in a single residue class $k_0-2$ modulo $p-1$; 
        \item $\mathbf f \in \mathcal R_{\mathbf f}[[q]]$ is a power series in $q$ with coefficients in $\mathcal R_{\mathbf f}$, such that for all $\kappa\in \cU^{\mathrm{cl}}_{\mathbf f}$ of weight $k-2> 0$, 
        \[
            \mathbf f(\kappa) \in S_{k}^{\mathrm{ord}}(Np,\chi)
        \]
        is the ordinary $p$-stabilization of a newform $f_\kappa \in S_{k}(N,\chi)$.
\end{itemize}
\end{definition}

\begin{remark} 
When $k-2=0$, the form $\mathbf f(\kappa)$ can be either old or new at $p$. Only in the first case, $\mathbf f(\kappa)$ would be the $p$-stabilization of a weight 2 newform of level $N$.
\end{remark}

\begin{theorem}[Hida Theory]
    Let $f\in S_{k}^{\mathrm{ord}}(Np,\chi)$ be a $p$-stabilized newform. Then there exists a unique Hida family $(\mathcal R_{\hf},\mathcal U_{\hf}, \mathcal U_{\hf}^{\mathrm{cl}},\hf)$ and a unique point $\kappa\in \mathcal U_{\hf}^{\mathrm{cl}}$, such that $\hf_{\kappa}=f$.
\end{theorem}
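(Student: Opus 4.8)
The plan is to reduce the statement to two deep inputs of Hida theory—the finiteness of the big ordinary Hecke algebra over $\Lambda$ and the control theorem—and then to carve out the irreducible component determined by $f$. First I would introduce the universal ordinary cuspidal Hecke algebra $\mathbb{T}^{\ord}$ of tame level $N$ and tame character $\chi$, realized as the $\Lambda$-subalgebra generated by the operators $T_n$ (for $p\nmid n$) and $U_p$ acting on the space of ordinary $\Lambda$-adic cusp forms (regarded as finite flat over $\Lambda$ after fixing the tame character via the map $\mathrm{loc}_\psi$ of Section \ref{hidasection}). Hida's fundamental theorem asserts that $\mathbb{T}^{\ord}$ is finite and flat over $\Lambda$. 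A $p$-stabilized ordinary newform $f\in S_k^{\ord}(Np,\chi)$ determines a system of Hecke eigenvalues, i.e.\ an $\cO$-algebra homomorphism $\lambda_f\colon \mathbb{T}^{\ord}\to\barQp$, which by the control theorem lies above the classical point $\kappa_0\in\mathcal X^{\mathrm{cl}}(\Lambda)$ given by $[t]\mapsto t^{k-2}$.

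Next I would extract the family. Let $\mathfrak{p}\subset\mathbb{T}^{\ord}$ be the minimal prime contained in $\ker(\lambda_f)$, and set $\mathcal R_{\hf}:=\mathbb{T}^{\ord}/\mathfrak{p}$. Since $\mathbb{T}^{\ord}$ is finite flat over the normal domain $\Lambda$, the quotient $\mathcal R_{\hf}$ is a finite flat integral-domain extension of $\Lambda$, so $\lambda_f$ factors through a point $\kappa\in\mathcal X^{\mathrm{cl}}(\mathcal R_{\hf})$ with $\pi(\kappa)=\kappa_0$. The images $\mathbf a_n\in\mathcal R_{\hf}$ of the Hecke operators assemble into $\hf=\sum_{n\geq 1}\mathbf a_n q^n\in\mathcal R_{\hf}[[q]]$, and by construction $\hf(\kappa)=f$. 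To specify $\mathcal U_{\hf}$ and $\mathcal U_{\hf}^{\mathrm{cl}}$ I would invoke the control theorem once more: for every classical $\kappa'\in\mathcal X^{\mathrm{cl}}(\mathcal R_{\hf})$ whose weight lies in the residue class $k-2$ modulo $p-1$ (so the nebentype is compatible with $\chi$), the specialization $\hf(\kappa')$ is the $q$-expansion of an ordinary $p$-stabilized newform in $S_{k'}^{\ord}(Np,\chi)$. I would then take $\mathcal U_{\hf}$ to be a connected rigid-analytic neighborhood of $\kappa$ on which the unramified-cover structure of Section \ref{hidasection} is available, and $\mathcal U_{\hf}^{\mathrm{cl}}$ the set of such classical points inside it; density holds because the classical weights in a fixed residue class are dense in $\mathcal X(\Lambda)$ and $\pi$ is finite. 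This verifies all the requirements of Definition \ref{def:Hidafamily}.

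For uniqueness, any Hida family through $f$ produces the eigenvalue system $\lambda_f$ at its point above $\kappa_0$; because two such families must agree on a dense set of classical points, where strong multiplicity one determines the Hecke eigenvalues, they factor through the same minimal prime $\mathfrak{p}$ of $\mathbb{T}^{\ord}$. Hence they share the same component $\mathcal R_{\hf}$, the same power series $\hf$, and—since the fibre of $\pi$ over $\kappa_0$ meeting this component is a single point once the eigenvalues are fixed—the same classical point $\kappa$ (up to the harmless freedom of shrinking $\mathcal U_{\hf}$). I expect the main obstacle to be entirely the import of the two deep structural results, namely the finiteness/freeness of $\mathbb{T}^{\ord}$ over $\Lambda$ and the control theorem guaranteeing classicality of the specializations; the remaining work is bookkeeping, principally checking that the choice of minimal prime yields a genuine domain (the correct irreducible component) and that the residue-class constraint on weights is compatible with the fixed tame character $\chi$.
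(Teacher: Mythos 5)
The paper itself offers no proof of this theorem: it is imported wholesale from Hida's work (cf.\ \cite{Hid86} and Remark \ref{rmk:UniversalVersion}), so there is nothing internal to compare your argument against. Your sketch is the standard and essentially correct derivation: finiteness of the big ordinary cuspidal Hecke algebra $\mathbb{T}^{\ord}$ over $\Lambda$, the control theorem for classicality of arithmetic specializations, passage to the irreducible component cut out by the minimal prime below $\ker(\lambda_f)$, and uniqueness via density of classical points together with strong multiplicity one. Your identification of the two deep inputs is exactly right, and the observation that a point $\kappa$ of $\mathcal X(\mathcal R_{\hf})$ with $\hf(\kappa)=f$ is literally the homomorphism induced by $\lambda_f$ does dispose of the uniqueness of $\kappa$.

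One step deserves more care. You assert that $\mathcal R_{\hf}:=\mathbb{T}^{\ord}/\mathfrak{p}$ is finite \emph{flat} over $\Lambda$ because $\mathbb{T}^{\ord}$ is. Finiteness and the domain property are immediate, but flatness is not: $\Lambda\simeq\Zp[[T]]$ is a two-dimensional regular local ring, and a finite torsion-free module over it need not be free (e.g.\ the maximal ideal $(p,T)$), so quotienting a free algebra by a minimal prime can destroy flatness. The standard repair, which Definition \ref{def:Hidafamily} implicitly requires, is to replace $\mathbb{T}^{\ord}/\mathfrak{p}$ by its normalization: a normal domain finite over $\Lambda$ is $S_2$, hence Cohen--Macaulay in dimension two, hence free over $\Lambda$ by Auslander--Buchsbaum; the classical points and the coefficients $\mathbf a_n$ lift to the normalization without affecting anything else in your argument. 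With that adjustment the proof is complete.
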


\begin{remark} \label{rmk:UniversalVersion}
    A Hida family can be seen as a branch of the big Hecke $\Lambda_N$-algebra $\mathcal R_N$ built by Hida (see \cite{Hid86}). In fact, by the universal property of $\mathcal R_N$, there exists a unique $\Lambda$-algebra homomorphism $\mathrm{loc}_{\mathbf f}\colon \mathcal R_N \to \mathcal R_{\mathbf f}$, which gives $\mathcal R_{\mathbf f}$ the structure of $\Lambda_N$-algebra determined on group like elements $[t]\in \Lambda_N$ by  $\mathrm{loc}_\hf([t]) = \mathrm{loc}_{\chi\omega^{k_0-2}}([t]) = \chi(t)\omega^{k_0-2}(t)[\langle t \rangle]$.
\end{remark}

\subsection{$p$-adic interpolation of modular symbols}\label{sec:padicMS}

The modular symbols associated with modular forms as recalled above can be $p$-adically interpolated, giving rise to `$\Lambda$-adic modular symbols'. We now recall this construction, mainly due to Greenberg and Stevens \cite{GS93}.

Consider the subset $(\Z_p^2)'$ of {\em primitive} vectors in $\Z_p^2$, meaning the subset of vectors which do not lie in $(p\Z_p)^2$, and let $\mathbf D = \mathrm{Meas}((\Z_p^2)')$ denote the group of $\Z_p$-valued measures on $(\Z_p^2)'$. Namely, if $\mathrm{Cont}((\Z_p^2)')$ denotes the space of continuous $\Z_p$-valued functions on $(\Z_p^2)'$, then $\mathbf D$ is the space of continuous $\Z_p$-valued functionals on $\mathrm{Cont}((\Z_p^2)')$. One can also see $\mathbf D$ as a direct summand of $\mathrm{Meas}(\Z_p^2)$, via restriction of continuous functions on $\Z_p^2$ to $(\Z_p^2)'$. Following standard conventions of measure theory, if $\mu \in \mathbf D$ we will write 
\[
\int_U f d\mu
\]
to denote $\mu(f \cdot \mathbf 1_U)$ for any $f \in \mathrm{Cont}((\Z_p^2)')$ and any compact open subset $U \subseteq (\Z_p^2)'$.

There are various natural actions on $\mathbf D$. On the one hand, the scalar action of $\Z_p^{\times}$ on $(\Z_p^2)'$ induces a natural action of $\Z_p[[\Z_p^{\times}]]$ on $\mathbf D$. On the other hand, viewing elements of $(\Z_p^2)'$ as row vectors, we let $\Gamma_0(N)$ act on $(\Z_p^2)'$ by multiplication on the right. This action induces a natural (right) action on $\mathbf D$, which is characterized by the fact that, for all $\mu \in \mathbf D$, $\gamma \in \Gamma_0(N)$, and $f \in \mathrm{Cont}((\Z_p^2)')$, one has 
\[
\int f(x,y) d(\mu|\gamma)(x,y) = \int f((x,y){}^t\gamma^{-1})d\mu(x,y).
\]
The two actions just described clearly commute one with each other, hence we can view $\mathbf D$ as a $\Z_p[[\Z_p^{\times}]][\Gamma_0(N)]$-module.

If $\mathcal R$ is a $\Lambda_N$-algebra, we define the $\mathcal R[\Gamma_0(N)]$-module
\[
\mathbf D_{\mathcal R} := \mathbf D \otimes_{\Z_p[[\Z_p^{\times}]]} \mathcal R,
\]
where $\Gamma_0(N)$ acts through the rule \[
(\mu \otimes \lambda) | \gamma := \mu|\gamma \otimes [a]_N\lambda,
\]
for $\mu \in \mathbf D$, $\lambda \in \mathcal R$, and $\gamma \in \Gamma_0(N)$, where $a$ is the upper-left entry of $\gamma$ and $[a]_N \in \Delta_N = (\Z/N\Z)^{\times} \subseteq \Lambda_N \to \mathcal R$ is the image in $\mathcal R$ of the group-like element of $a$ modulo $N$. 

Let $(\mathcal R_{\mathbf f}, \mathcal U_{\mathbf f}, \mathcal U_{\mathbf f}^{\mathrm{cl}}, \mathbf f)$ be a Hida family of tame level $N$ and tame character $\chi$ modulo $N$, as in Definition \ref{def:Hidafamily}. We shall consider the $\mathcal R_\hf[\Gamma_0]$-module $\mathbf D_{\mathbf f} := \mathbf D_{\mathcal R_{\mathbf f}}$. If $\kappa \in \mathcal U_{\mathbf f}^{\mathrm{cl}}$ has weight $k-2 \geq 0$, so that $\mathbf f(\kappa) \in S_{k}^{\mathrm{ord}}(Np,\chi)$, then we have a specialization map
\[
    \mathrm{sp}_{\kappa}(\mu\otimes\alpha) = \kappa(\alpha) \cdot \int_{\Z_p\times\Z_p^{\times}} \frac{(xY-yX)^{k-2}}{r!}d\mu(x,y)
\]
for $\mu \in \mathbf D$, $\alpha \in \mathcal R_\hf$, which yields a specialization map on modular symbols
\begin{align*}
    \mathrm{sp}_{\kappa,*}\colon \mathrm{Symb}_{\Gamma_0(N)}(\mathbf D_{\mathbf f}) \, &\to \, \mathrm{Symb}_{\Gamma_0(Np)}(L_{k-2,\chi}(\mathcal O_{\hf(\kappa)})) \\
    \Phi &\mapsto \Phi_\kappa:=\mathrm{sp}_{\kappa,*}(\Phi).
\end{align*}
For simplicity, we may write $\psi_{\mathbf f (\kappa)}^{\pm}$ for the $\pm$-components of the modular symbol associated with $\mathbf f(\kappa)$. For each classical point, we may fix complex periods $\Omega_{\mathbf f(\kappa)}^{\pm} \in \C^{\times}$ so that the normalized cohomology classes
\begin{equation}\label{ManinperiodsfN}
\varphi_{\hf(\kappa)}^{\pm} := \frac{1}{\Omega_{\hf(\kappa)}^{\pm}} \psi_{\hf(\kappa)}^\pm
\end{equation}
are defined over the ring of integers $\mathcal O_{\hf(\kappa)}$ of the Hecke field of $\hf(\kappa)$ (cf. \eqref{shimuraperiodsf}). Following \cite{GS93}, we recall how the collection of cohomology classes $\varphi_{\hf(\kappa)}^{\pm}$, as $\kappa$ varies on the classical points of $\mathcal R_\hf$, can be put together into $\Lambda$-adic cohomology classes (or modular symbols) with the following

\begin{theorem}\label{thm:Lambda-adic-MS}
Let $\kappa_0 \in \mathcal U_{\mathbf f}^{\mathrm{cl}} \subseteq \mathcal X^{\mathrm{cl}}(\mathcal R_{\mathbf f})$ be a classical point. There exists a modular symbol $\Phi_{\mathbf f} \in \mathrm{Symb}_{\Gamma_0(N)}(\mathbf D_{\mathbf f})$, and a choice of $p$-adic periods $\Omega_{\kappa} \in R_{\kappa}$, one for each $\kappa \in \mathcal U_{\mathbf f}^{\mathrm{cl}}$, such that:
\begin{itemize}
    \item[i)] $\Omega_{\kappa_0} \neq 0$;
    \item[ii)] $\Phi_{\mathbf f,\kappa} = \Omega_{\kappa}\cdot \varphi_{\mathbf f(\kappa)}^-$ for all $\kappa \in \mathcal U_{\mathbf f}$.
\end{itemize}
\end{theorem}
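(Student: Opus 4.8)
The plan is to obtain $\Phi_{\mathbf f}$ as a generator of a rank-one module of \emph{ordinary} $\Lambda$-adic modular symbols attached to the Hida family, and to \emph{define} the $p$-adic periods $\Omega_\kappa$ by comparing the specialization of this generator with the normalized classical symbol $\varphi_{\mathbf f(\kappa)}^-$. The three essential inputs, all going back to Hida theory and to Greenberg--Stevens \cite{GS93}, are: the ordinary idempotent together with the resulting finiteness of the ordinary part; a control theorem relating $\Lambda$-adic and classical modular symbols through $\mathrm{sp}_{\kappa,*}$; and a multiplicity-one statement identifying the $\mathbf f$-isotypic minus eigenspace as free of rank one. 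We work throughout with the minus component for the involution $\iota$, the plus case being identical.

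First I would apply Hida's ordinary projector $e = \lim_n U_p^{n!}$ and consider the minus part $\mathbf M^- := e\,\mathrm{Symb}_{\Gamma_0(N)}(\mathbf D_{\mathbf f})^-$. Since the Hecke action on $\mathbf D_{\mathbf f}$ commutes with $\iota$ and with the $\mathcal R_{\mathbf f}$-action, $\mathbf M^-$ is a Hecke-stable, finitely generated $\mathcal R_{\mathbf f}$-module, and the specialization maps $\mathrm{sp}_{\kappa,*}$ of Section \ref{sec:padicMS} are Hecke-equivariant and respect the decomposition into $\pm$-eigenspaces for $\iota$. The key geometric input is the Greenberg--Stevens control theorem: for each classical $\kappa \in \mathcal U_{\mathbf f}^{\mathrm{cl}}$ of weight $k-2$, specialization induces a Hecke-equivariant isomorphism
\[
\mathbf M^- \otimes_{\mathcal R_{\mathbf f},\kappa} R_\kappa \; \xrightarrow{\;\sim\;} \; e\,\mathrm{Symb}_{\Gamma_0(Np)}(L_{k-2,\chi}(R_\kappa))^-,
\]
which rests on the exactness of ordinary specialization, i.e. the vanishing of the relevant higher cohomology after applying $e$.

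Next I would pass to the $\mathbf f$-isotypic part. On the target above, the $\mathbf f(\kappa)$-eigenspace is one-dimensional, spanned by $\psi_{\mathbf f(\kappa)}^-$ and hence by $\varphi_{\mathbf f(\kappa)}^-$; this uses the Eichler--Shimura inclusion \eqref{ES} together with the ordinarity of $\mathbf f(\kappa)$. Localizing $\mathbf M^-$ at the branch cut out by $\mathbf f$ and invoking that this localization is free of rank one over $\mathcal R_{\mathbf f}$ (a Gorenstein/multiplicity-one property of the ordinary Hecke algebra), I would fix a generator $\Phi_{\mathbf f}$. For every classical $\kappa$, the control isomorphism sends $\Phi_{\mathbf f,\kappa} := \mathrm{sp}_{\kappa,*}(\Phi_{\mathbf f})$ into the one-dimensional $\mathbf f(\kappa)$-eigenspace, so there is a unique scalar $\Omega_\kappa \in R_\kappa$ with $\Phi_{\mathbf f,\kappa} = \Omega_\kappa \cdot \varphi_{\mathbf f(\kappa)}^-$; this is property (ii). For property (i), the isomorphism at $\kappa_0$ and the fact that $\Phi_{\mathbf f}$ generates $\mathbf M^-$ force $\Phi_{\mathbf f,\kappa_0} \neq 0$, whence $\Omega_{\kappa_0}\neq 0$.

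I expect the main obstacle to be the rank-one freeness used to select the generator $\Phi_{\mathbf f}$: away from a Gorenstein or multiplicity-one setting, the localized module may only become free after inverting the congruence ideal, that is, a nonzero element of $\mathcal R_{\mathbf f}$. In that case the honest fix is to shrink $\mathcal U_{\mathbf f}$ to the complement of the corresponding zero locus, retaining $\kappa_0$, so that the interpolation property holds on a possibly smaller --- but still dense --- set of classical points. The control theorem itself is standard once the ordinary projector is in place, so the arithmetic content is concentrated in this freeness statement and in the cited results of \cite{GS93}.
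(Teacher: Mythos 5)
Your proposal is correct and follows essentially the same route as the paper: both rest on the Greenberg--Stevens result (Theorem 5.13 of \cite{GS93}) that the module of Hecke eigenclasses in $\mathrm{Symb}_{\Gamma_0(N)}(\mathbf D_{\mathbf f})^-$ is free of rank one, and both then define $\Omega_\kappa$ as the ratio between the specialization of a generator and the normalized classical symbol $\varphi_{\mathbf f(\kappa)}^-$, using weak multiplicity one. The regularity issue you flag at the end is handled in the paper not by shrinking $\mathcal U_{\mathbf f}$ but by replacing the generator $\Psi$ with $\alpha\Psi$ for a suitable $\alpha\in\mathcal R_{\mathbf f}$ with $\alpha(\kappa_0)\neq 0$ that clears denominators, which is the same fix in substance.
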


\begin{proof}
The proof is essentially a consequence of \cite[Theorem 5.13]{GS93}, as explained in \cite[Theorem 5.5]{Stevens}. We recall some details here for convenience of the reader. The modular symbol $\varphi_{\hf(\kappa_0)}^-$ is a Hecke eigenclass, so that \cite[Theorem 5.13]{GS93} tells us that the $\mathcal R_\hf$-module of Hecke eigenclasses in the space $\mathrm{Symb}_{\Gamma_0(N)}(\mathbf D_{\mathcal R_\hf})^-$ is free of rank one, and it is generated by an element $\Psi$ whose image under $\mathrm{sp}_{\kappa_0,*}$ equals $\varphi_{\hf(\kappa_0)}^-$. One can choose an element $\alpha \in\mathcal R_\hf$ such that $\alpha(\kappa_0) \neq 0$ and $\alpha \Psi$ is everywhere regular. Then $\Phi = \alpha\Psi$ is the desired modular symbol. Indeed, by weak multiplicity one, the weight $\kappa$ specialization $\Phi_{\kappa}$ is a multiple of $\varphi_{\hf(\kappa)}^-$ for each arithmetic point $\kappa$, thus one can choose periods $\Omega_{\kappa} \in R_{\kappa}$ verifying conditions i) and ii).
\end{proof}

\section{The \texorpdfstring{$\Lambda$}{Lambda}-adic \texorpdfstring{$\mathfrak d$}{d}-th Shintani lifting} \label{Sec:Lambda-adic}

This section is devoted to the construction of the so-called `$\Lambda$-adic $\d$-th Shintani lifting', which interpolates the $\d$-th Shintani liftings of modular forms in $p$-adic families. To provide this construction, first we explain in \ref{sec:cohomological} a cohomological interpretation of the classical $\d$-th Shintani lifting described in Section \ref{Sec:Shintani-Lifting}, which is better suited for the $p$-adic interpolation and yields also an algebraicity statement. After that, in Section \ref{sec:LambdaShintani} we refine Stevens' approach in \cite{Stevens} to define the $\Lambda$-adic $\d$-th Shintani lifting $\Theta_{\d}(\mathbf f)$ of a Hida family $\mathbf f$ (cf. Equation \eqref{def:LambdaShintani}). The interpolation property is precisely stated in Theorem \ref{thm:LambdaShintani-interpolation}. The main result of Section \ref{sec:LambdaKohnen} is the $\Lambda$-adic version of Kohnen's formula given in Theorem \ref{thm:comparisonGS}. As an application of this, Corollary \ref{cor:KohnenFormula-Np} gives a mild generalization of Kohnen's classical formula. Finally, in Section \ref{sec:Shadows} we study the evaluation of the $\Lambda$-adic Kohnen formula at classical points outside the interpolation region.

\subsection{Cohomological $\d$-th Shintani lifting and integrality}\label{sec:cohomological}

Let $M\geq 1$ be an odd integer, and $\chi$ be a Dirichlet character modulo $M$ (later we will be interested in $M=N$ or $Np$). Let $\chi_0$ be the primitive character associated with $M$, $M_0$ be its conductor, and set $M_1 = M/M_0$, $\epsilon = \chi(-1)$. Let $k\geq 1$ be an integer, and fix a fundamental discriminant $\d$ with $\mathrm{gcd}(M_0,\d)=1$ and $\epsilon (-1)^k\d > 0$. We explain the cohomological interpretation of the $\d$-th Shintani lifting from $S_{2k}(M,\chi^2)$ to $S_{k+1/2}^+(M,\chi)$.

If $f \in S_{2k}(M,\chi^2)$, recall that the definition of its $\d$-th Shintani lifting $\theta_{k,M,\d}(f)$ involves certain integrals $I_{k,\chi}(f,Q)$ associated with integral binary quadratic forms $Q$ of discriminant divisible by $|\d|$ (see \eqref{dShintani-withcharacter} and \eqref{rkN-def}). Namely, if $m\geq 1$ is an integer such that $\epsilon(-1)^km \equiv 0,1\pmod 4$, then the $m$-th Fourier coefficient of $\theta_{k,M,\chi,\d}(f)$ involves the computation of integrals
\[
I_{k,\chi}(f,Q) = \chi_0(Q) \int_{C_Q} f(z)Q(z,1)^{k-1}dz
\]
for integral binary quadratic forms $Q$ in $\mathcal L_{Mt}(|\d|mt^2)$ (modulo $\Gamma_0(Mt)$-equivalence), where $t$ is a positive divisor of $M$. If $Q$ is such a quadratic form, we let \[
D_Q := \partial C_Q = \{\omega_Q'\} - \{\omega_Q\} \in \Delta^0
\]
be the degree zero divisor given by the boundary of the geodesic path $C_Q$.

\begin{definition}
Let $R$ be a $\Z[1/6]$-algebra containing the values of $\chi$, $\varphi \in \Symb_{\Gamma_0(M)}(L_{2k-2,\chi^2}(R))$ be a modular symbol, and $t > 0$ be a divisor of $M$. For each integral binary quadratic form $Q \in \mathcal L_{Mt}$ with positive discriminant, we put 
\[
	J_{k,\chi}(\varphi,Q) := \chi_0(Q)\langle \varphi(D_Q),Q^{k-1}\rangle \in R,
\]
where here $\langle \, \rangle = \langle \, \rangle_{2k-2}$ is the pairing on modular symbols as defined in the previous section. For each integer $m\geq 1$ with $\epsilon(-1)^km\equiv 0,1 \pmod 4$ we also define  
\[
s_{k,Mt,\chi}(\varphi,\d; \epsilon(-1)^km) := \sum_{Q \in \mathcal L_{Mt}(|\d|m)/\Gamma_0(Mt)} \omega_\d(Q) \cdot J_{k,\chi}(\varphi,Q).
\]
\end{definition}

In the above definition, $\varphi(D_Q)$ stands for the value at $D_Q$ of any cocycle representing $\varphi$. One can check that $J_{k,\chi}(\varphi,Q)$ does not depend on the choice of such representative for $\varphi$, and that it depends on $Q \in \mathcal L_{Mt}$ only up to $\Gamma_0(Mt)$-equivalence. When $\chi$ is trivial, we will write $J_k$ instead of $J_{k,\chi}$, and similarly $s_{k,Mt}$ instead of $s_{k,Mt,\chi}$. With the help of the quantities $s_{k,Mt,\chi}(\varphi,\d; \epsilon(-1)^km)$, we can now define the cohomological $\d$-th Shintani lifting as follows (compare with the classical definition in \eqref{dShintani-withcharacter}).

\begin{definition}
Let $R$ be a $\Z[1/6]$-algebra containing the values of $\chi$. Define an $R$-linear map 
\[
\Theta_{k,M,\chi,\d}\colon \Symb_{\Gamma_0(M)}(L_{2k-2}(R)) \, \longrightarrow \, R[[q]]
\]
by setting
\[
\Theta_{k,M,\chi,\d}(\varphi) := C(k,\chi,\d) \sum_{\substack{m\geq 1,\\\epsilon(-1)^km\equiv 0,1 (4)}} \left(\sum_{0 < t|M_1} \mu(t)\chi_{\d}\bar{\chi}_0(t)t^{-k-1} s_{k,Mt,\chi}(\varphi,\d; \epsilon(-1)^kmt^2)\right) q^m.
\]
\end{definition}

\begin{proposition}\label{prop:cohomologicalShintani}
Let the notation be as before, and $R$ be a $\Z[1/6]$-algebra.
\begin{itemize}
    \item[i)] For every $\varphi \in \Symb_{\Gamma_0(M)}(L_{2k-2,\chi^2}(R))$, one has $\Theta_{k,M,\chi,\d}(\varphi|\iota) = - \Theta_{k,M,\chi,\d}(\varphi)$.
    \item[ii)] Let $f \in S_{2k}(M,\chi^2)$, and let $\psi_f \in  \Symb_{\Gamma_0(M)}(L_{2k-2,\chi^2}(\C))$ be its associated modular symbol as above. Then $\Theta_{k,M,\chi,\d}(\psi_f) = \Theta_{k,M,\chi,\d}(\psi_f^-) = \theta_{k,M,\chi,\d}(f)$.
    \item[iii)] If $R = K$ is a field of characteristic zero, and $\varphi \in  \Symb_{\Gamma_0(M)}(L_{2k-2,\chi^2}(K))$ belongs to the image of the inclusion \eqref{ES}, then $\Theta_{k,M,\chi,\d}(\varphi) \in S_{k+1/2}^+(M,\chi;K)$. 
\end{itemize}
\end{proposition}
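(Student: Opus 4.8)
The plan is to treat the three parts in order, since (ii) and (iii) both rest on (i).

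For (i), I would unwind the definitions and chase the involution $\iota = \mathrm{diag}(1,-1)$ through the pairing $J_{k,\chi}(\varphi,Q) = \chi_0(Q)\langle \varphi(D_Q), Q^{k-1}\rangle$. Two ingredients do the work. First, $\iota$ sends the form $Q = [a,b,c]$ to $Q\circ\iota = Q|\iota = [a,-b,c]$ (directly from $(X,Y){}^t\iota = (X,-Y)$), and simultaneously swaps and negates the endpoints $\omega_Q,\omega_Q'$, so that $\iota D_Q = -D_{Q\circ\iota}$. Second, the pairing is equivariant: $\langle P_1|\iota, P_2|\iota\rangle = \det(\iota)^{2k-2}\langle P_1, P_2\rangle = \langle P_1, P_2\rangle$, the sign disappearing because $2k-2$ is even. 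Combining these with the rule $(\varphi|\iota)(D) = \varphi(\iota D)|\iota$ and additivity of $\varphi$ gives $J_{k,\chi}(\varphi|\iota, Q) = -J_{k,\chi}(\varphi, Q\circ\iota)$, where I also use $\chi_0(Q) = \chi_0(Q\circ\iota)$ since both forms share the coefficient $c$. As $\iota$ normalizes $\Gamma_0(Mt)$, the map $Q\mapsto Q\circ\iota$ is a bijection of each $\mathcal L_{Mt}(\cdot)/\Gamma_0(Mt)$ preserving $\omega_\d$, so reindexing the sum defining $s_{k,Mt,\chi}$ yields $s_{k,Mt,\chi}(\varphi|\iota,\d;\cdot) = -s_{k,Mt,\chi}(\varphi,\d;\cdot)$, whence $\Theta_{k,M,\chi,\d}(\varphi|\iota) = -\Theta_{k,M,\chi,\d}(\varphi)$.

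For (ii), I would first deduce from (i) that $\Theta_{k,M,\chi,\d}(\psi_f^+) = 0$: since $\psi_f^+|\iota = \psi_f^+$, part (i) forces $\Theta(\psi_f^+) = -\Theta(\psi_f^+)$, which vanishes in characteristic zero, so $\Theta(\psi_f) = \Theta(\psi_f^-)$. The identity $\Theta(\psi_f) = \theta_{k,M,\chi,\d}(f)$ then reduces to matching the cohomological pairing against the classical cycle integral form by form. The key computation is $J_{k,\chi}(\psi_f,Q) = I_{k,\chi}(f,Q)$: inserting $\psi_f(D_Q) = \frac{1}{(2k-2)!}\int_{C_Q} f(\tau)(\tau Y - X)^{2k-2}\,d\tau$ into the pairing and applying the specialization property $\langle \frac{(\tau Y - X)^{2k-2}}{(2k-2)!}, Q^{k-1}\rangle = Q(\tau,1)^{k-1}$ of $\langle\cdot,\cdot\rangle_{2k-2}$ converts it exactly into $\chi_0(Q)\int_{C_Q} f(z) Q(z,1)^{k-1}\,dz$. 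With $J = I$ in hand, the sums $s_{k,Mt,\chi}(\psi_f,\d;\cdot)$ and $r_{k,Mt,\chi}(f;\d,\cdot)$ agree over the same index set of forms, so the series defining $\Theta(\psi_f)$ coincides with the series \eqref{dShintani-withcharacter} defining $\theta_{k,M,\chi,\d}(f)$.

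For (iii), the plus-space support condition is automatic: by construction $\Theta_{k,M,\chi,\d}(\varphi)$ is supported on exponents $m$ with $\epsilon(-1)^k m\equiv 0,1\pmod 4$. What remains is modularity together with $K$-rationality, and I would argue by base change. Since $\Theta_{k,M,\chi,\d}$ commutes with extension of scalars, for an embedding $K\hookrightarrow\C$ the complexification of $\Theta(\varphi)$ equals $\Theta(\varphi_\C)$, and by hypothesis $\varphi_\C$ lies in the image of \eqref{ES}, i.e. $\varphi_\C = \psi_f^{\pm}$ for some $f\in S_{2k}(M,\chi^2)$. By (i) when the sign is $+$, or (ii) when it is $-$, the form $\Theta(\varphi_\C)$ is either $0$ or $\theta_{k,M,\chi,\d}(f)$, hence in every case a genuine element of $S_{k+1/2}^+(M,\chi;\C)$. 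Thus $\Theta(\varphi)\in K[[q]]$ has complex-modular $q$-expansion, and $\Theta(\varphi)\in S_{k+1/2}^+(M,\chi;K)$ follows from the $q$-expansion principle and the existence of a model of the plus space over $K$. I expect this descent to be the main obstacle: parts (i) and (ii) are bookkeeping plus a single pairing identity, whereas (iii) requires care with the rational structure of the plus space and with the precise meaning of ``$\varphi$ belongs to the image of \eqref{ES}'' over a general field $K$ (in particular that $\Theta(\varphi)$ really has coefficients in $K$, which presupposes $K$ contains the Gauss-sum factors entering $C(k,\chi,\d)$). The cleanest route is to fix the $\Q(\chi)$-rational structure on $S_{k+1/2}^+(M,\chi)$ coming from $q$-expansions and invoke its compatibility with base change.
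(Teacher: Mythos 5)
Your proposal is correct and follows essentially the same route as the paper: part (i) via $\iota D_Q=-D_{Q\circ\iota}$, the $\det(\iota)^{2k-2}$-equivariance of the pairing, and the invariance of $\chi_0$ and $\omega_{\d}$ under $Q\mapsto Q\circ\iota$; part (ii) via the identity $\langle(\tau Y-X)^{2k-2},Q^{k-1}\rangle=(2k-2)!\,Q(\tau,1)^{k-1}$ giving $J_{k,\chi}(\psi_f,Q)=I_{k,\chi}(f,Q)$, plus the vanishing of $\Theta$ on the $+$-part. For (iii) the paper only remarks that it ``follows from the proof of ii)''; your base-change/descent argument (and your caveat that $R$ must actually contain $C(k,\chi,\d)$, not merely the values of $\chi$) is a reasonable and slightly more careful spelling-out of the same idea.
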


\begin{proof}
First of all, one easily checks from the definitions that $\iota D_Q = -D_{Q\circ\iota}$. This implies that, for an arbitrary integral binary quadratic form $Q$,
\begin{align*}
    J_{k,\chi}(\varphi|\iota,Q) & = \chi_0(Q)\langle \varphi(\iota D_Q)|\iota, Q^{k-1}\rangle = \chi_0(Q) \langle \varphi(\iota D_Q),(Q|\iota)^{k-1}\rangle = \\
    & = - \chi_0(Q\circ\iota)\langle \varphi(D_{Q\circ\iota}),(Q\circ\iota)^{k-1}\rangle = -J_{k,\chi}(\varphi,Q\circ\iota),
\end{align*}
where in the third equality we use that $Q \circ \iota = Q | \iota$ and that $\chi_0(Q) = \chi_0(Q\circ \iota)$. Statement i) follows from this by the definition of $\Theta_{k,M,\chi,\d}$ (noticing that also $\omega_{\d}(Q) = \omega_{\d}(Q\circ \iota)$, since $Q$ and $Q\circ \iota$ represent the same integers, and that $Q \mapsto Q\circ \iota$ gives bijections on each of the sets $\mathcal L_{Mt}(|\d|m)/\Gamma_0(Mt)$).
    
Next, we observe that    
\[
\langle (\tau Y - X)^{2k-2}, Q^{k-1}\rangle = (2k-2)!\cdot Q(\tau,1)^{k-1}.
\]
This implies that 
\[
I_{k,\chi}(f,Q) = \frac{\chi_0(Q)}{(2k-2)!} \int_{C_Q} f(\tau) \langle (\tau Y-X)^{2k-2},Q^{k-1}\rangle d\tau = \chi_0(Q) \langle \psi_f(D_Q),Q^{k-1}\rangle = J_{k,\chi}(\psi_f, Q),
\]
and hence one easily sees that $\Theta_{k,M,\chi,\d}(\psi_f) = \theta_{k,M,\chi,\d}(f)$. Furthermore, decomposing $\psi_f = \psi_f^+ + \psi_f^-$ into its $+$ and $-$ components, part i) tells us that $\Theta_{k,M,\chi,\d}(\psi_f^+) = 0$, and hence $\Theta_{k,M,\chi,\d}(\psi_f) = \Theta_{k,M,\chi,\d}(\psi_f^-)$, thereby completing the proof of ii). Finally, statement iii) follows already from the proof of ii).
\end{proof}

A cohomology class (or modular symbol) $\Phi \in H^0_c(\Gamma_0(M),L_{2k-2}(\C))$ is said to be {\em defined over} a subring $R$ of $\C$ if $\Phi$ lies in the image of the natural map 
\[
H^0_c(\Gamma_0(M),L_{2k-2}(R)) \, \longrightarrow \, H^0_c(\Gamma_0(M),L_{2k-2}(\C)).
\]
Equivalently, $\Phi$ is defined over $R$ if the corresponding modular symbol (under Ash--Stevens isomorphism) takes values in $L_{2k-2}(R)$.

If $f \in S_{2k}(M,\chi^2)$, recall from \eqref{shimuraperiodsf} that there exist complex numbers $\Omega_f^{\pm} \in \C^{\times}$ such that the normalized modular symbols
\[
\varphi_f^{\pm} := \frac{1}{\Omega_f^{\pm}} \cdot \psi_f^{\pm}, 
\]
are defined over the ring of integers $\mathcal O_f$ of the Hecke field of $f$. We fix once and for all periods $\Omega_f^{\pm} \in \C^{\times}$ with this algebraicity property, and then define  
\begin{equation} \label{eqn:theta-alg}
    \theta^{\mathrm{alg}}_{k,M,\chi,\d}(f) := \frac{1}{\Omega_f^-}\cdot \theta_{k,M,\chi,\d}(f).
\end{equation}

\begin{theorem}\label{thm:theta-alg-ms}
Let $f \in S_{2k}(M,\chi^2)$ be a Hecke eigenform, and let $\psi_f$ and $\varphi_f$ be as above. Then 
\[
\theta^{\mathrm{alg}}_{k,M,\chi,\d}(f) = \Theta_{k,M,\chi,\d}(\varphi_f^-) \in S_{k+1/2}^+(M,\chi;\mathcal O_f).
\]
\end{theorem}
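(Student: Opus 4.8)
The plan is to separate the statement into the asserted identity of power series and the membership in $S_{k+1/2}^+(M,\chi;\mathcal O_f)$, and to treat the former first since it is essentially formal.

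First I would prove the equality $\theta^{\mathrm{alg}}_{k,M,\chi,\d}(f) = \Theta_{k,M,\chi,\d}(\varphi_f^-)$ by linearity. By definition $\varphi_f^- = \tfrac{1}{\Omega_f^-}\psi_f^-$, and $\Theta_{k,M,\chi,\d}$ is $R$-linear with $R=\C$, so $\Theta_{k,M,\chi,\d}(\varphi_f^-) = \tfrac{1}{\Omega_f^-}\,\Theta_{k,M,\chi,\d}(\psi_f^-)$. Part ii) of Proposition \ref{prop:cohomologicalShintani} gives $\Theta_{k,M,\chi,\d}(\psi_f^-) = \theta_{k,M,\chi,\d}(f)$, and then the definition \eqref{eqn:theta-alg} of $\theta^{\mathrm{alg}}_{k,M,\chi,\d}(f)$ yields the identity at once.

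Next I would show that the common value lies in $S_{k+1/2}^+(M,\chi;\mathcal O_f)$. That it is a cusp form in Kohnen's plus space follows from part iii) of Proposition \ref{prop:cohomologicalShintani}: the class $\varphi_f^-$ is a scalar multiple of $\psi_f^-$, hence lies in the image of the inclusion \eqref{ES}, so $\Theta_{k,M,\chi,\d}(\varphi_f^-)$ is a genuine half-integral weight form. The substantive point is the $\mathcal O_f$-integrality of the Fourier coefficients, which I would obtain by unwinding the defining expression for $\Theta_{k,M,\chi,\d}(\varphi_f^-)$ and reducing to the pairings $\langle \varphi_f^-(D_Q),Q^{k-1}\rangle$. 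Since $\varphi_f^-$ is defined over $\mathcal O_f$, each value $\varphi_f^-(D_Q)$ is a divided-power polynomial with coefficients in $\mathcal O_f$, while $Q^{k-1}\in L_{2k-2}^*(\Z)$ has integer coefficients; as the structure constants of $\langle\,\cdot\,,\cdot\,\rangle_{2k-2}$ are $\pm 1$, the pairing lands in $\mathcal O_f$. Together with $\omega_\d(Q)\in\{0,\pm 1\}$ and the character value $\chi_0(Q)$, this keeps each $J_{k,\chi}(\varphi_f^-,Q)$ and each sum $s_{k,Mt,\chi}(\varphi_f^-,\d;\cdot)$ inside $\mathcal O_f$.

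The main obstacle is the bookkeeping of the remaining scalar factors: the normalizing constant $C(k,\chi,\d)$ of \eqref{constant:Ckchid}, which carries the factor $\epsilon^{k+1/2}$ and a ratio of Gauss sums $\mathfrak g(\chi_\d\bar{\chi}_0)/\mathfrak g(\chi_\d)$, together with the level-lowering weights $\mu(t)\chi_\d\bar{\chi}_0(t)t^{-k-1}$ in the sum over $t\mid M_1$. I expect this step to require that $\mathcal O_f$ be taken to contain the values of $\chi$ (equivalently of $\chi_0$) and the relevant Gauss sum, and a check that the apparent denominators $t^{k+1}$ cancel once all $t$-contributions are assembled into a single Fourier coefficient. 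This is transparent in the cleanest case $\chi=\mathbf 1$ with $f$ a newform, where only the $t=1$ term survives and $C(k,\mathbf 1,\d)=(-1)^{[k/2]}2^k$, so integrality of the coefficients over $\mathcal O_f$ reduces directly to the integrality of the pairings above.
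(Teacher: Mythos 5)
Your proof follows the paper's own argument essentially verbatim: the identity is obtained from the $\C$-linearity of $\Theta_{k,M,\chi,\d}$ together with part ii) of Proposition \ref{prop:cohomologicalShintani} and the definition \eqref{eqn:theta-alg}, and the integrality is reduced to the fact that $\varphi_f^-$ is defined over $\mathcal O_f$, so that the pairings $\chi_0(Q)\langle\varphi_f^-(D_Q),Q^{k-1}\rangle$, and hence the sums $s_{k,Mt,\chi}(\varphi_f^-,\d;\cdot)$, lie in $\mathcal O_f$. Your closing concern about the scalar factors $C(k,\chi,\d)$ (with its $\epsilon^{k+1/2}$ and Gauss-sum ratio) and the weights $t^{-k-1}$ is legitimate but is precisely what the paper's own proof silently elides, so $\mathcal O_f$ must implicitly be taken large enough to contain the values of $\chi_0$ and the relevant Gauss sums, with the prime divisors of $M_1$ inverted; this bookkeeping does not affect the substance of the argument.
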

\begin{proof}
The equality $\theta^{\mathrm{alg}}_{k,M,\chi,\d}(f) = \Theta_{k,M,\chi,\d}(\varphi_f^-)$ follows from the previous proposition and the definition of $\varphi_f^-$ and $\theta_{k,M,\chi,\d}^{\mathrm{alg}}(f)$. Now, since $\varphi_f^-$ is defined over $\mathcal O_f$, the values $\chi_0(Q)\langle \varphi_f^-(D_Q), Q^{k-1} \rangle$ belong to $\mathcal O_f$ for all $Q$, hence also the values $s_{k,Mt,\chi}(\varphi_f^-,\d;(-1)^kmt^2)$ belong to $\mathcal O_f$ for all $m$, $t$. As a consequence, all the coefficients of $\Theta_{k,M,\chi,\d}(\varphi_f^-)$ lie in $\mathcal O_f$ and the theorem is proved.
\end{proof}

As a direct consequence of Corollary \ref{cor:lift-falpha}, and of the cohomological $\d$-th Shintani lifting introduced in this section, we have the following statement.

\begin{corollary}
Let $k\geq 1$ be an integer, $N \geq 1$ be an odd integer, $\chi$ be a Dirichlet character modulo $N$, and $p>2$ be a prime with $p\nmid N$. Let $f \in S_{2k}^{new}(N,\chi^2)$ be a normalized newform ordinary at $p$, and let $f_{\alpha} \in S_{2k}(Np,\chi^2)$ be its ordinary $p$-stabilization. Let $\d$ be a fundamental discriminant with $\epsilon(-1)^k\d > 0$, $\mathrm{gcd}(\d,N)=1$ and $\d \equiv 0 \pmod p$, and let $\dd$ be a discriminant such that $\d\dd > 0$ and $p\nmid \dd$. Then 
\[
a_{|\dd|}(\Theta_{k,Np,\chi,\d}(\psi_{f_{\alpha}})) = (1- \beta \chi_{\dd}\bar{\chi}_0(p)p^{-k})a_{|\dd|}(\theta_{k,N,\chi,\d}(f)),
\]
and hence
\[
\frac{1}{\Omega_f^-} \cdot a_{|\dd|}(\Theta_{k,Np,\chi,\d}(\psi_{f_{\alpha}})) = (1- \beta \chi_{\dd}\bar{\chi}_0(p)p^{-k}) a_{|\dd|}(\theta_{k,N,\chi,\d}^{\mathrm{alg}}(f)).
\]
\end{corollary}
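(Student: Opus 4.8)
The plan is to obtain this statement as an immediate consequence of Proposition \ref{prop:cohomologicalShintani}(ii) together with Corollary \ref{cor:lift-falpha}, so that no genuinely new computation is required; the content is entirely carried by those two results, and the only care needed is bookkeeping of characters and periods. First I would apply Proposition \ref{prop:cohomologicalShintani}(ii) at level $Np$. Since $f_{\alpha} \in S_{2k}(Np,\chi^2)$ is a Hecke eigenform with associated modular symbol $\psi_{f_{\alpha}}$, part ii) gives the identity of $q$-expansions $\Theta_{k,Np,\chi,\d}(\psi_{f_{\alpha}}) = \theta_{k,Np,\chi,\d}(f_{\alpha})$, and in particular, comparing $|\dd|$-th Fourier coefficients,
\[
a_{|\dd|}(\Theta_{k,Np,\chi,\d}(\psi_{f_{\alpha}})) = a_{|\dd|}(\theta_{k,Np,\chi,\d}(f_{\alpha})).
\]

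Next I would invoke Corollary \ref{cor:lift-falpha}. The hypotheses on $\d$ (namely $\epsilon(-1)^k\d>0$, $\mathrm{gcd}(\d,N)=1$, and $\d\equiv 0 \pmod p$) match the running assumptions of Section \ref{sec:p-stab}, and since $\dd$ satisfies $\d\dd>0$ and $p\nmid\dd$ we have in particular $p^2\nmid\dd$, so the second displayed formula of that corollary applies and yields
\[
a_{|\dd|}(\theta_{k,Np,\chi,\d}(f_{\alpha})) = (1 - \beta\chi_{\dd}\bar{\chi}(p)p^{-k})\, a_{|\dd|}(\theta_{k,N,\chi,\d}(f)).
\]
The one point to reconcile is that Corollary \ref{cor:lift-falpha} is phrased with $\bar{\chi}(p)$, whereas the target identity features $\bar{\chi}_0(p)$; but since $p\nmid N$, the prime $p$ is coprime to both the modulus $N$ of $\chi$ and the conductor $N_0$ of $\chi_0$, so $\chi(p)=\chi_0(p)$ and hence $\bar{\chi}(p)=\bar{\chi}_0(p)$. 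Combining the two displays gives the first asserted equality.

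For the second equality I would simply divide through by $\Omega_f^-$, the period of the newform $f$ at level $N$ fixed via \eqref{shimuraperiodsf}. By the definition in \eqref{eqn:theta-alg}, dividing the form $\theta_{k,N,\chi,\d}(f)$ — and hence each of its Fourier coefficients — by $\Omega_f^-$ produces $\theta^{\mathrm{alg}}_{k,N,\chi,\d}(f)$, so that $\tfrac{1}{\Omega_f^-}\,a_{|\dd|}(\theta_{k,N,\chi,\d}(f)) = a_{|\dd|}(\theta^{\mathrm{alg}}_{k,N,\chi,\d}(f))$. Substituting this into the first equality gives the claim. I do not anticipate any substantive obstacle; the only thing to keep straight is that $\Omega_f^-$ throughout refers to the period attached to the level-$N$ newform $f$ and not to any period of its $p$-stabilization $f_{\alpha}$, which is exactly why the algebraic normalization on the right-hand side is that of $f$.
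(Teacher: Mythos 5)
Your proposal is correct and is essentially identical to the paper's own proof: the first equality is Proposition \ref{prop:cohomologicalShintani}(ii) applied to the eigenform $f_{\alpha}$ at level $Np$, the second is the $p^2\nmid\dd$ case of Corollary \ref{cor:lift-falpha}, and the final display is just the definition \eqref{eqn:theta-alg} of $\theta^{\mathrm{alg}}_{k,N,\chi,\d}(f)$. Your remark that $\bar{\chi}(p)=\bar{\chi}_0(p)$ because $p\nmid N$ is a correct (and welcome) piece of bookkeeping that the paper passes over silently.
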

\begin{proof}
Indeed, we have 
\[
a_{|\dd|}(\Theta_{k,Np,\chi,\d}(\psi_{f_{\alpha}})) = a_{|\dd|}(\theta_{k,Np,\chi,\d}(f_{\alpha})) = (1 - \beta \chi_{\dd}\bar{\chi}_0(p)p^{-k}) a_{|\dd|}(\theta_{k,N,\chi,\d}(f)),
\]
where the first equality follows from part ii) of Proposition \ref{prop:cohomologicalShintani}, and the second one from Corollary \ref{cor:lift-falpha}. This proves the first identity. The second one follows immediately from the first by the definition of $\theta_{k,N,\chi,\d}^{\mathrm{alg}}$.
\end{proof}

\subsection{The $\d$-th Shintani lifting of a Hida family} \label{sec:LambdaShintani}

Fix now a Hida family $(\mathcal R_{\mathbf f}, \mathcal U_{\mathbf f},\mathcal U_{\mathbf f}^{\mathrm{cl}},\mathbf f)$ of tame level $N$ and tame character $\chi^2$ modulo $N$. As usual, we write $\chi_0$ be the primitive character associated with $\chi$, $N_0$ for its conductor and $N_1 = N/N_0$. We assume that $\mathrm{gcd}(N_0,N_1)=1$ and write $\epsilon = \chi(-1)$. By definition of Hida family, there exists an integer $r_0$ such that every classical point $\kappa \in \mathcal U_{\mathbf f}^{\mathrm{cl}}$ has weight $2k-2$ with $2k \equiv 2r_0 \pmod{p-1}$. Notice that this integer is uniquely determined only modulo $(p-1)/2$. We define 
\[
    \widetilde{\mathcal R}_{\mathbf f} := \mathcal R_{\mathbf f} \otimes_{\Lambda,\sigma} \Lambda, 
\]
where $\sigma\colon \Lambda \to \Lambda$ is the $\mathcal O$-algebra isomorphism induced by $[t] \mapsto [t^2]$ on $1+p\Z_p$. In particular, notice that $\widetilde{\mathcal R}_{\mathbf f}$ is isomorphic to $\mathcal R_{\mathbf f}$ as $\mathcal O$-algebras. We equip $\widetilde{\mathcal R}_{\mathbf f}$ with the structure of $\Lambda$-algebra via the map $\lambda \mapsto 1 \otimes \lambda$.  The natural homomorphism 
\[
    \mathcal R_{\mathbf f} \, \longrightarrow \, \widetilde{\mathcal R}_{\mathbf f}, \quad \alpha \, \longmapsto \, \alpha \otimes 1
\]
is an isomorphism of $\mathcal O$-algebras, but it is {\em not} even a homomorphism of $\Lambda$-algebras. Indeed, similarly as above this is reflected in the fact that the induced map 
\[
    \pi\colon \mathcal X(\widetilde{\mathcal R}_{\mathbf f}) \, \longmapsto \, \mathcal X(\mathcal R_{\mathbf f})
\] 
on weight spaces doubles the weights.
 
Let $\widetilde{\mathcal U}_{\mathbf f} := \pi^{-1}(\mathcal U_{\mathbf f})$ and $\widetilde{\mathcal U}_{\mathbf f}^{\mathrm{cl}} := \pi^{-1}(\mathcal U_{\mathbf f}^{\mathrm{cl}})$. Notice that not all the weights of classical points in $\widetilde{\mathcal U}_{\mathbf f}^{\mathrm{cl}}$ are contained in a single residue class modulo $p-1$.  Indeed, a point $\tilde{\kappa} \in \widetilde{\mathcal U}_{\mathbf f}^{\mathrm{cl}}$ has weight $k-1$ for some integer $k$ such that $2k\equiv 2r_0 \pmod{p-1}$. Therefore either $k\equiv r_0\pmod{p-1}$ or $k\equiv r_0+\frac{p-1}{2}\pmod{p-1}$, yielding a partition of $\widetilde{\mathcal U}_{\mathbf f}^{\mathrm{cl}}$ as a union of two sets. In the following discussion we write $\widetilde{\mathcal U}_{\mathbf f}^{\mathrm{cl}}(r_0)$ for the subset of classical points in $\widetilde{\mathcal U}_{\mathbf f}^{\mathrm{cl}}$ whose weights are of the form $k-1$ with $k \equiv r_0 \pmod{p-1}$.

\begin{remark} \label{rmk:classicaltilde}
    It is worth to explain the meaning of the above choice, in light of the universal ordinary $p$-adic Hecke algebra $\mathcal R_N$ discussed in Remark \ref{rmk:UniversalVersion}. Indeed, we can consider its metaplectic cover $\widetilde{\mathcal R}_N := \mathcal R_N \otimes_{\Lambda_N,\sigma} \Lambda_N$, 
    where the tensor product is taken with respect to the $\mathcal O$-algebra homomorphism $\sigma\colon \Lambda_N \to \Lambda_N$ induced by the group homomorphism $t \mapsto t^2$ on $\Z_{p,N}^{\times}$. For any choice of square root $\tilde\chi$ of $\chi^2\omega^{2r_0-2}$ in $\widehat\Delta_{Np}$ we have a $\Lambda_N$-algebra structure for $\widetilde{\mathcal R}_{\mathbf f}$. This choice uniquely determines the $\widetilde{\mathcal R}_N$-algebra structure of $\mathcal R_\hf$ because of the following diagram:
    \[
        \xymatrix{
                &\mathcal R_N    \ar[rr]^{\mathrm{loc}_{\mathbf f}} && \mathcal R_{\mathbf{f}} \\
            \Lambda_N \ar[ur] \ar[dr]_\sigma \ar[rr]^{\mathrm{loc}_{\chi^2\omega^{2r_0-2}}} && \Lambda \ar[ur] \ar[dr]^{\sigma} \\
            & \Lambda_N \ar[rr]_{\mathrm{loc}_{\tilde\chi}} && \Lambda 
        }
    \]
    This amounts to the choice of a single irreducible component $\mathcal X(\mathcal R_\hf)$ of the bigger space $\mathcal X(\mathcal R_N)$. 
    
    Since $p\nmid N$, we have a natural decomposition $\widehat\Delta_{Np} \simeq \widehat\Delta_N\times \widehat\Delta_p$, hence $\tilde\chi$ is uniquely determined by the choice of a square root of $\chi^2$ in $\widehat\Delta_N$ and a square root of $\omega^{2r_0-2}$ in $\widehat\Delta_p$. In our setting $\chi$ is already chosen in the classical context, so we are left with the choice of $r_0$.  Notice that our choice of integer $r_0$ determines a class modulo $p-1$ which corresponds to the $\Lambda_N$-structure of $\widetilde{\mathcal R}_{\mathbf f}$ induced by the choice $\tilde\chi := \chi\omega^{r_0-1}$.
\end{remark}

\begin{lemma}\label{lemma:JQ}
    Let $(\mathcal R_{\mathbf f},\mathcal U_{\mathbf f}, \mathcal U_{\mathbf f}^{\mathrm{cl}}, \mathbf f)$ be a Hida family and keep the notation as in the above discussion. For each $Q \in \mathcal L_{Np}^{[p]}$ with positive discriminant divisible by $p$, there is a unique $\mathcal R_{\mathbf f}$-homomorphism 
    \[
        J_{Q,\mathbf f}^{r_0}\colon \mathbf D_{\mathbf f} \, \longrightarrow \, \widetilde{\mathcal R}_{\mathbf f}
    \]
    such that for all $\tilde{\kappa} \in \widetilde{\mathcal U}_{\mathbf f}^{\mathrm{cl}}(r_0)$ of weight $k-1$ one has
    \[
        \tilde{\kappa}(J^{r_0}_{Q,\mathbf f}(\mu)) = \chi_0(Q)\langle \mathrm{sp}_{\pi(\tilde\kappa)}(\mu), Q^{k-1} \rangle \quad \text{for all } \mu \in \mathbf D_{\mathbf f}.
    \]
\end{lemma}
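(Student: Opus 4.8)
The plan is to build $J_{Q,\mathbf f}^{r_0}$ as the integral against $\mu$ of a single continuous $\Lambda$-valued character attached to $Q$, and then to read off the interpolation formula by commuting evaluation at $\tilde\kappa$ with the integral. First I would unwind the right-hand side. For $\tilde\kappa\in\widetilde{\mathcal U}_{\mathbf f}^{\mathrm{cl}}(r_0)$ of weight $k-1$ and $\kappa=\pi(\tilde\kappa)$ of weight $2k-2$, the specialization formula together with the identity $\langle(xY-yX)^{2k-2}/(2k-2)!,P\rangle=P(x,y)$ for the pairing gives, for $\mu\otimes\alpha\in\mathbf D_{\mathbf f}$,
\[
\chi_0(Q)\langle\mathrm{sp}_{\kappa}(\mu\otimes\alpha),Q^{k-1}\rangle=\chi_0(Q)\,\kappa(\alpha)\int_{\Zp\times\Zp^{\times}}Q(x,y)^{k-1}\,d\mu(x,y),
\]
since $Q^{k-1}$ is homogeneous of degree $2k-2$. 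The crucial structural input is that such $Q$ takes $p$-adic \emph{unit} values on $\Zp\times\Zp^{\times}$: writing $Q=[aNp,b,c]$, one has $p\mid aNp$ and (since $p^2$ divides the discriminant) $p\mid b$, so that $Q(x,y)\equiv cy^2\pmod p$, a unit because $p\nmid c$. (These are precisely the $a$-type forms $\mathcal L_{Np}^{[p],a}$ of Corollary \ref{cor:p2}, i.e. the ones surviving in the lifting.)

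This unit property makes $k\mapsto Q(x,y)^{k-1}$ interpolable. Writing $Q(x,y)=\omega(Q(x,y))\langle Q(x,y)\rangle$ with $\langle Q(x,y)\rangle\in\Gamma$, I would set
\[
\lambda_Q(x,y):=\omega(Q(x,y))^{r_0-1}\cdot\bigl(1\otimes[\langle Q(x,y)\rangle]\bigr)\in\widetilde{\mathcal R}_{\mathbf f},\qquad(x,y)\in\Zp\times\Zp^{\times},
\]
a continuous $\Lambda$-valued function, and define
\[
J_{Q,\mathbf f}^{r_0}(\mu\otimes\alpha):=\chi_0(Q)\,(\alpha\otimes1)\int_{\Zp\times\Zp^{\times}}\lambda_Q(x,y)\,d\mu(x,y),
\]
the integral of a continuous $\Lambda$-valued function against the $\Zp$-valued measure $\mu$, landing in $\Lambda\subset\widetilde{\mathcal R}_{\mathbf f}$. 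The interpolation property then follows by commuting the continuous evaluation $\tilde\kappa$ with the integral: since $\tilde\kappa(1\otimes[\langle Q(x,y)\rangle])=\langle Q(x,y)\rangle^{k-1}$, $\tilde\kappa(\alpha\otimes1)=\kappa(\alpha)$, and $k\equiv r_0\pmod{p-1}$ forces $\omega(Q(x,y))^{r_0-1}=\omega(Q(x,y))^{k-1}$ on $\widetilde{\mathcal U}_{\mathbf f}^{\mathrm{cl}}(r_0)$, the two factors recombine as $Q(x,y)^{k-1}$ and recover the displayed expression.

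The step I expect to be the main obstacle is the well-definedness of $J_{Q,\mathbf f}^{r_0}$ on the balanced tensor product $\mathbf D_{\mathbf f}=\mathbf D\otimes_{\Zp[[\Zp^{\times}]]}\mathcal R_{\mathbf f}$, i.e. compatibility with $\mu s\otimes\alpha=\mu\otimes s\alpha$. Here the degree-$2$ homogeneity of $Q$ is decisive: under $(x,y)\mapsto(ux,uy)$ with $u\in\Zp^{\times}$ one has $Q(ux,uy)=u^2Q(x,y)$, hence
\[
\lambda_Q(ux,uy)=\omega(u)^{2(r_0-1)}\,\bigl(1\otimes[\langle u\rangle^2]\bigr)\,\lambda_Q(x,y),
\]
and $1\otimes[\langle u\rangle^2]=1\otimes\sigma([\langle u\rangle])=\bigl([\langle u\rangle]\cdot1\bigr)\otimes1$ is exactly the $\sigma$-twist defining $\widetilde{\mathcal R}_{\mathbf f}$. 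Thus the factor picked up by $\lambda_Q$ under scaling equals the action of the corresponding group-like element through the $\Lambda_N$-structure on $\mathcal R_{\mathbf f}$ fixed by the choice $\tilde\chi=\chi\omega^{r_0-1}$; this matching — which is precisely why the metaplectic cover and the choice of $r_0$ enter (cf. Remark \ref{rmk:classicaltilde}) — makes the balancing hold, and $\mathcal R_{\mathbf f}$-linearity is then immediate from $(\beta\alpha\otimes1)=(\beta\otimes1)(\alpha\otimes1)$. Finally, uniqueness is automatic: $\widetilde{\mathcal R}_{\mathbf f}$ is a finite flat domain over $\Lambda$ and $\widetilde{\mathcal U}_{\mathbf f}^{\mathrm{cl}}(r_0)$ is dense in $\widetilde{\mathcal U}_{\mathbf f}$, so any element of $\widetilde{\mathcal R}_{\mathbf f}$ vanishing at all these points is zero, and the interpolation property therefore pins down $J_{Q,\mathbf f}^{r_0}(\mu)$ for every $\mu$.
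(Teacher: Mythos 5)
Your construction is the paper's own proof in only slightly different notation: integrating the continuous character $\lambda_Q(x,y)=\omega(Q(x,y))^{r_0-1}[\langle Q(x,y)\rangle]$ against $\mu$ is exactly the paper's composite $\mathrm{loc}_{r_0}\circ j_Q$ of the pushforward measure along $Q$ (using the same unit-value observation) with the twist by $\omega^{r_0-1}$, and the extension to $\mathbf D_{\mathbf f}$, the balancing via degree-$2$ homogeneity matching the $\sigma$-twist, and the density argument for uniqueness all coincide with the paper's argument. Your explicit verification of well-definedness on the balanced tensor product is in fact spelled out more carefully than in the paper, which only records the identity $j_{Q,\mathbf f}([t]\nu)=\mathrm{loc}_{r_0}([t]^2 j_Q(\nu))$ and leaves the rest implicit.
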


\begin{proof}
Fix a quadratic form $Q$ as in the statement. The uniqueness of $J_Q$ is clear because $\widetilde{\mathcal U}_{\mathbf f}^{\mathrm{cl}}(r_0)$ is dense in $\widetilde{\mathcal U}_{\mathbf f}$. To prove the existence, we adapt the arguments in the proof of \cite[Lemma 6.1]{Stevens}.

Recall that there is a canonical isomorphism $\Z_p[[\Z_p^{\times}]] \simeq \mathrm{Meas}(\Z_p^{\times})$, $j \mapsto dj$. Using this, we first define a map 
\[
j_Q\colon \mathbf D \, \longrightarrow \, \Z_p[[\Z_p^{\times}]], \quad \nu \mapsto j_Q(\nu),
\]
where $j_Q(\nu)$ is determined by requiring that 
\[
\int_{\Z_p^{\times}} f \, dj_Q(\nu) = \int_{\Z_p\times\Z_p^{\times}} f(Q(x,y)) \, d\nu(x,y)
\]
for all continuous functions $f\colon \Z_p^{\times} \to \Z_p$. Observe that since $Q \in \mathcal L_{Np}^{[p]}$ has discriminant divisible by $p$, we have $Q(x,y) \in \Z_p^{\times}$ for all $(x,y) \in \Z_p\times\Z_p^{\times}$, so this is well-defined. We notice that $j_Q$ is a $\Z_p$-linear map, and that $j_Q([t]\nu) = [t^2]\cdot j_Q(\nu)$ for $t \in \Z_p^{\times}$. We compose the map $j_Q$ with the map $\mathrm{loc}_{r_0} := \mathrm{loc}_{\tilde\chi}\mid_{\Lambda_1}$, i.e.
\begin{align*}
    \mathrm{loc}_{r_0}\colon \Lambda_1 \, &\to \, \Lambda \\
[x] & \mapsto \omega^{r_0-1}(x) [\langle x\rangle] \quad \text{for } x \in \Z_p^{\times}.
\end{align*}
In this way we get a map
\[
j_{Q,\mathbf f}\colon \mathbf D \, \longrightarrow \Lambda, \quad \nu \, \longmapsto \, j_{Q,\mathbf f}(\nu) = \mathrm{loc}_{r_0}(j_Q(\nu)).
\]
which, in terms of measures, is characterized by requiring that
\[
    \int_{\Gamma} \varphi \, dj_{Q,\mathbf f}(\nu) = \int_{\Z_p^{\times}} \varphi^{\dagger} \, dj_Q(\nu)
\]
for all continuous functions $\varphi\colon \Gamma \to \Z_p$, where $\varphi^{\dagger}(x) = \omega^{r_0-1}(x)\varphi(\langle x \rangle)$ for $x \in \Z_p^{\times}$. Observe that $j_{Q,\mathbf f}([t]\nu)=\mathrm{loc}_{r_0}([t]^2j_Q(\nu))$ for all $t \in \Z_p^{\times}$, $\nu \in \mathbf D$. The map $j_{Q,\mathbf f}$ extends by $\mathcal R_{\mathbf f}$-linearity to a unique $\mathcal R_{\mathbf f}$-linear map 
\[
    J_{Q,\mathbf f}\colon \mathbf D_{\mathbf f} = \mathbf D \otimes_{\Z_p[[\Z_p^{\times}]]} \mathcal R_{\mathbf f}  \, \longrightarrow \, \widetilde{\mathcal R}_{\mathbf f} = \mathcal R_{\mathbf f} \otimes_{\Lambda,\sigma} \Lambda
\]
such that 
\[
J_{Q,\mathbf f}(\nu \otimes \alpha) = \chi_0(Q) \cdot \alpha \otimes_{\sigma} j_{Q,\mathbf f}(\nu) \quad \text{for } \nu \in \mathbf D, \alpha \in \mathcal R_{\mathbf f}.
\]

Now, let $\mu = \nu \otimes \alpha \in \mathbf D_{\mathbf f} = \mathbf D \otimes \mathcal R_{\mathbf f}$, and $\tilde{\kappa} \in \widetilde{\mathcal U}_{\mathbf f}^{\mathrm{cl}}(r_0)$ be a classical point of weight $k-1$, and let $\kappa = \pi(\tilde{\kappa}) \in \mathcal U_{\mathbf f}^{\mathrm{cl}}$. Then we have 
\begin{align*}
    \tilde{\kappa}(J_{Q,\mathbf f}^{r_0}(\mu)) & = \chi_0(Q) \cdot \tilde{\kappa}(\alpha \otimes 1) \tilde{\kappa}(1 \otimes j_{Q,\mathbf f}(\nu)) = \chi_0(Q) \kappa(\alpha) \cdot \int_{\Gamma} \tilde{\kappa} \, dj_{Q,\mathbf f}(\nu) = \\
 &= \chi_0(Q) \kappa(\alpha) \cdot \int_{\Z_p^{\times}} \tilde{\kappa}^{\dagger} \, dj_Q(\nu)
 = \chi_0(Q)\kappa(\alpha)\cdot \int_{\Z_p\times\Z_p^{\times}} \omega^{r_0-1}(Q(x,y))\tilde{\kappa}(\langle Q(x,y) \rangle) d\nu(x,y) = \\
 & = \chi_0(Q) \kappa(\alpha)\cdot \int_{\Z_p\times\Z_p^{\times}} \omega(Q(x,y))^{r_0-k} Q(x,y)^{k-1} d\nu(x,y) = \\
 & = \chi_0(Q) \kappa(\alpha)\cdot \int_{\Z_p\times\Z_p^{\times}}  \left\langle \frac{(xY-yX)^{2k-2}}{(2k-2)!}, Q^{k-1}\right\rangle d\nu(x,y) = \\
 & = \chi_0(Q) \left\langle \kappa(\alpha)\cdot \int_{\Z_p\times\Z_p^{\times}}  \frac{(xY-yX)^{2k-2}}{(2k-2)!} d\nu(x,y), Q^{k-1} \right\rangle = \chi_0(Q)\langle \mathrm{sp}_{\kappa}(\mu), Q^{k-1}\rangle,
\end{align*}
as we wanted to prove.
\end{proof}

\begin{definition}\label{def:JfQ}
Let $\Phi_{\mathbf f} \in \mathrm{Symb}_{\Gamma_0(N)}(\mathbf D_{\mathbf f})$ be the $\Lambda$-adic modular symbol attached to $\mathbf f$ as in Theorem \ref{thm:Lambda-adic-MS}. For each $Q \in \mathcal L_{Np}^{[p]}$ with positive discriminant divisible by $p$, we define
\[
J^{r_0}(\mathbf f; Q) := J_{Q,\mathbf f}^{r_0}(\Phi_{\mathbf f}(D_Q)) \in \widetilde{\mathcal R}_{\mathbf f}.
\] 
\end{definition}

An easy computation shows that $J^{r_0}(\mathbf f; Q)$ depends only on the $\Gamma_0(Np)$-equivalence class of $Q$. The notation suggests that $J^{r_0}(\mathbf f;-)$ depends only on the Hida family $\mathbf f$ (and on the choice of $r_0$), but the definition clearly shows that it depends on the modular symbol $\Phi_{\mathbf f}$, which is only determined up to the choice of $p$-adic periods as explained in Theorem \ref{thm:Lambda-adic-MS}. Despite of this, we prefer to write $J^{r_0}(\mathbf f;-)$ instead of $J^{r_0}(\Phi_{\mathbf f}; -)$.

Now let $\d$ be a fundamental discriminant {\em divisible by $p$} such that $\mathrm{gcd}(N_0,\d)=1$.

\begin{definition}\label{def:sf}
If $m\geq 1$ is an integer, we define 
\[
s_N^{r_0}(\mathbf f; \d, \chi_{\d}(-1) m) := \sum_{Q \in \mathcal L_{Np}^{[p]}(|\d|m)/\Gamma_0(Np)} \omega_{\d}(Q)\cdot J^{r_0}(\mathbf f; Q) \in \widetilde{\mathcal R}_{\mathbf f}.
\]
\end{definition}

Observe that the function $s_N^{r_0}(\mathbf f; \d, \chi_{\d}(-1) m)$ is identically zero if $\chi_{\d}(-1)m \not\equiv 0,1 \pmod 4$, since in this case $|\d|m = \d \chi_{\d}(-1)m$ is not a discriminant. And when $\chi_{\d}(-1)m \equiv 0,1 \pmod 4$, the function $s_N^{r_0}(\mathbf f;\d, \chi_{\d}(-1)m)$ on $\mathcal X(\widetilde{\mathcal R}_{\mathbf f})$ is key to interpolate the $m$-th Fourier coefficients of the $\d$-th Shintani liftings of the specializations of $\mathbf f$. Indeed, we define the $\Lambda$-adic $\d$-th Shintani lifting of $\mathbf f$ as the power series with coefficients in $\widetilde{\mathcal R}_{\mathbf f}$ given by
\begin{equation}\label{def:LambdaShintani}
    \Theta_{\d}^{r_0}(\mathbf f) := \sum_{m\geq 1} \left(\sum_{0<t\mid N_1} \mu(t)\chi_{\d}\bar{\chi}_0(t)t^{-2}\omega(t)^{r_0-1}[\langle t\rangle]^{-1}s_N^{r_0}(\mathbf f; \d, \chi_{\d}(-1) mt^2)\right) q^m \in \widetilde{\mathcal R}_{\mathbf f}[[q]],
\end{equation}
where $[\langle t\rangle] \in \Lambda$ is the group-like element associated with $\langle t \rangle \in \Gamma$ (notice that $p \nmid N_1$). Observe that $\Theta_{\d}^{r_0}(\mathbf f)$ vanishes identically if $\epsilon(-1)^{r_0}\d < 0$ because the quantities $s_N^{r_0}(\mathbf f; \d, \chi_{\d}(-1)mt^2)$ are all zero (compare with Remark \ref{rmk:vanishing}).  Note also that the definition of $\Theta_{\d}^{r_0}(\mathbf f)$ depends on the residue class of $r_0$ modulo $p-1$, as suggested by the notation (in fact, note that Lemma \ref{lemma:JQ} and Definitions \ref{def:JfQ} and \ref{def:sf} have the same dependence). As explained in Remark \ref{rmk:classicaltilde}, we have two possible choices for this residue class, corresponding to different branches in the metaplectic cover, and resulting in two different $\Lambda$-adic liftings. The interpolation property for $\Theta_{\d}^{r_0}(\mathbf f)$ then reads as follows:

\begin{theorem}\label{thm:LambdaShintani-interpolation}
    Keep the notation notation as above. For every classical point $\tilde{\kappa} \in \widetilde{\mathcal U}_{\mathbf f}^{\mathrm{cl}}(r_0)$ of weight $k-1$, we have 
    \[
        \Theta_{\d}^{r_0}(\mathbf f)(\tilde{\kappa}) = C(k,\chi,\d)^{-1}  \Theta_{k,Np,\d}(\Phi_{\mathbf f,\kappa}) = 
        \Omega_{\kappa} \cdot C(k,\chi,\d)^{-1} \theta_{k,Np,\chi,\d}^{\mathrm{alg}}(\mathbf f(\kappa)),
    \]
    where $\kappa = \pi(\tilde{\kappa}) \in \mathcal U_{\mathbf f}^{\mathrm{cl}}$, and $C(k,\chi,\d)$ is the constant defined in \eqref{constant:Ckchid}.
\end{theorem}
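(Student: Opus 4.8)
The plan is to verify the two claimed equalities separately, reducing everything to the interpolation properties established in the preceding lemmas. The heart of the matter is to match the $\Lambda$-adic definition of $\Theta_{\d}^{r_0}(\mathbf f)$ in \eqref{def:LambdaShintani} with the cohomological $\d$-th Shintani lifting $\Theta_{k,Np,\chi,\d}$ introduced in Section \ref{sec:cohomological}, specialized at the classical point $\kappa = \pi(\tilde\kappa)$. First I would fix a classical point $\tilde\kappa \in \widetilde{\mathcal U}_{\mathbf f}^{\mathrm{cl}}(r_0)$ of weight $k-1$ and apply $\tilde\kappa$ to the $m$-th coefficient of $\Theta_{\d}^{r_0}(\mathbf f)$. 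By Lemma \ref{lemma:JQ} applied to $\Phi_{\mathbf f}(D_Q)$, together with Definition \ref{def:JfQ}, one has
\[
    \tilde\kappa(J^{r_0}(\mathbf f;Q)) = \chi_0(Q)\langle \mathrm{sp}_{\kappa,*}(\Phi_{\mathbf f})(D_Q), Q^{k-1}\rangle = \chi_0(Q)\langle \Phi_{\mathbf f,\kappa}(D_Q), Q^{k-1}\rangle = J_{k,\chi}(\Phi_{\mathbf f,\kappa}, Q),
\]
so that summing over $Q \in \mathcal L_{Np}^{[p]}(|\d|m)/\Gamma_0(Np)$ weighted by $\omega_{\d}(Q)$ turns $\tilde\kappa(s_N^{r_0}(\mathbf f;\d,\chi_{\d}(-1)m))$ into $s_{k,Np,\chi}(\Phi_{\mathbf f,\kappa},\d;\epsilon(-1)^km)$. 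Here one uses that $\chi_{\d}(-1) = \epsilon(-1)^k$ under our running sign convention (since $\epsilon(-1)^k\d>0$), and that the restriction to $\mathcal L_{Np}^{[p]}$ (rather than all of $\mathcal L_{Np}$) is harmless because $\omega_{\d}(Q)$ vanishes on the non-$p$-primitive part $\mathcal L_{Np}^{(p)}$, exactly as exploited in the proof of Proposition \ref{prop:thetaN=thetaNp}.

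Next I would handle the twisting factors. The subtlety is that the classical definition \eqref{dShintani-withcharacter} of $\theta_{k,Np,\chi,\d}$ carries a factor $t^{-k-1}$ inside the sum over $t \mid N_1$, whereas \eqref{def:LambdaShintani} carries instead $t^{-2}\omega(t)^{r_0-1}[\langle t\rangle]^{-1}$. Applying $\tilde\kappa$ to the group-like element $[\langle t\rangle]^{-1}$ gives $\tilde\kappa([\langle t\rangle])^{-1} = \langle t\rangle^{-(k-1)}$ because $\tilde\kappa$ has weight $k-1$, and combining this with $\omega(t)^{r_0-1}$ requires reconciling $\omega(t)^{r_0-1}\langle t\rangle^{-(k-1)}$ with the desired power $t^{1-k}$. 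Since $t \mid N_1$ is prime to $p$ one has $t = \omega(t)\langle t\rangle$, hence $t^{1-k} = \omega(t)^{1-k}\langle t\rangle^{1-k}$; the mismatch $\omega(t)^{r_0-1}$ versus $\omega(t)^{1-k}$ disappears precisely because $k \equiv r_0 \pmod{p-1}$ for points in $\widetilde{\mathcal U}_{\mathbf f}^{\mathrm{cl}}(r_0)$, so $\omega(t)^{r_0-1} = \omega(t)^{k-1}$ and $\omega(t)^{r_0-1}\langle t\rangle^{-(k-1)} = \omega(t)^{k-1}\langle t\rangle^{1-k} = t^{-2}\cdot t^{k-1}\cdot\omega(t)^{k-1}\langle t\rangle^{\,?}$. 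I would carry out this Teichm\"uller bookkeeping carefully so that the product $t^{-2}\cdot\omega(t)^{r_0-1}\langle t\rangle^{1-k}$ equals $t^{-k-1}$, which is the content that makes the $t$-summands agree term by term. This is the step I expect to be the main obstacle, not because it is deep but because the exponent conventions (and the precise role of $r_0$ as a chosen branch of the metaplectic cover) must be tracked without error; getting a sign or a power of $\omega$ wrong here is the most likely failure mode.

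With the coefficientwise identification in hand, the first equality $\Theta_{\d}^{r_0}(\mathbf f)(\tilde\kappa) = C(k,\chi,\d)^{-1}\Theta_{k,Np,\d}(\Phi_{\mathbf f,\kappa})$ follows by comparing \eqref{def:LambdaShintani} with the definition of $\Theta_{k,M,\chi,\d}$ from Section \ref{sec:cohomological} with $M = Np$: the $\Lambda$-adic series omits the overall constant $C(k,\chi,\d)$, whence the factor $C(k,\chi,\d)^{-1}$ on the right. The second equality is then immediate from the interpolation property of the $\Lambda$-adic modular symbol. Indeed, Theorem \ref{thm:Lambda-adic-MS} gives $\Phi_{\mathbf f,\kappa} = \Omega_{\kappa}\cdot\varphi_{\mathbf f(\kappa)}^-$, so by $R$-linearity of $\Theta_{k,Np,\d}$ and Theorem \ref{thm:theta-alg-ms} applied to $\mathbf f(\kappa) \in S_{2k}^{\mathrm{ord}}(Np,\chi)$ we obtain
\[
    \Theta_{k,Np,\d}(\Phi_{\mathbf f,\kappa}) = \Omega_{\kappa}\cdot\Theta_{k,Np,\chi,\d}(\varphi_{\mathbf f(\kappa)}^-) = \Omega_{\kappa}\cdot\theta_{k,Np,\chi,\d}^{\mathrm{alg}}(\mathbf f(\kappa)).
\]
Multiplying by $C(k,\chi,\d)^{-1}$ yields the stated formula, completing the proof.
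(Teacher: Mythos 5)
Your proposal follows essentially the same route as the paper's own proof: specialize coefficientwise via Lemma \ref{lemma:JQ} to identify $\tilde\kappa(s_N^{r_0}(\mathbf f;\d,\cdot))$ with $s_{k,Np,\chi}(\Phi_{\mathbf f,\kappa},\d;\cdot)$ (using that $\omega_{\d}$ kills the non-$p$-primitive forms since $p\mid\d$), check that $\tilde\kappa(t^{-2}\omega(t)^{r_0-1}[\langle t\rangle]^{-1})=t^{-k-1}$ via $k\equiv r_0\pmod{p-1}$, and then obtain the second equality from Theorems \ref{thm:Lambda-adic-MS} and \ref{thm:theta-alg-ms}. The only points you leave implicit are the trivial degenerate cases (the sign condition $\epsilon(-1)^{r_0}\d<0$, where both sides vanish, and the vanishing of $s_N^{r_0}$ when $\epsilon(-1)^km\not\equiv 0,1\pmod 4$), which the paper dispatches in a line each.
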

\begin{proof}
The proof follows from the above construction of the $\Lambda$-adic $\d$-th Shintani lifting. First suppose that $\epsilon(-1)^{r_0}\d < 0$. In this case, $\epsilon(-1)^k\d < 0$ as well for all points $\tilde{\kappa}$ as in the statement, and hence both sides of the stated equality are zero. We assume for the rest of the proof that $\epsilon(-1)^{r_0}\d > 0$. Then, if $\tilde{\kappa}$ is as in the statement we have $\chi_{\d}(-1) = \epsilon(-1)^k$. If $m\geq 1$ is an integer with $\epsilon(-1)^k m \not\equiv 0,1 \pmod 4$, then $s_N^{r_0}(\mathbf f; \d, \epsilon(-1)^k m) = 0$, since $|\d|m$ is not a discriminant. For the remaining positive integers, we can apply Lemma \ref{lemma:JQ} to find 
\begin{align*}
   \tilde{\kappa}(s_N^{r_0}(\mathbf f,\d; \chi_{\d}(-1) m)) & = \sum_{Q \in \mathcal L_{Np}^{[p]}(|\d|m)/\Gamma_0(Np)} \omega_{\d}(Q)\cdot \tilde{\kappa}(J^{r_0}(\mathbf f; Q)) = \\
   & = \sum_{Q \in \mathcal L_{Np}^{[p]}(|\d|m)/\Gamma_0(Np)} \omega_{\d}(Q)\cdot \langle \mathrm{sp}_{\kappa}(\Phi_{\mathbf f}(D_Q), Q^{k-1})\rangle = \\
   & = \sum_{Q \in \mathcal L_{Np}(|\d|m)/\Gamma_0(Np)} \omega_{\d}(Q)\cdot J_k(\Phi_{\mathbf f,\kappa},Q) = \\
   & =  s_{k,Np,\chi}(\Phi_{\mathbf f,\kappa},\d;\epsilon(-1)^k m).
\end{align*}
In the last line, the presence of $\omega_\d$ makes trivial the contribution of non-$p$-primitive forms (because $p \mid \d$). It then easily follows by the definition of $\Theta_{\d}^{r_0}$ that
\[
    \Theta_{\d}^{r_0}(\mathbf f)(\tilde{\kappa}) = C(k,\chi,\d)^{-1} \Theta_{k,Np,\chi,\d}(\Phi_{\mathbf f,\kappa}),
\]
observing that
\[
\tilde{\kappa}(t^{-2}\omega(t)^{r_0-1}[\langle t \rangle]^{-1}) = t^{-2}\omega(t)^{r_0-k}t^{-k+1} = t^{-k-1}.
\]
The second equality in the statement is now deduced using that $\Phi_{\mathbf f,\kappa} = \Omega_{\kappa} \cdot \varphi_{\mathbf f(\kappa)}^-$, by Theorem \ref{thm:Lambda-adic-MS}, and that $\Theta_{k,Np,\chi,\d}(\varphi_{\mathbf f(\kappa)}^-) = \theta_{k,Np,\chi,\d}^{\mathrm{alg}}(\mathbf f(\kappa))$, by Theorem \ref{thm:theta-alg-ms}.
\end{proof}

\begin{remark}
As we have pointed out above, it may happen that $\Theta_{\d}^{r_0}(\mathbf f)$ vanishes identically. However, suppose that there is at least one classical point $\tilde{\kappa} \in \widetilde{\mathcal U}_{\mathbf f}^{\mathrm{cl}}(r_0)$, say of weight $k-1$, such that the classical $\d$-th Shintani lifting $\theta_{k,Np,\d}(\mathbf f(\kappa))$ is non-zero, where $\kappa = \pi(\tilde{\kappa})$. This implies that $\epsilon(-1)^k\d > 0$, and hence also $\epsilon(-1)^{r_0}\d > 0$. Then, by virtue of Theorem \ref{thm:Lambda-adic-MS} we can choose the $p$-adic periods so that $\Omega_{\kappa} \neq 0$, and the interpolation property of the previous theorem ensures that $\Theta_{\d}^{r_0}(\mathbf f)$ does not vanish. Hence, as soon as the classical $\d$-th Shintani lifting is not vanishing on $\mathbf f(\kappa)$ for some $\kappa \in \pi(\widetilde{\mathcal U}_{\mathbf f}^{\mathrm{cl}}(r_0))$, one can construct a non-zero $\Lambda$-adic $\d$-th Shintani lifting of $\mathbf f$, $\Theta_{\d}^{r_0}(\mathbf f)$. When doing this choice, we can say that $\Theta_{\d}^{r_0}(\mathbf f)$ is `centered at $\kappa$'.
\end{remark}

\subsection{A $\Lambda$-adic Kohnen formula} \label{sec:LambdaKohnen} 

We keep the notation and assumptions as in the previous paragraph. Associated with a Hida family $(\mathcal R_{\mathbf f}, \mathcal U_{\mathbf f}, \mathcal U_{\mathbf f}^{\mathrm{cl}}, \mathbf f)$ and a fundamental discriminant $\d$ satisfying the assumptions of Theorem \ref{thm:LambdaShintani-interpolation}, let us rewrite \eqref{def:LambdaShintani} as
\[
    \Theta_{\d}^{r_0} (\mathbf f) = \sum_{m\geq 1} \mathbf a_m(\Theta_{\d}^{r_0}(\mathbf f)) q^m\in \widetilde{\mathcal R}_{\mathbf f}[[q]].
\] 
According to Theorem \ref{thm:LambdaShintani-interpolation}, the elements $\mathbf a_m(\Theta_{\d}^{r_0}(\mathbf f)) \in \widetilde{\mathcal R}_{\mathbf f}$ interpolate Fourier coefficients of the $\d$-th Shintani liftings of the classical forms $\mathbf f(\kappa)$ on the set of classical points $\pi(\widetilde{\mathcal U}_{\mathbf f}^{\mathrm{cl}}(r_0)) \subset \mathcal U_{\mathbf f}^{\mathrm{cl}}$. In particular, we may consider the function 
\[
    \mathbf a_{|\d|}(\Theta_{\d}^{r_0}(\mathbf f))\colon \mathcal X(\widetilde{\mathcal R}_{\mathbf f}) \, \longrightarrow \, \C_p, \quad \tilde{\kappa} \, \longmapsto \, \mathbf a_{|\d|}(\Theta_{\d}^{r_0}(\mathbf f))(\tilde{\kappa})
\]
given by the $|\d|$-th Fourier coefficient of $\Theta_{\d}^{r_0}(\mathbf f)$.

\begin{proposition}\label{prop:Lambda-adic-Kohnen}
With notation and assumptions as above, suppose that $\mathrm{gcd}(N,\d)=1$ and $\epsilon(-1)^{r_0}\d > 0$. If $\tilde{\kappa} \in \widetilde{\mathcal U}_{\mathbf f}^{\mathrm{cl}}(r_0)$ has weight $k-1$, $\kappa = \pi(\tilde{\kappa})$, and $\mathbf f(\kappa) \neq f_{\kappa}$, then
\begin{equation}\label{interpolationLp}
    \mathbf a_{|\d|}(\Theta_{\d}^{r_0}(\mathbf f))(\tilde{\kappa}) =  \Omega_{\kappa} \cdot
    \chi_{\d}(-1) R_{\d}(f_{\kappa})(-1)^k |\d|^k N_0^k (k-1)! \cdot \frac{L(f_{\kappa},\chi_{\d}\bar{\chi}_0,k)}{(2\pi i)^k\mathfrak g(\chi_{\d}\bar{\chi}_0)\Omega_{\mathbf f(\kappa)}^-},
\end{equation}
where  $R_{\d}(f_{\kappa})$ as defined in Proposition \ref{prop:rkN-Lvalue-KT}.
\end{proposition}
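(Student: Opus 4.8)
The plan is to evaluate the interpolation formula of Theorem \ref{thm:LambdaShintani-interpolation} at the given point $\tilde\kappa$ and then unwind the right-hand side using the explicit computations of Sections \ref{Sec:Shintani-Lifting} and \ref{Sec:p-stabilizations}. Concretely, Theorem \ref{thm:LambdaShintani-interpolation} yields
\[
\mathbf a_{|\d|}(\Theta_{\d}^{r_0}(\mathbf f))(\tilde\kappa) = \Omega_{\kappa}\, C(k,\chi,\d)^{-1}\, a_{|\d|}\bigl(\theta_{k,Np,\chi,\d}^{\mathrm{alg}}(\mathbf f(\kappa))\bigr),
\]
and by the definition \eqref{eqn:theta-alg} of the algebraic Shintani lifting this equals $\Omega_\kappa\, C(k,\chi,\d)^{-1}(\Omega_{\mathbf f(\kappa)}^-)^{-1} a_{|\d|}(\theta_{k,Np,\chi,\d}(\mathbf f(\kappa)))$. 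Since by hypothesis $\mathbf f(\kappa) \neq f_\kappa$, the form $\mathbf f(\kappa)$ is the genuine ordinary $p$-stabilization $f_\alpha = f_\kappa - \beta V_p f_\kappa$ of the newform $f_\kappa \in S_{2k}(N,\chi^2)$, so the task reduces to computing $a_{|\d|}(\theta_{k,Np,\chi,\d}(f_\alpha))$.

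For this I would invoke Corollary \ref{cor:lift-falpha} with auxiliary discriminant $\dd = \d$. Because $\d$ is a fundamental discriminant divisible by the odd prime $p$, one has $p \| \d$, hence $p^2 \nmid \d$ and we are in the second case of that corollary; moreover $\chi_\d(p) = 0$ since $p \mid \d$, which collapses the correction factor $1 - \beta\,\chi_\d\bar\chi(p)\,p^{-k}$ to $1$. Therefore $a_{|\d|}(\theta_{k,Np,\chi,\d}(f_\alpha)) = a_{|\d|}(\theta_{k,N,\chi,\d}(f_\kappa))$, reducing the whole computation to the level-$N$ newform $f_\kappa$.

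The level-$N$ coefficient is then supplied by Corollary \ref{cor:ad-Lvalue}, whose hypotheses are $\mathrm{gcd}(N,\d)=1$ (assumed) and $\epsilon(-1)^k\d > 0$. The latter follows from the hypothesis $\epsilon(-1)^{r_0}\d > 0$ together with $k \equiv r_0 \pmod{p-1}$ (since $\tilde\kappa \in \widetilde{\mathcal U}_{\mathbf f}^{\mathrm{cl}}(r_0)$), which forces $(-1)^k = (-1)^{r_0}$ as $p-1$ is even. Substituting
\[
a_{|\d|}(\theta_{k,N,\chi,\d}(f_\kappa)) = C(k,\chi,\d)\,\chi_\d(-1)R_\d(f_\kappa)(-1)^k|\d|^k N_0^k (k-1)!\,\frac{L(f_\kappa,\chi_\d\bar\chi_0,k)}{(2\pi i)^k\mathfrak g(\chi_\d\bar\chi_0)}
\]
into the chain above, the factors $C(k,\chi,\d)^{-1}$ and $C(k,\chi,\d)$ cancel, and one reads off precisely the asserted identity \eqref{interpolationLp}.

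Since every ingredient is already established, there is no genuine analytic or geometric obstacle here; the proof is a bookkeeping exercise. The points demanding care are the tracking of the two periods $\Omega_\kappa$ and $\Omega_{\mathbf f(\kappa)}^-$ and the clean cancellation of the constant $C(k,\chi,\d)$, together with verifying the two small facts that make Corollaries \ref{cor:lift-falpha} and \ref{cor:ad-Lvalue} apply verbatim: the vanishing $\chi_\d(p)=0$ (so that the $p$-stabilization introduces no extra Euler factor at the $|\d|$-th coefficient) and the parity identity $(-1)^k=(-1)^{r_0}$ (so that the required sign condition $\epsilon(-1)^k\d>0$ holds).
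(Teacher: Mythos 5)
Your proposal is correct and follows essentially the same route as the paper: evaluate Theorem \ref{thm:LambdaShintani-interpolation} at $\tilde\kappa$, remove the $p$-stabilization via Corollary \ref{cor:lift-falpha} (the Euler factor collapsing because $\chi_\d(p)=0$), and conclude with Corollary \ref{cor:ad-Lvalue}; the paper merely phrases the middle step through the level-$Np$ coefficient of $f_\kappa$ using Remark \ref{rmk:coeff-pstabilization} and Proposition \ref{prop:thetaN=thetaNp}, which is the same computation. Your explicit verification of the parity identity $(-1)^k=(-1)^{r_0}$ is a worthwhile detail the paper leaves implicit.
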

\begin{proof}
    For any $\tilde{\kappa} \in \widetilde{\mathcal U}_{\mathbf f}^{\mathrm{cl}}(r_0)$ of weight $k-1$, Theorem \ref{thm:LambdaShintani-interpolation} and Corollary \ref{cor:lift-falpha} (see also Remark \ref{rmk:coeff-pstabilization}) imply that
    \begin{align}
        \mathbf a_{|\d|}(\Theta_{\d}^{r_0}(\mathbf f))(\tilde{\kappa}) & =
        \Omega_{\kappa}\cdot \frac{ a_{|\d|}(\theta_{k,Np,\chi,\d}(\mathbf f(\kappa)))}{C(k,\chi,\d)\cdot \Omega_{\mathbf f(\kappa)}^-}
        = \Omega_{\kappa} (1-\beta\chi_{\d}\bar{\chi}_0(p)p^{-k}) \cdot \frac{a_{|\d|}(\theta_{k,Np,\chi,\d}(f_{\kappa}))}{C(k,\chi,\d)\cdot \Omega_{\mathbf f(\kappa)}^-} = \label{interpolation-ad} \\
        & = \Omega_{\kappa} \cdot \frac{a_{|\d|}(\theta_{k,Np,\chi,\d}(f_{\kappa}))}{C(k,\chi,\d)\cdot \Omega_{\mathbf f(\kappa)}^-}, \nonumber
    \end{align}
    where $\kappa = \pi(\tilde{\kappa})$, and the last equality uses that $p$ divides $\d$. If $\mathbf f(\kappa)\ne f_\kappa$, the result follows by combining \eqref{interpolation-ad} with Proposition \ref{prop:thetaN=thetaNp} and Corollary \ref{cor:ad-Lvalue}.
\end{proof}

\begin{remark}
    Notice that the condition $\mathbf f(\kappa) = f_\kappa$ can only happen when $k = 1$.  See the forthcoming Corollary \ref{cor:interpolationLpATone} for the value at those $\kappa$ (in the case that $\chi$ is trivial).
\end{remark}

Note that the values
\[
    L^{\mathrm{alg}}(f_{\kappa},\chi_{\d}\bar{\chi}_0,k) := \frac{L(f_{\kappa},\chi_{\d}\bar{\chi}_0,k)}{(2\pi i)^k\mathfrak g(\chi_{\d}\bar{\chi}_0)\Omega_{\mathbf f(\kappa)}^-}
\]
in \eqref{interpolationLp} are algebraic, hence the Fourier coefficient $\mathbf a_{|\d|}(\Theta_{\d}^{r_0}(\mathbf f))$ interpolates the `algebraic parts' of the special values $L(f_{\kappa},\chi_{\d}\bar{\chi}_0,k)$ at classical points $\tilde{\kappa} \in \widetilde{\mathcal U}_{\mathbf f}(r_0)$. A $p$-adic $L$-function interpolating such special values was studied in Greenberg--Stevens \cite{GS93}, generalizing Mazur--Tate--Teitelbaum \cite{MTT}. Associated with $\mathbf f$ and a Dirichlet character $\psi$, they define a two-variable $p$-adic $L$-function $\mathcal L_p^{\mathrm{GS}}(\mathbf f, \psi)$ on some local domain $\mathcal U_{\mathbf f} \times \mathcal U \subset\mathcal X(\mathcal R_{\mathbf f}) \times \mathcal X(\Lambda)$ satisfying the following interpolation property: for every pair of classical points $(\kappa, j)\in \mathcal U^{\mathrm{cl}}_{\mathbf f}\times \mathcal U^{\mathrm{cl}}$ in the cone defined by $0 < j < \mathrm{wt}(\kappa)+2$, 
\[
    \mathcal L_p^{\mathrm{GS}}(\mathbf f, \psi)(\kappa,j) = \Omega_{\kappa} \cdot \mathcal E_p(\mathbf f(\kappa), \psi,j) \cdot \frac{\mathrm c^{j-1}(j-1)!\mathfrak g(\psi\omega^{1-j})}{(2\pi i)^j\Omega_{\mathbf f(\kappa)}^{\mathrm{sgn}(\psi)}} L(\mathbf f(\kappa),\bar{\psi}\omega^{j-1},j).
\]
Here $c = \mathrm{cond}(\psi\omega^{1-j})$, $m$ is the exponent of $p$ in $\mathrm c$, $a_p(\kappa)$ is the $p$-th Fourier coefficient of $\mathbf f(\kappa)$, and the Euler-like factor:
\begin{equation} \label{eqn:padicmultiplier}
    \mathcal E_p(\mathbf f(\kappa), \psi,j) = a_p(\kappa)^{-m} \left(1-\frac{\psi \omega^{1-j}(p)p^{j-1}}{a_p(\kappa)}\right).
\end{equation}
When restricted to the line $(\kappa, k)$, where $k = (\mathrm{wt}(\kappa) +2)/2$, this identity becomes
\[
    \mathcal L_p^{\mathrm{GS}}(\mathbf f, \psi)(\kappa,k) = \Omega_{\kappa} \cdot \mathcal E_p(\mathbf f(\kappa), \psi,k) \cdot \frac{\mathrm c^{k-1}(k-1)!\mathfrak g(\psi\omega^{1-k})}{(2\pi i)^k\Omega_{\mathbf f(\kappa)}^{\mathrm{sgn}(\psi)}} L(\mathbf f(\kappa),\bar{\psi}\omega^{k-1},k).
\]
This suggests defining a one-variable $p$-adic $L$-function as the restriction of $\mathcal L_p^{\mathrm{GS}}(\mathbf f, \psi)$ to the `line' $(\kappa,k)$. More precisely, this one-variable $p$-adic $L$-function is the pullback of $\mathcal L_p^{\mathrm{GS}}(\mathbf f, \psi)$ along the map 
\[
    \mathcal U_{\mathbf f} \, \stackrel{\Delta}{\longrightarrow} \, \mathcal U_{\mathbf f} \times \mathcal U
\]
given by $\kappa \mapsto (\kappa, \mathrm{wt}(\kappa)/2+1)$ on $\mathcal U_{\mathbf f}^{\mathrm{cl}}$. For our purposes, we will rather consider the pullback of $\mathcal L_p^{\mathrm{GS}}(\mathbf f,\psi)$ along 
\[
    \iota\colon \widetilde{\mathcal U}_{\mathbf f} \, \stackrel{\pi}{\longrightarrow} \, \mathcal U_{\mathbf f} \, \stackrel{\Delta}{\longrightarrow} \, \mathcal U_{\mathbf f} \times \mathcal U,
\]
which yields a one-variable $p$-adic $L$-function $\tilde{\mathcal L}_p^{\mathrm{GS}}(\mathbf f,\psi)$ on $\widetilde{\mathcal U}_{\mathbf f}$. Indeed, by construction we see that if $\tilde{\kappa} \in \widetilde{\mathcal U}_{\mathbf f}^{\mathrm{cl}}(r_0)$ has weight $k-1$ and $\kappa = \pi(\tilde{\kappa})$, then 
\[
    \tilde{\mathcal L}_p^{\mathrm{GS}}(\mathbf f, \psi)(\tilde{\kappa}) =  \Omega_{\kappa} \cdot \mathcal E_p(\mathbf f(\kappa), \psi,k) \cdot \frac{\mathrm c^{k-1}(k-1)!\mathfrak g(\psi\omega^{1-k})}{(2\pi i)^k\Omega_{\mathbf f(\kappa)}^{\mathrm{sgn}(\psi)}} \cdot L(\mathbf f(\kappa),\bar{\psi}\omega^{k-1},k).
\]
Similarly, by pulling back via the map $\pi\colon \widetilde{\mathcal U}_{\mathbf f} \to \mathcal U_{\mathbf f}$ we can define $\Lambda$-adic functions $\mathbf R_\d$ and $\mathbf a_p$ on $\widetilde{\mathcal U}_{\mathbf f}$ such that
\[
    \mathbf R_\d(\tilde\kappa) = R_\d(f_\kappa) \qquad \text{ and } \qquad \mathbf a_p(\tilde\kappa) = a_p(\kappa)
\]
for all $\tilde{\kappa} \in \widetilde{\mathcal U}_{\mathbf f}^{\mathrm{cl}}$ of weight $k-1 > 1$, with $\kappa = \pi(\tilde{\kappa})$. 

The following theorem might be seen as a `$\Lambda$-adic Kohnen formula' in the spirit of the classical formula stated in Corollary \ref{cor:ad-Lvalue}.

\begin{theorem} \label{thm:comparisonGS}
With the above notation, suppose that $\mathrm{gcd}(N,\d)=1$ and $\epsilon(-1)^{r_0}\d > 0$. Then the equality 
\[
    \mathbf a_{|\d|}(\Theta_{\d}^{r_0}(\mathbf f)) = \chi_{\d}(-1) \cdot  \mathbf a_p \cdot \mathbf R_{\d} \cdot \tilde{\mathcal L}_p^{\mathrm{GS}}(\mathbf f, \chi_{\d}\chi_0\omega^{r_0-1})
\]
holds, as an equality of functions on $\widetilde{\mathcal U}_{\mathbf f}$.
\end{theorem}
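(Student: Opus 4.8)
The plan is to prove the identity by comparing both sides at a dense set of classical points and then invoking continuity. Both members are continuous (indeed $p$-adic analytic) functions on $\widetilde{\mathcal U}_{\mathbf f}$: the left-hand side is the $|\d|$-th coefficient of $\Theta_{\d}^{r_0}(\mathbf f)\in\widetilde{\mathcal R}_{\mathbf f}[[q]]$, and the right-hand side is a product of the analytic functions $\mathbf a_p$, $\mathbf R_{\d}$ and $\tilde{\mathcal L}_p^{\mathrm{GS}}(\mathbf f,\chi_{\d}\chi_0\omega^{r_0-1})$ set up in this section. Hence it suffices to check equality on the points $\tilde{\kappa}\in\widetilde{\mathcal U}_{\mathbf f}^{\mathrm{cl}}(r_0)$ of weight $k-1>1$, at which $\mathbf a_p$, $\mathbf R_{\d}$ and the interpolation formula of Proposition \ref{prop:Lambda-adic-Kohnen} are all available. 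Such points are dense in $\widetilde{\mathcal U}_{\mathbf f}$, since the integers $k\equiv r_0\pmod{p-1}$ are dense in $\Z_p$ (as $p-1\in\Z_p^{\times}$) and the corresponding classical weights accumulate everywhere in the weight disk, so discarding the finitely many small weights does not affect density.

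So fix $\tilde{\kappa}\in\widetilde{\mathcal U}_{\mathbf f}^{\mathrm{cl}}(r_0)$ of weight $k-1>1$ and set $\kappa=\pi(\tilde{\kappa})$, so that $\mathbf f(\kappa)\neq f_{\kappa}$. Evaluating the left-hand side through Proposition \ref{prop:Lambda-adic-Kohnen} gives the expression in \eqref{interpolationLp}, while on the right-hand side I substitute $\mathbf a_p(\tilde{\kappa})=a_p(\kappa)$, $\mathbf R_{\d}(\tilde{\kappa})=R_{\d}(f_{\kappa})$ and the interpolation formula for $\tilde{\mathcal L}_p^{\mathrm{GS}}$ at $\psi=\chi_{\d}\chi_0\omega^{r_0-1}$. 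After cancelling the common factors $\Omega_{\kappa}$, $\chi_{\d}(-1)$, $R_{\d}(f_{\kappa})$, $(k-1)!$, $(2\pi i)^{-k}$ and $(\Omega_{\mathbf f(\kappa)}^-)^{-1}$, the whole statement reduces to a purely elementary identity between the remaining constants. The first ingredient is that the character bookkeeping collapses: since $k\equiv r_0\pmod{p-1}$ and $\omega$ has order $p-1$, one finds $\bar\psi\omega^{k-1}=\chi_{\d}\bar{\chi}_0$ and $\psi\omega^{1-k}=\chi_{\d}\chi_0$, so the relevant twisted $L$-values agree, with $L(\mathbf f(\kappa),\chi_{\d}\bar{\chi}_0,k)=L(f_{\kappa},\chi_{\d}\bar{\chi}_0,k)$ because $\chi_{\d}(p)=0$ makes the $p$-Euler factor of this twist the same for $\mathbf f(\kappa)$ and for $f_{\kappa}$; moreover $\psi(-1)=-1$ (forced by $\epsilon(-1)^{r_0}\d>0$, which is exactly what matches the period $\Omega_{\mathbf f(\kappa)}^-$), and the conductor is $c=|\d|N_0$.

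The decisive ingredient is then the collapse of the Euler factor together with a Gauss-sum identity. Because $p\mid\d$ we have $\psi\omega^{1-k}(p)=\chi_{\d}\chi_0(p)=0$ and the exponent of $p$ in $c=|\d|N_0$ equals $1$, so $\mathcal E_p(\mathbf f(\kappa),\psi,k)=a_p(\kappa)^{-1}$ and the factor $\mathbf a_p(\tilde{\kappa})=a_p(\kappa)$ on the right precisely cancels it. What remains is to verify $(-1)^k|\d|N_0=\mathfrak g(\chi_{\d}\chi_0)\mathfrak g(\chi_{\d}\bar{\chi}_0)$; expanding each Gauss sum via the coprime-conductor multiplicativity rule recalled in Section \ref{sec:notation} and using $\mathfrak g(\chi_{\d})^2=\chi_{\d}(-1)|\d|=\d$ together with $\mathfrak g(\chi_0)\mathfrak g(\bar{\chi}_0)=\epsilon N_0$ yields $\mathfrak g(\chi_{\d}\chi_0)\mathfrak g(\chi_{\d}\bar{\chi}_0)=\epsilon\d N_0$, which equals $(-1)^k|\d|N_0$ exactly because $\epsilon(-1)^{r_0}\d>0$ and $k\equiv r_0\pmod 2$ give $\epsilon\,\mathrm{sgn}(\d)=(-1)^{r_0}=(-1)^k$. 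This closes the verification at $\tilde{\kappa}$, and density concludes the proof.

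I expect the main obstacle to be the constant bookkeeping in this last step—matching the signs, the powers of $|\d|$ and $N_0$, the two Gauss sums and the Euler factor—and in particular isolating the role of the hypothesis $\epsilon(-1)^{r_0}\d>0$, which is precisely what reconciles $\mathfrak g(\chi_{\d}\chi_0)\mathfrak g(\chi_{\d}\bar{\chi}_0)=\epsilon\d N_0$ with the factor $(-1)^k|\d|N_0$ produced on the left. Everything else (the density reduction, the Euler-factor collapse coming from $p\mid\d$, and the $\omega$-power simplifications coming from $k\equiv r_0$) is routine once this elementary identity has been singled out.
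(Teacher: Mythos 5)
Your proposal is correct and follows essentially the same route as the paper's proof: reduce by density to classical points $\tilde{\kappa}\in\widetilde{\mathcal U}_{\mathbf f}^{\mathrm{cl}}(r_0)$ of weight $k-1>1$, compare the interpolation formula \eqref{interpolationLp} with the specialization of $\tilde{\mathcal L}_p^{\mathrm{GS}}$ at $\psi=\chi_{\d}\chi_0\omega^{r_0-1}$, and match constants via the Euler-factor collapse forced by $p\mid\d$ and the Gauss-sum identity $\mathfrak g(\chi_{\d}\chi_0)\mathfrak g(\chi_{\d}\bar{\chi}_0)=(-1)^k|\d|N_0$. Your bookkeeping of the Euler-like factor (noting $m=1$, so $\mathcal E_p=a_p(\kappa)^{-1}$, cancelled by $\mathbf a_p$) is in fact stated more carefully than in the paper's prose, and your derivation of the Gauss-sum identity via coprime-conductor multiplicativity is an equivalent variant of the paper's one-line use of $\psi(-1)\mathfrak g(\psi)\mathfrak g(\bar{\psi})=c$.
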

\begin{proof}
It suffices to prove the claimed equality at classical points in $\widetilde{\mathcal U}_{\mathbf f}^{\mathrm{cl}}(r_0)$ of weight $k-1 > 1$, since they are dense in $\widetilde{\mathcal U}_{\mathbf f}$. Let $\tilde{\kappa} \in \widetilde{\mathcal U}_{\mathbf f}^{\mathrm{cl}}(r_0)$ be a classical point of weight $k-1$, with $k \equiv r_0 \pmod{p-1}$, and let $\kappa = \pi(\tilde{\kappa})$. We consider $\psi = \chi_{\d}\chi_0\omega^{r_0-1}$. Since $k \equiv r_0 \pmod{p-1}$, we have $\psi\omega^{1-k} = \chi_{\d}\chi_0$, which has conductor $|\d|N_0$. Therefore, since $\chi_{\d}(p) = 0$ the Euler-like factor $\mathcal E_p(\mathbf f(\kappa),\psi,k)$ is just $1$. The fact that $\chi_{\d}(p) = 0$ also implies that $L(\mathbf f(\kappa),\bar{\psi}\omega^{k-1},k)=L(\mathbf f(\kappa), \chi_{\d}\bar{\chi}_0,k)=L(f_{\kappa},\chi_{\d}\bar{\chi}_0,k)$. Besides, note that since $\omega$ is odd and we are assuming $\epsilon (-1)^{r_0}\d > 0$ we have $\mathrm{sgn}(\chi_{\d}\chi_0\omega^{r_0-1})=-1$. We thus obtain that 
\[
    \tilde{\mathcal L}_p^{\mathrm{GS}}(\mathbf f, \chi_{\d}\chi_0\omega^{r_0-1})(\tilde{\kappa}) = \Omega_{\kappa} a_p(\kappa)^{-1}\frac{ N_0^{k-1}|\d|^{k-1}(k-1)!\mathfrak g(\chi_{\d}\chi_0)}{(2\pi i)^k\Omega_{\mathbf f(\kappa)}^-} \cdot L(f_\kappa,\chi_{\d}\bar{\chi}_0,k).
\]
Using that
\[
    \mathfrak g(\chi_{\d}\chi_0) = \chi_{\d}\chi_0(-1)|\d|N_0/\mathfrak g(\chi_{\d}\bar{\chi}_0) = (-1)^k|\d|N_0/\mathfrak g(\chi_{\d}\bar{\chi}_0),
\]
we can rewrite the above identity as 
\begin{equation} \label{eqn:GSforus}
    \tilde{\mathcal L}_p^{\mathrm{GS}}(\mathbf f, \chi_{\d}\chi_0\omega^{r_0-1})(\tilde{\kappa}) = \Omega_{\kappa}a_p(\kappa)^{-1} (-1)^kN_0^k|\d|^k(k-1)! \cdot \frac{L(f_\kappa,\chi_{\d}\bar{\chi}_0,k)}{(2\pi i)^k\mathfrak g(\chi_{\d}\bar{\chi}_0)\Omega_{\mathbf f(\kappa)}^-}.
\end{equation}
Since $k-1>1$, we have $\mathbf f(\kappa) \neq f_{\kappa}$ and we can compare the above with equation \eqref{interpolationLp} to deduce that 
\[
    \mathbf a_{|\d|}(\Theta_{\d}^{r_0}(\mathbf f))(\tilde{\kappa}) = a_p(\kappa) \cdot (-1)^k\epsilon \cdot  R_{\d}(f_{\kappa}) \cdot \tilde{\mathcal L}_p^{\mathrm{GS}}(\mathbf f, \chi_{\d}\chi_0\omega^{r_0-1})(\tilde{\kappa}).
\]
To conclude, we may just use that $(-1)^k\epsilon = \chi_{\d}(-1)$ to get
\[
    \mathbf a_{|\d|}(\Theta_{\d}^{r_0}(\mathbf f))(\tilde{\kappa}) = \chi_{\d}(-1) \cdot a_p(\kappa)  \cdot   R_{\d}(f_{\kappa}) \cdot \tilde{\mathcal L}_p^{\mathrm{GS}}(\mathbf f, \chi_{\d}\chi_0\omega^{r_0-1})(\tilde{\kappa}).
\]
\end{proof}

\begin{corollary} \label{cor:interpolationLpATone}
Let $N\geq 1$ be a squarefree integer, $p$ be an odd prime with $p \nmid N$, $f \in S_2(Np)$ be a normalized newform, and $\d < 0$ be a fundamental discriminant divisible by $p$ such that $\mathrm{gcd}(N,\d) = 1$. Assume that $\chi_{\d}(\ell) = w_{\ell}$ for every prime $\ell$ dividing $N$. Let $\mathbf f$ be the Hida family passing through $f$, $\kappa \in \mathcal U_{\mathbf f}^{\mathrm{cl}}$ be such that $\mathbf f(\kappa) = f_{\kappa} = f$, and $\tilde{\kappa} \in \widetilde{\mathcal U}_{\mathbf f}^{\mathrm{cl}}(1)$ such that $\pi(\tilde{\kappa}) = \kappa$. Then 
\[
    \mathbf a_{|\d|}(\Theta_{\d}^1(\mathbf f))(\tilde{\kappa}) = \Omega_{\kappa}  \cdot 2^{\nu(N)} |\d| \cdot \frac{L(f,\chi_{\d},1)}{(2\pi i) \mathfrak g(\chi_{\d})\Omega_f^-}. 
\]
\end{corollary}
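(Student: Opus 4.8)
The plan is to read off the value directly from the $\Lambda$-adic Kohnen formula of Theorem~\ref{thm:comparisonGS}, which is an equality of analytic functions on all of $\widetilde{\mathcal U}_{\mathbf f}$ and may therefore be evaluated at the boundary point $\tilde\kappa$, even though it was established by checking it on the dense set of classical points of weight $k-1>1$. This detour through the $p$-adic $L$-function is forced upon us: since $\mathbf f(\kappa)=f$ is new of level $Np$ with $p\mid\d$, one cannot reach $a_{|\d|}(\theta_{1,Np,\d}(f))$ by combining Theorem~\ref{thm:LambdaShintani-interpolation} with the level-$N$ formula of Corollary~\ref{cor:ad-Lvalue}, whose hypothesis $\mathrm{gcd}(N,\d)=1$ fails at level $Np$. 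Because $\chi$ is trivial we have $\chi_0=1$, $N_0=1$, $N_1=N$, $\epsilon=1$, and $r_0=1$ makes the character in Theorem~\ref{thm:comparisonGS} equal to $\chi_{\d}\chi_0\omega^{r_0-1}=\chi_{\d}$; also $\d<0$ gives $\chi_{\d}(-1)=-1$, and $\tilde\kappa$ has weight $k-1=0$, i.e. $k=1$. Evaluating Theorem~\ref{thm:comparisonGS} at $\tilde\kappa$ gives
\[
    \mathbf a_{|\d|}(\Theta_{\d}^1(\mathbf f))(\tilde\kappa)=\chi_{\d}(-1)\cdot \mathbf a_p(\tilde\kappa)\cdot \mathbf R_{\d}(\tilde\kappa)\cdot \tilde{\mathcal L}_p^{\mathrm{GS}}(\mathbf f,\chi_{\d})(\tilde\kappa),
\]
and it remains to evaluate the three factors on the right.

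First I would compute $\mathbf R_{\d}(\tilde\kappa)$. For every classical point of weight $2k>2$ the newform $f_\kappa$ has level $N$, trivial character and real Fourier coefficients, so in Proposition~\ref{prop:rkN-Lvalue-KT} the quotient of the two local factors at each $\ell$ is $1$ and $R_{\d}(f_\kappa)$ collapses to $\prod_{\ell\mid N}\bigl(1+\chi_{\d}(\ell)w_{\ell}(f_\kappa)\bigr)$. Invoking the constancy of the Atkin--Lehner signs along a Hida family, $w_\ell(f_\kappa)=w_\ell$ for all $\kappa$, together with the hypothesis $\chi_{\d}(\ell)=w_\ell$, each factor equals $1+w_\ell^2=2$; hence $R_{\d}(f_\kappa)=2^{\nu(N)}$ is independent of $\kappa$. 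The interpolating function $\mathbf R_{\d}$ is then the constant $2^{\nu(N)}$, so $\mathbf R_{\d}(\tilde\kappa)=2^{\nu(N)}$.

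Next I would evaluate $\tilde{\mathcal L}_p^{\mathrm{GS}}(\mathbf f,\chi_{\d})(\tilde\kappa)$ by specializing the Greenberg--Stevens interpolation at $k=1$. The critical point $j=k=1$ lies in the admissible range $0<j<\mathrm{wt}(\kappa)+2=2$, so the formula applies. Since $p\mid\d$ we have $\chi_{\d}(p)=0$, whence $L(\mathbf f(\kappa),\chi_{\d},1)=L(f,\chi_{\d},1)$, the exponent of $p$ in $\mathrm{cond}(\chi_{\d})=|\d|$ is $m=1$ ($p$ odd, $\d$ fundamental), and the multiplier \eqref{eqn:padicmultiplier} reduces to $\mathcal E_p(\mathbf f(\kappa),\chi_{\d},1)=a_p(\kappa)^{-1}$. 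With $c^{k-1}=1$, $(k-1)!=1$ and $\mathrm{sgn}(\chi_{\d})=-1$, this yields
\[
    \tilde{\mathcal L}_p^{\mathrm{GS}}(\mathbf f,\chi_{\d})(\tilde\kappa)=\Omega_{\kappa}\,a_p(\kappa)^{-1}\,\frac{\mathfrak g(\chi_{\d})}{(2\pi i)\Omega_f^-}\,L(f,\chi_{\d},1).
\]

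Finally I would assemble the factors. The factor $\mathbf a_p(\tilde\kappa)=a_p(\kappa)$ cancels the $a_p(\kappa)^{-1}$ produced by $\mathcal E_p$, leaving $-2^{\nu(N)}\Omega_{\kappa}\,\mathfrak g(\chi_{\d})L(f,\chi_{\d},1)/((2\pi i)\Omega_f^-)$; replacing $\mathfrak g(\chi_{\d})$ by $-|\d|/\mathfrak g(\chi_{\d})$ via $\mathfrak g(\chi_{\d})^2=\chi_{\d}(-1)|\d|=-|\d|$ produces exactly the claimed value $\Omega_{\kappa}\cdot 2^{\nu(N)}|\d|\cdot L(f,\chi_{\d},1)/((2\pi i)\mathfrak g(\chi_{\d})\Omega_f^-)$. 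The main obstacle, and the reason this case is separated from Proposition~\ref{prop:Lambda-adic-Kohnen}, is precisely the behaviour at this extreme weight where $f$ is new at $p$: one must verify that the Greenberg--Stevens multiplier equals $a_p(\kappa)^{-1}$ (so that it cancels the explicit $\mathbf a_p$ factor, the mechanism that keeps the expression finite) rather than being trivial, and that $\mathbf R_{\d}$ extends across the boundary as the constant $2^{\nu(N)}$.
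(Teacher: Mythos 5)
Your proposal is correct and follows essentially the same route as the paper: the paper's proof is precisely to evaluate Theorem \ref{thm:comparisonGS} at $k=r_0=1$ and observe that $R_{\d}(f)=2^{\nu(N)}$ under the stated hypotheses. Your additional care about why the identity of Theorem \ref{thm:comparisonGS} extends to the weight-$0$ boundary point, the computation of the Greenberg--Stevens multiplier $\mathcal E_p=a_p(\kappa)^{-1}$, and the Gauss-sum manipulation $\mathfrak g(\chi_{\d})=-|\d|/\mathfrak g(\chi_{\d})$ just makes explicit what the paper leaves as "immediate."
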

\begin{proof}
This is an immedate consequence of the above theorem, taking $k = r_0 = 1$ and noticing that $R_{\d}(f) = 2^{\nu(N)}$ under the hypotheses in the statement.
\end{proof}

By using the interpolation property of the $\Lambda$-adic $\d$-th Shintani lifting, the above corollary can be rewritten in classical terms. This yields a mild generalization of Kohnen's formula in \eqref{ad-Lvalue-basic}, in which the level of the newform and the fundamental discriminant are not relatively prime:

\begin{corollary}\label{cor:KohnenFormula-Np}
Let $N\geq 1$ be a squarefree integer, $p$ be an odd prime with $p \nmid N$, $f \in S_2(Np)$ be a normalized newform, and $\d < 0$ be a fundamental discriminant divisible by $p$ such that $\mathrm{gcd}(N,\d) = 1$. Assume that $\chi_{\d}(\ell) = w_{\ell}$ for every prime $\ell$ dividing $N$. Then 
\[
a_{|\d|}(\theta_{1,Np,\d}(f)) = 2^{1+\nu(N)} |\d| \frac{L(f,\chi_{\d},1)}{(2\pi i) \mathfrak g(\chi_{\d})}.
\]
\end{corollary}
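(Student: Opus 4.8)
The plan is to derive this classical identity by inverting the interpolation property of $\Theta_{\d}^1(\mathbf f)$ against the already-established evaluation in Corollary \ref{cor:interpolationLpATone}. Since $N$, and hence $Np$, is squarefree, hypothesis \eqref{hypothesisk=1} holds and the weight-one lifting $\theta_{1,Np,\d}(f)$ is well-defined; moreover, because $f \in S_2(Np)$ is new of level $Np$, it coincides with its own ordinary $p$-stabilization, so we are exactly in the situation of Corollary \ref{cor:interpolationLpATone} with $\mathbf f(\kappa) = f_{\kappa} = f$.

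First I would fix the classical point $\tilde{\kappa} \in \widetilde{\mathcal U}_{\mathbf f}^{\mathrm{cl}}(1)$ of weight $0$ (so $k = r_0 = 1$) with $\kappa = \pi(\tilde\kappa)$, and evaluate Theorem \ref{thm:LambdaShintani-interpolation} there. Taking the $|\d|$-th Fourier coefficient of the identity $\Theta_{\d}^{1}(\mathbf f)(\tilde\kappa) = \Omega_{\kappa}\, C(1,\mathbf 1,\d)^{-1}\,\theta_{1,Np,\d}^{\mathrm{alg}}(\mathbf f(\kappa))$, and using the definition \eqref{eqn:theta-alg} of $\theta^{\mathrm{alg}}$ together with $\mathbf f(\kappa) = f$, yields
\[
    \mathbf a_{|\d|}(\Theta_{\d}^1(\mathbf f))(\tilde\kappa) = \frac{\Omega_{\kappa}}{C(1,\mathbf 1,\d)\,\Omega_f^-}\, a_{|\d|}(\theta_{1,Np,\d}(f)).
\]
For the trivial character the constant specializes to $C(1,\mathbf 1,\d) = (-1)^{[1/2]}2 = 2$, so the prefactor is simply $\Omega_{\kappa}/(2\Omega_f^-)$.

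Next I would compare this with the value supplied by Corollary \ref{cor:interpolationLpATone}, namely $\mathbf a_{|\d|}(\Theta_{\d}^1(\mathbf f))(\tilde\kappa) = \Omega_{\kappa}\cdot 2^{\nu(N)}|\d|\, L(f,\chi_{\d},1)/((2\pi i)\mathfrak g(\chi_{\d})\Omega_f^-)$. Equating the two expressions and cancelling the common factor $\Omega_{\kappa}/\Omega_f^-$ leaves $\frac{1}{2}\, a_{|\d|}(\theta_{1,Np,\d}(f)) = 2^{\nu(N)}|\d|\,L(f,\chi_{\d},1)/((2\pi i)\mathfrak g(\chi_{\d}))$, which rearranges at once into the claimed formula with the factor $2^{1+\nu(N)}$.

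The only point requiring care --- and the one I expect to be the main (if mild) obstacle --- is the legitimacy of cancelling $\Omega_{\kappa}$, since the argument is vacuous if $\Omega_{\kappa} = 0$. This is handled by invoking part (i) of Theorem \ref{thm:Lambda-adic-MS}: taking the base point $\kappa_0 = \kappa$ in the construction of the $\Lambda$-adic modular symbol $\Phi_{\mathbf f}$ forces $\Omega_{\kappa} \neq 0$, and since the same $\Phi_{\mathbf f}$, hence the same $p$-adic period $\Omega_{\kappa}$, enters both Corollary \ref{cor:interpolationLpATone} and the interpolation identity above, the cancellation is valid. The remainder is pure bookkeeping of the explicit constant $C(1,\mathbf 1,\d)$.
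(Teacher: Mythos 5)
Your argument is correct and follows essentially the same route as the paper: the paper's proof likewise combines Corollary \ref{cor:interpolationLpATone} with the first equality of \eqref{interpolation-ad} (which is exactly the specialization of Theorem \ref{thm:LambdaShintani-interpolation} you write down, with $C(1,\d)=2$) and the non-vanishing of $\Omega_{\kappa}$ guaranteed by Theorem \ref{thm:Lambda-adic-MS}. Your explicit attention to why the cancellation of $\Omega_{\kappa}$ is legitimate matches the paper's appeal to the same fact.
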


\begin{proof}
    It follows from Corollary \ref{cor:interpolationLpATone}, equation \eqref{interpolation-ad} (where $C(k,\d)=2$) and the fact that the family of $p$-adic periods can be chosen so that the relevant one is non-zero by Theorem \ref{thm:Lambda-adic-MS}.
\end{proof}

\begin{remark}
    This formula could have been obtained by adapting the classical computation of Proposition \ref{prop:rkN-Lvalue-KT}, choosing a suitable set of representatives for $\mathcal L_{Np}(N_0^2\d^2)/\Gamma_0(Np)$ for the computation of $r_{k,Np,\chi}(f;\d,\d)$ (for this, one needs to use arguments similar to those used in Section \ref{Sec:p-stabilizations} to classify integral binary quadratic forms). Instead, the previous corollary shows that such computation can be avoided if one disposes of a $\d$-th Shintani lifting in $p$-adic families. 
\end{remark}

\subsection{Classical points outside the interpolation region} \label{sec:Shadows} 

Continue to assume that $(\mathcal R_{\mathbf f},\mathcal U_{\mathbf f}, \mathcal U_{\mathbf f}^{\mathrm{cl}},\mathbf f)$ is a Hida family as usual, and let $\d$ be a fundamental discriminant as in Theorem \ref{thm:comparisonGS} (depending on a choice of $r_0$). We have seen in the previous paragraph how the $|\d|$-th Fourier coefficient of $\Theta_{\d}^{r_0}(\mathbf f)$ interpolates special values of twisted $L$-series associated with classical specializations of the Hida family $\mathbf f$ on `half' of the classical points. Namely, for classical points $\tilde{\kappa} \in \widetilde{\mathcal U}_{\mathbf f}^{\mathrm{cl}}(r_0)$ of weight $k-1$ the specializations $\mathbf a_{|\d|}(\Theta_{\d}^{r_0}(\mathbf f))(\tilde{\kappa})$ interpolate the special values $L(f_{\pi(\tilde{\kappa})},\chi_{\d}\bar{\chi}_0,k)$ (see Proposition \ref{prop:Lambda-adic-Kohnen}). This was just an immediate consequence of the fact that $\Theta_{\d}^{r_0}(\mathbf f)(\tilde{\kappa})$ interpolates the $\d$-th Shintani liftings of the forms $\mathbf f(\pi(\tilde{\kappa}))$ when varying $\tilde{\kappa} \in \widetilde{\mathcal U}_{\mathbf f}^{\mathrm{cl}}(r_0)$, together with the classical relation between Fourier coefficients of Shintani liftings and special $L$-values.

A natural question is: what about the values of $\mathbf a_{|\d|}(\Theta_{\d}^{r_0}(\mathbf f))$ at classical points $\tilde{\kappa} \in \widetilde{\mathcal U}_{\mathbf f}^{\mathrm{cl}} - \widetilde{\mathcal U}_{\mathbf f}^{\mathrm{cl}}(r_0)$? At those points, we cannot use Theorem \ref{thm:LambdaShintani-interpolation} to relate $\Theta_{\d}^{r_0}(\mathbf f)(\tilde{\kappa})$ to a classical $\d$-th Shintani lifting. However, we can still use Theorem \ref{thm:comparisonGS} to evaluate $\mathbf a_{|\d|}(\Theta_{\d}^{r_0}(\mathbf f))$ by evaluating $\tilde{\mathcal L}_p^{\mathrm{GS}}(\mathbf f,\chi_{\d}\chi_0\omega^{r_0-1})$. Indeed, first notice that if $\tilde{\kappa} \in \widetilde{\mathcal U}_{\mathbf f}^{\mathrm{cl}}$ does not belong to $\widetilde{\mathcal U}_{\mathbf f}^{\mathrm{cl}}(r_0)$, then $\tilde{\kappa}$ has weight $k-1$ for some integer $k$ such that $k \equiv r_0+(p-1)/2$ modulo $p-1$. Then the character 
\[
\chi_{\d}\chi_0\omega^{r_0-1}\omega^{1-k} = \chi_{\d}\omega^{(p-1)/2}\chi_0 = \chi_{\d/p^*}\chi_0
\]
has conductor $\mathrm c = |\d|N_0/p$, where $p^* = (-1)^{(p-1)/2}p$. In particular, $m = 0$ in the notation of the previous paragraph. If $\kappa = \pi(\tilde{\kappa})$, then 
\[
\mathcal E_p(\mathbf f(\kappa), \chi_{\d}\chi_0\omega^{r_0-1},k) = \left(1- \frac{\chi_{\d/p^*}\chi_0(p)p^{k-1}}{a_p(\kappa)}\right),
\]
and therefore we find
\begin{align*}
    \mathbf a_{|\d|}(\Theta_{\d}^{r_0}(\mathbf f))(\tilde{\kappa}) & = a_p(\kappa) \chi_{\d}(-1)\mathbf R_{\d}(\tilde{\kappa})\Omega_{\kappa}\left(1- \frac{\chi_{\d/p^*}\chi_0(p)p^{k-1}}{a_p(\kappa)}\right) \times \\
    & \hspace{3cm} \times \frac{\mathrm c^{k-1}(k-1)!\mathfrak g(\chi_{\d/p^*}\chi_0)}{(2\pi i)^k\Omega_{\mathbf f(\kappa)}^-} \cdot L(\mathbf f(\kappa),\chi_{\d/p^*}\bar{\chi}_0,k).
\end{align*}
Now, we have
\[
    \mathfrak g(\chi_{\d/p^*}\chi_0) = \frac{\chi_{\d/p^*}\chi_0(-1)|\d|N_0}{p\mathfrak g(\chi_{\d/p^*}\bar{\chi}_0)} = \frac{(-1)^{k}|\d|N_0}{p\mathfrak g(\chi_{\d/p^*}\bar{\chi}_0)},
\]
where in the last equality we use that $\chi_{\d/p^*}\chi_0(-1) = (-1)^{(p-1)/2}\chi_{\d}(-1)\epsilon = (-1)^{r_0+(p-1)/2} = (-1)^k$. Hence we can rewrite the above identity as 
\begin{align}\label{interpolationLp-2}
\mathbf a_{|\d|}(\Theta_{\d}^{r_0}(\mathbf f))(\tilde{\kappa}) & = \frac{a_p(\kappa)}{p^k}\left(1 - \frac{\chi_{\d/p^*}\chi_0(p)p^{k-1}}{a_p(\kappa)}\right) \cdot \mathbf R_{\d}(\tilde{\kappa}) \Omega_{\kappa} \cdot \chi_{\d}(-1)    (-1)^k|\d|^kN_0^k(k-1)! \, \, \times \\ 
& \hspace{2cm} \times \frac{L(\mathbf f(\kappa),\chi_{\d/p^*}\bar{\chi}_0,k)}{(2\pi i)^k\mathfrak g(\chi_{\d/p^*}\bar{\chi}_0)\Omega_{\mathbf f(\kappa)}^-}. \nonumber
\end{align}

\begin{remark}
This identity complements the interpolation formula for $\Theta_{\d}^{r_0}(\mathbf f)$ in \eqref{interpolationLp} at classical weights in $\widetilde{\mathcal U}_{\mathbf f}^{\mathrm{cl}} - \widetilde{\mathcal U}_{\mathbf f}^{\mathrm{cl}}(r_0)$. In turn, this identity also suggests that the specializations of $\Theta_{\d}^{r_0}(\mathbf f)$ at classical points in $\widetilde{\mathcal U}_{\mathbf f}^{\mathrm{cl}} - \widetilde{\mathcal U}_{\mathbf f}^{\mathrm{cl}}(r_0)$ would be `$p$-adic shadows' of the $\d$-th Shintani liftings $\theta_{k,Np,\chi\omega^{(p-1)/2},\d}(\mathbf f(\kappa))$. Note that in these $\d$-th Shintani liftings the conductor of the character $\chi\omega^{(p-1)/2}$ is not relatively prime with $\d$, and hence the discussion in Section \ref{Sec:Shintani-Lifting} should be reformulated in order to define such liftings.
\end{remark}

The identity in \eqref{interpolationLp-2} shows also that on the subset of classical points $\widetilde{\mathcal U}_{\mathbf f}^{\mathrm{cl}} - \widetilde{\mathcal U}_{\mathbf f}^{\mathrm{cl}}(r_0)$ one can find {\em exceptional zeroes} (precisely at those points where $\tilde{\mathcal L}_p^{\mathrm{GS}}(\mathbf f,\chi_{\d}\chi_0\omega^{r_0-1})$ has exceptional zeroes). Indeed, it is apparent from \eqref{interpolationLp-2} that for a classical point $\tilde{\kappa} \in \widetilde{\mathcal U}_{\mathbf f}^{\mathrm{cl}} - \widetilde{\mathcal U}_{\mathbf f}^{\mathrm{cl}}(r_0)$ of weight $k-1$, $\mathbf a_{|\d|}(\Theta_{\d}^{r_0}(\mathbf f))(\tilde{\kappa})$ vanishes whenever 
\[
    \chi_{\d/p^*}\chi_0(p)p^{k-1} = a_p(\kappa).
\]
When this holds, the order of vanishing of the $|\d|$-th Fourier coefficient $a_{|\d|}(\Theta_{\d}^{r_0}(\mathbf f))$ at $\tilde{\kappa}$ is at least one more than the order of vanishing of the relevant classical special $L$-value. This extra vanishing is due to the $p$-adic interpolation.

To illustrate one of the settings in which exceptional zeroes arise, suppose that $E/\Q$ is an elliptic curve of conductor $Np$ with multiplicative reduction at the prime $p$ (thus $a_p(E) = \pm 1$ according to whether $E$ has split or non-split multiplicative reduction at $p$, respectively). Let $f \in S_2(Np)$ be the normalized weight $2$ newform (with rational coefficients) associated with $E$ by modularity. Let also $\mathbf f$ be the Hida family passing through $f$ at a classical point $\kappa_0 \in \mathcal U_{\mathbf f}^{\mathrm{cl}}$ (thus $\mathbf f(\kappa_0) = f_{\kappa_0} = f$), and let $\tilde{\kappa}_0 \in \widetilde{\mathcal U}_{\mathbf f}^{\mathrm{cl}}(1)$ be such that $\pi(\tilde{\kappa}_0)=\kappa_0$.

Let $r_0$ be an integer congruent to $1+(p-1)/2 = (p+1)/2$ modulo $p-1$, and $\d$ be a fundamental discriminant divisible by $p$ such that $(-1)^{r_0}\d > 0$. Associated with this choice, consider the $\Lambda$-adic $\d$-th Shintani lifting $\Theta_{\d}^{r_0}(\mathbf f)$.  In order to guarantee the non triviality of the lift, we use Kohnen's formula and consider the assumption that $\chi_{\d}(q) = w_{q^e}(f_\kappa)$ for all primes $q$ such that $q^e \mid\mid N$, for some $\kappa\ne \kappa_0$ in a small enough neighborhood of $\kappa_0$. The quantity $\mathbf R_{\d}(\tilde{\kappa})$ then equals $2^{\nu(N)}$, where $\nu(N)$ is the number of primes dividing $N$, and equation \eqref{interpolationLp-2} reads 
\begin{align*}
    \mathbf a_{|\d|}(\Theta_{\d}^{r_0}(\mathbf f))(\tilde{\kappa}_0) & = (a_p(E)-\chi_{\d/p^*}(p))\cdot  2^{\nu(N)} \Omega_{\kappa_0}\cdot (-\d)N_0p^{-1}\cdot \frac{L(E,\chi_{\d/p^*},1)}{(2\pi i) \mathfrak g(\chi_{\d/p^*})\Omega_f^-},
\end{align*}
and we see that $\mathbf a_{|\d|}(\Theta_{\d}^{r_0}(\mathbf f))$ has an exceptional zero at $\tilde{\kappa}_0$ when $\chi_{\d/p^*}(p)=a_p(E)$.

In this setting, we can combine Theorem 5.4 of \cite{BD07} with the above Theorem \ref{thm:comparisonGS} and find the following:

\begin{corollary}\label{cor:BDderivative}
	With the notation as above, assume that $E$ has at least two distinct primes of semistable reduction and that $\ord_{s=1}L(E,\d/p^*,1) = 1$. Then there exists a global point $P\in (E(\Q[\sqrt{\d/p^*}])\otimes \Q)^{\chi_{\d/p*}}$ and a scalar $\lambda\in \Q^\times$ such that
	\[
		\mathbf a_{|\d|}(\Theta_{\d}^{r_0}(\mathbf f))''(\tilde{\kappa}_0) = \lambda \cdot \log^2(P),
	\]
	where the derivative is taken in the weight direction, in a small enough rigid analytic neighborhood of $\tilde\kappa_0$, and $\log = \log_E$ is the formal group logarithm on $E$. 
\end{corollary}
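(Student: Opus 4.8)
The plan is to deduce the statement from Theorem 5.4 of \cite{BD07} by transporting it through the factorization provided by Theorem \ref{thm:comparisonGS}. The starting observation is that $\tilde\kappa_0 \in \widetilde{\mathcal U}_{\mathbf f}^{\mathrm{cl}}(1)$ does \emph{not} lie in $\widetilde{\mathcal U}_{\mathbf f}^{\mathrm{cl}}(r_0)$, since $r_0 \equiv (p+1)/2 \pmod{p-1}$, so that the relevant evaluation is governed by \eqref{interpolationLp-2}. As recorded in the discussion preceding that formula, along the complementary component the character $\psi = \chi_{\d}\chi_0\omega^{r_0-1}$ interpolates the twist by $\chi_{\d/p^*}$, where $K = \Q(\sqrt{\d/p^*})$ is the quadratic field attached to the prime-discriminant factor $\d/p^*$ of $\d$. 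The first thing I would do is identify $\tilde{\mathcal L}_p^{\mathrm{GS}}(\mathbf f, \psi)$, defined as the pullback of the Greenberg--Stevens function along $\iota = \Delta\circ\pi$ and its restriction to the central line, with (a nonzero rational multiple of) the central-line restriction of the Mazur--Kitagawa/Greenberg--Stevens two-variable $p$-adic $L$-function attached to $\mathbf f$ twisted by $K$. This identification is the bridge on which the whole argument rests.

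Next I would exploit the factorization of Theorem \ref{thm:comparisonGS},
\[
    \mathbf a_{|\d|}(\Theta_{\d}^{r_0}(\mathbf f)) = \chi_{\d}(-1)\cdot \mathbf a_p \cdot \mathbf R_{\d}\cdot \tilde{\mathcal L}_p^{\mathrm{GS}}(\mathbf f, \chi_{\d}\chi_0\omega^{r_0-1}),
\]
noting that at $\tilde\kappa_0$ both scalar factors are nonzero: one has $\mathbf a_p(\tilde\kappa_0) = a_p(E) = \pm 1$, and under the hypothesis $\chi_{\d}(\ell) = w_\ell$ for all $\ell \mid N$ one has $\mathbf R_{\d}(\tilde\kappa_0) = 2^{\nu(N)}$. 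The exceptional-zero condition $\chi_{\d/p^*}(p) = a_p(E)$ forces the Euler-like factor $\mathcal E_p$ in \eqref{interpolationLp-2} to vanish at $\tilde\kappa_0$, so $\tilde{\mathcal L}_p^{\mathrm{GS}}(\mathbf f, \psi)(\tilde\kappa_0) = 0$; combined with the rank-one assumption $\ord_{s=1}L(E,\d/p^*,1) = 1$ and the parity dictated by the functional equation, the $p$-adic $L$-function vanishes to order at least two in the weight direction, so its first derivative at $\tilde\kappa_0$ vanishes as well. Applying the Leibniz rule to the product and discarding every term carrying either the value or the first derivative of $\tilde{\mathcal L}_p^{\mathrm{GS}}$, I obtain
\[
    \mathbf a_{|\d|}(\Theta_{\d}^{r_0}(\mathbf f))''(\tilde\kappa_0) = \chi_{\d}(-1)\,\mathbf a_p(\tilde\kappa_0)\,\mathbf R_{\d}(\tilde\kappa_0)\cdot \tilde{\mathcal L}_p^{\mathrm{GS}}(\mathbf f, \psi)''(\tilde\kappa_0),
\]
all derivatives taken in the weight direction on a small rigid-analytic neighbourhood of $\tilde\kappa_0$.

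Finally I would invoke Theorem 5.4 of \cite{BD07}: under the hypotheses that $E$ has at least two distinct primes of semistable reduction and that $L(E,\d/p^*,s)$ has a simple zero at $s=1$, the second weight-derivative of the central-line $p$-adic $L$-function equals $\log_E^2(P)$ times an explicit nonzero rational factor, for a global point $P \in (E(\Q[\sqrt{\d/p^*}])\otimes\Q)^{\chi_{\d/p^*}}$. Feeding the chain-rule constant coming from the weight-doubling map $\pi$ (a nonzero rational), together with the nonzero scalar $\chi_{\d}(-1)\,\mathbf a_p(\tilde\kappa_0)\,\mathbf R_{\d}(\tilde\kappa_0)$, into the displayed identity yields the claim, with $\lambda\in\Q^\times$ the product of all these factors.

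The main obstacle will be the precise matching of normalizations in the identification step: one must verify that $\tilde{\mathcal L}_p^{\mathrm{GS}}(\mathbf f, \psi)$, as built by pulling back along $\iota = \Delta\circ\pi$ and restricting to the central line, coincides up to an explicit nonzero constant with the object whose second derivative is computed in \cite{BD07}, and one must track how the weight-doubling $\pi$ transforms the derivative $d/dk$ through the chain rule. A secondary delicate point is justifying that the first weight-derivative of $\tilde{\mathcal L}_p^{\mathrm{GS}}(\mathbf f,\psi)$ genuinely vanishes at $\tilde\kappa_0$, which is exactly what makes the Leibniz expansion collapse to its leading term; this rests on the sign of the functional equation in the exceptional-zero regime. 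The substantive analytic content relating the second derivative to $\log_E^2(P)$ is entirely contained in \cite{BD07} and is invoked as a black box.
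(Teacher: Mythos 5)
Your proposal is correct and follows essentially the same route as the paper: factor $\mathbf a_{|\d|}(\Theta_{\d}^{r_0}(\mathbf f))$ via Theorem \ref{thm:comparisonGS}, use the order-two vanishing of $\tilde{\mathcal L}_p^{\mathrm{GS}}(\mathbf f,\chi_{\d}\chi_0\omega^{r_0-1})$ at $\tilde\kappa_0$ to collapse the Leibniz expansion to its leading term, and invoke \cite[Theorem 5.4]{BD07} for the second-derivative formula in terms of $\log_E^2(P)$. The only point where the paper is slightly more careful than you are is in verifying the sign hypothesis $\chi_{\d/p^*}(-Np)=w_{Np}(f)$ of \cite{BD07}: rather than assuming the exceptional-zero/parity conditions outright, it deduces $\chi_{\d/p^*}(-N)=-w_N(f_\kappa)$ from the non-vanishing of $L(f_\kappa,\chi_{\d/p^*},k)$ at a nearby classical point $\kappa\neq\kappa_0$ (forcing the global root number there to be $+1$) and then combines this with $\chi_{\d/p^*}(p)=-w_p(f)$.
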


\begin{proof}
	Under the assumptions, we know that there exists a small enough neighborhood of $\tilde\kappa_0$ such that $\tilde{\mathcal L}_p^{\mathrm{GS}}(\hf,\chi_\d\omega^{r_0-1})$ is nonzero. In particular, for a point $\tilde\kappa\in \widetilde{\mathcal U}_\hf^\mathrm{cl} \setminus \widetilde{\mathcal U}_\hf^\mathrm{cl} (r_0)$ of weight $k-1$ in such a neighborhood we have
	\[
		L(f_\kappa,\chi_{\d/p^*},k) \ne 0.
	\]
	If $\varepsilon(f_\kappa,\chi_{\d/p^*})$ denotes the global sign in the functional equation for this $L$-function, we have $\varepsilon(f_\kappa,\chi_{\d/p^*})= -w_N(f_\kappa)\chi_{\d/p^*}(-N)$. But the above implies that this sign is $+1$, hence we deduce that
	\[
		\chi_{\d/p^*}(-N) = - w_N(f_\kappa).
	\]
	Since $w_N(f_\kappa)$ is constant in the Hida family (in the neighborhood chosen as above) and since we have $\chi_{\d/p^*}(p) = -w_p(f)$, it follows that $\chi_{\d/p^*}(-Np) = w_{Np}(f)$`, and therefore the hypotheses of \cite[Theorem 5.4]{BD07} are satisfied. In particular, part (1) and (3) of loc. cit. implies that $\tilde{\mathcal L}_p^{\mathrm{GS}}(\hf,\chi_\d\omega^{r_0-1})$ has a zero of order 2 at $\tilde\kappa_0$, and from Theorem 5.13 we deduce that
	\[
		\mathbf a_{|\d|}(\Theta_{\d}^{r_0}(\mathbf f))'' = \chi_{\d}(-1) \cdot  \mathbf a_p \cdot 2^{\nu(N)} \cdot \tilde{\mathcal L}_p^{\mathrm{GS}}(\mathbf f, \chi_{\d}\chi_0\omega^{r_0-1})'' + \tilde{\mathcal L}_p^{\mathrm{GS}}(\mathbf f, \chi_{\d}\chi_0\omega^{r_0-1}) \cdot \bigl( \cdots \bigr),
	\]
	which yields
	\[
		\mathbf a_{|\d|}(\Theta_{\d}^{r_0}(\mathbf f))''(\tilde\kappa_0) = - 2^{\nu(N)} \cdot \tilde{\mathcal L}_p^{\mathrm{GS}}(\mathbf f, \chi_{\d}\chi_0\omega^{r_0-1})'' (\tilde\kappa_0).
	\]
	By part (2) of \cite[Theorem 5.4]{BD07}, there exists a point $P\in (E(\Q[\sqrt{\d/p^*}])\otimes \Q)^{\chi_{\d/p*}}$ and a nonzero rational number $\ell$ such that
	\[
		\tilde{\mathcal L}_p^{\mathrm{GS}}(\mathbf f, \chi_{\d}\chi_0\omega^{r_0-1})'' (\tilde\kappa_0) = \ell \cdot \log^2(P),
	\]
	and the result follows since we can chose $\Omega_{\kappa_0}=1$.
\end{proof}

A similar type of result for the first derivative of Fourier coefficients associated with half-integral weight modular forms was obtained in \cite{DarmonTornaria}. Building on the constructions of \cite{BD09}, \cite{BD07} and the results of \cite{Stevens}, the authors found a formula that we might read as
\[
	\mathbf a_{|D|}(\Theta_{\d}^{r_0}(\mathbf f))'(\tilde{\kappa}_0) = \lambda_D \cdot \log(P_D),
\]
for a fundamental discriminant $D$ satisfying appropriate hypotheses and for a specific normalization of the theta lift. It would be interesting to explore the connections with the above corollary.


\end{document}